\documentclass[11pt,letterpaper]{article}
\usepackage[T1]{fontenc}
\usepackage[utf8]{inputenc}
\usepackage{lmodern}
\usepackage[margin=1in]{geometry}
\usepackage{amsmath,amssymb,amsthm,mathtools}
\usepackage{mathrsfs}
\usepackage{microtype}
\usepackage{enumitem}
\usepackage[hidelinks]{hyperref}
\usepackage{bookmark}
\numberwithin{equation}{section}
\newtheorem{theorem}{Theorem}[section]
\newtheorem{lemma}[theorem]{Lemma}

\theoremstyle{definition}

\theoremstyle{remark}

\newcommand{\R}{\mathbb R}
\newcommand{\Sn}{S^{n-1}}
\newcommand{\E}{\mathbb E}
\newcommand{\cK}{\mathcal K^n_0}
\newcommand{\cH}{\mathcal H}
\newcommand{\cP}{\mathcal P}
\newcommand{\cD}{\mathcal D}
\newcommand{\vbar}{\overline V}
\newcommand{\ind}{\mathbf 1}
\newcommand{\HS}{\mathrm{HS}}
\newcommand{\loc}{\mathrm{loc}}
\DeclareMathOperator{\Var}{Var}
\DeclareMathOperator{\Cov}{Cov}
\DeclareMathOperator{\tr}{tr}
\DeclareMathOperator{\supp}{supp}
\DeclareMathOperator{\Span}{span}
\DeclareMathOperator{\Sym}{Sym}
\DeclareMathOperator{\diver}{div}

\DeclareMathOperator{\argmin}{argmin}
\DeclareMathOperator{\intt}{int}
\setlist[enumerate]{label=\textup{(\roman*)},leftmargin=2em,itemsep=2pt,topsep=4pt}
\allowdisplaybreaks[2]
\hypersetup{pdftitle={The centered logarithmic Brunn--Minkowski inequality},pdfauthor={Yude Liu, Qiang Sun, Ge Xiong},pdfsubject={Centered logarithmic Brunn--Minkowski and Minkowski inequalities; cone-volume and Lp surface area measures}}

\title{The centered logarithmic Brunn--Minkowski inequality}
\author{Yude Liu\thanks{School of Mathematical Sciences, Fudan University, Shanghai 200433, China.\newline Email: \texttt{liuyude@fudan.edu.cn} (Y.~Liu).}
\and Qiang Sun\thanks{Shanghai Center for Mathematical Sciences, Fudan University, Shanghai 200438, China.\newline Email: \texttt{sunqiang@fudan.edu.cn} (Q.~Sun).}
\and Ge Xiong\thanks{School of Mathematical Sciences, Tongji University, Shanghai 200092, China.\newline Email: \texttt{xiongge@tongji.edu.cn} (G.~Xiong).}}
\date{}
\begin{document}
\maketitle
\begin{abstract}
The logarithmic Brunn--Minkowski and logarithmic Minkowski inequalities are proved for convex bodies whose centroids are at the origin, with equality precisely for independent positive dilations of common direct summands. This characterizes when centered convex bodies have the same cone-volume measure. The \(L_p\) Brunn--Minkowski and Minkowski inequalities are also obtained for every \(p>0\), with equality only for positive dilates, and the corresponding uniqueness of centered \(L_p\) surface area measures is established.
\end{abstract}

\section{Introduction}\label{sec:introduction}\

A convex body in $\R^n$ is a compact convex set with nonempty interior. Write $|K|$ for its volume. We call $K$ \emph{centered} if its centroid
\[
 g(K)=\frac1{|K|}\int_K x\,dx
\]
is at the origin.  The support function of $K$ is
\[
 h_K(u)=\max_{x\in K}\langle x,u\rangle,\qquad u\in\Sn.
\]
For a positive continuous function $h$ on $\Sn$, its Wulff body is
\[
 W[h]=\bigcap_{u\in\Sn}\{x\in\R^n:\langle x,u\rangle\le h(u)\}.
\]

The first result is the centered logarithmic Brunn--Minkowski inequality, including the equality cases formulated in \cite[Conjecture 3.4]{BoroczkySurvey}.

\begin{theorem}\label{thm:logbm}
Let $n\ge2$ and $K,L\subset\R^n$ be centered convex bodies. Then for each $t\in(0,1)$,
\begin{equation}\label{eq:main-bm}
 \big|W[h_K^{1-t}h_L^t]\big|\ge |K|^{1-t}|L|^t
\end{equation}
with equality if and only if there are nonzero complementary linear subspaces $E_1,\ldots,E_r$, centered convex bodies $K_i\subset E_i$ relative to these subspaces, and positive numbers $a_i$ such that
\begin{equation}\label{eq:main-split}
 \R^n=E_1\oplus\cdots\oplus E_r,\qquad
 K=K_1\oplus\cdots\oplus K_r,\qquad
 L=a_1K_1\oplus\cdots\oplus a_rK_r.
\end{equation}
\end{theorem}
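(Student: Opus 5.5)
We will derive the Brunn--Minkowski form \eqref{eq:main-bm} from the equivalent logarithmic Minkowski inequality. The target inequality is
\begin{equation*}
 \int_{\Sn}\log\frac{h_L}{h_K}\,dV_K\ \ge\ \frac{|K|}{n}\log\frac{|L|}{|K|},
\end{equation*}
where $dV_K=\frac1n h_K\,dS_K$ is the cone-volume measure. It should hold for centered $K$ and arbitrary $L$ containing the origin in its interior; it is enough that $K$ is centered. The reduction is the standard one. Set $Q_t=W[h_K^{1-t}h_L^t]$ and note $h_{Q_t}\le h_K^{1-t}h_L^t$. If the centroid of $Q_t$ is not at the origin, we cannot apply the Minkowski form at $Q_t$ directly. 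Instead we apply it with $K$ (resp.\ $L$) as the centered body and $Q_t$ as the comparison body, which gives
\begin{equation*}
 \frac{n}{|K|}\int\log\frac{h_{Q_t}}{h_K}\,dV_K\ge\log\frac{|Q_t|}{|K|},
\end{equation*}
and the analogous bound for $L$. This runs the wrong direction, so instead we use the concavity route. We show that $t\mapsto\log|Q_t|$ is concave along the geometric path whenever the relevant endpoint is centered, via the first variation $\frac{d}{dt}\log|Q_t|\big|_{t=0}=\frac{n}{|K|}\int\log\frac{h_L}{h_K}\,dV_K$. The key extra step is an affine or translation reduction: since $|Q_t|$ does not increase under translating the data, we may compare with the centered body at each $t$.

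For the Minkowski inequality itself we plan to use the local-to-global scheme of Kolesnikov--Milman and van Handel. Locally, at a centered $K$ with smooth positive curvature, the second variation of $\log|K+\epsilon f|$ in the cone-volume metric reduces to the spectral estimate
\begin{equation*}
 \int (f-\bar f)^2\,dV_K\ \ge\ \frac{1}{n}\cdot(\text{Hilbert--Brunn--Minkowski quadratic form}),
\end{equation*}
that is, the second Hilbert--Brunn--Minkowski eigenvalue bound $\lambda_2(-L_K)\ge n$ restricted to the complement of linear functions. Centering is used exactly here. Linear functions $\langle x,\cdot\rangle$ are the eigenfunctions with eigenvalue $n-1$, and centering makes $\int\langle x,u\rangle\,dV_K=0$, so the quadratic form only needs to be controlled on the orthogonal complement of constants and linear functions. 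We would prove the local estimate by a Bochner/Reilly-type identity on $\Sn$ together with a variance (Brascamp--Lieb) bound for the centroid-adjusted function. Globalization proceeds by continuity along the path from the centered body to a ball, using that the set of centered bodies is path-connected and that the local inequality is strict off the trivial directions.

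For the equality case, equality in \eqref{eq:main-bm} forces equality in the first variation along the whole path. This means $\log(h_L/h_K)$ is, $V_K$-a.e., an eigenfunction in the equality space of the local inequality. We then characterize that space: functions $f$ with $\mathrm{Hess} f+fI$ proportional to $D^2h_K$ blockwise. This yields an orthogonal decomposition into invariant subspaces $E_i$ on which $f$ is constant, giving the splitting \eqref{eq:main-split}. Conversely, direct sums with dilations give equality by a Fubini computation.

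The main obstacle is the local spectral gap for general (nonsymmetric) centered bodies, and in particular the characterization of its equality cases without smoothness. We would handle the nonsmooth case by approximating with smooth centered bodies, which requires a stability version of the local inequality.
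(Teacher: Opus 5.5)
Your proposal has a genuine gap at its first step, and the inequality you set out to prove is false as you state it. The logarithmic Minkowski inequality does \emph{not} hold when only $K$ is centered. Take $L=K+y$ with $y\ne0$ small; then $|L|=|K|$, and $\log(1+s)<s$ for $s\ne0$ gives $\int_{\Sn}\log\frac{h_L}{h_K}\,dV_K<\frac1n\int_{\Sn}\langle y,u\rangle\,dS_K(u)=0=\frac{|K|}{n}\log\frac{|L|}{|K|}$. The same computation, with base body $K+y$ and comparison body $K$, shows the inequality also fails for noncentered base bodies.

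The standard passage from Minkowski to \eqref{eq:main-bm} applies the inequality with the intermediate body $Q_t=W[h_K^{1-t}h_L^t]$ as the base; that is where $h_{Q_t}=h_K^{1-t}h_L^t$ on $\supp S_{Q_t}$ is used. But $Q_t$ is generally not centered. Your translation repair does not fix this. Replacing $K,L$ by $K-y,L-y$ and translating back turns the constraint $G=h_K^{1-t}h_L^t$ into $(h_K-s)^{1-t}(h_L-s)^t+s$, with $s=\langle y,u\rangle$. By superadditivity of the geometric mean, this lies below $G$ where $s>0$ and above it where $s<0$, so no inclusion or volume monotonicity follows. Nor can a first variation at $t=0$ give concavity of $t\mapsto\log|Q_t|$; that would need second-order information at the noncentered bodies $Q_t$, which the paper deliberately never asserts.

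The missing idea is the tilt-optimized functional $\Psi(C)=\min_\xi\int_Ce^{-\langle\xi,x\rangle}\,dx$. It satisfies $\Psi(C)\le|C|$, with equality iff $g(C)=0$. The paper proves that $\cH(a)=\log\Psi(W[e^a])$ is concave on all of $C(\Sn)$. Hence $\log|Q_t|\ge\cH(a_t)\ge(1-t)\log|K|+t\log|L|$ with no centering of $Q_t$ needed. It then derives the Minkowski inequality from this, not conversely.

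The analytic core is also missing. The local $p=0$ spectral inequality for arbitrary centered bodies is the heart of the conjecture, and you give no mechanism for it. The available spectral arguments establish it only in special regimes: near the ball, for zonoids, or for $p$ near $1$ rather than $p=0$.

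Your account of where centering enters is incorrect. $\int_{\Sn}u\,dV_K(u)$ need not vanish for centered $K$: a centered triangle puts cone-volume mass $|K|/3$ on three normals, which sum to zero only for equilateral triangles. Meanwhile the translation modes $\langle\theta,u\rangle/h_K$ are orthogonal to constants in $L^2(V_K)$ for \emph{every} $K$. In the paper, centering enters through the exponential tilt. The optimal $\xi$ centers the regularized measure, which is what makes the moment map exist. Optimizing in $\xi$ produces a Schur complement, namely the projected variance bound $\Var_\mu F-\E[xF]^T\Sigma^{-1}\E[xF]\le\sum_ib_i^2\E\psi_i$. This bound is proved via density of the double-divergence test range in the complement of affine functions and an exact sum-of-squares tensor identity.

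Your globalization is circular. Continuity along a path needs an a priori reason that the spectral threshold is never reached, and that is exactly the strict inequality you are trying to prove. Moreover, "strict off the trivial directions" is false at direct sums such as cubes and cylinders: there, independent dilations of the factors are equality directions, which are precisely the theorem's equality cases. Local-to-global principles also need a class closed under Wulff interpolation, and the centered class is not.

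Finally, equality at one $t_0$ propagates along the path only through a concave functional, again $\cH$. A smooth local equality analysis plus an unspecified stability step cannot reach the nonsmooth equality bodies. The paper instead retains the $\varepsilon\E|W|^2$ and derivative remainders along an affine segment of $\cH$. This yields a limit matrix $A$ commuting with the Hessian of the moment potential of the tilted measure on $C$, which splits $C$. The dilation factors come from the flux identity $Ax\cdot\nu_C=-f(\nu_C)h_C(\nu_C)$, transferred to $K,L$ via the common cone measure and Lemma~\ref{lem:reconstruct}.
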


Here the direct sum of convex sets is their Minkowski sum in complementary subspaces. The subspaces need not be orthogonal, and the factors need not be origin-symmetric. The case $r=1$ gives positive dilates. In dimension one every centered interval is origin-symmetric and the corresponding inequality is an equality.

The first variation of volume leads to the logarithmic Minkowski inequality. Let $S_K$ be the surface area measure of $K$. Its cone-volume measure and its probability normalization are
\begin{equation*}
 dV_K=\frac1n h_K\,dS_K,\qquad V_K(\Sn)=|K|,\qquad
 \vbar_K=\frac{V_K}{|K|}.
\end{equation*}

\begin{theorem}\label{thm:logm}
Let $n\ge2$ and let $K,L\subset\R^n$ be centered convex bodies. Then
\begin{equation}\label{eq:main-m}
 \int_{\Sn}\log\frac{h_L}{h_K}\,dV_K
 \ge \frac{|K|}{n}\log\frac{|L|}{|K|}
\end{equation}
with equality if and only if $K,L$ admit the common direct-sum representation \eqref{eq:main-split}.
\end{theorem}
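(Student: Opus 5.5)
The plan is to derive Theorem~\ref{thm:logm} from Theorem~\ref{thm:logbm} by differentiating at $t=0$, using the standard variational formula for Wulff bodies. Put $h_t=h_K^{1-t}h_L^t$ and $K_t=W[h_t]$, so that $K_0=K$. By the Aleksandrov variational lemma, applied to $\log h_t=\log h_K+t\log(h_L/h_K)$ with $\log(h_L/h_K)$ continuous on $\Sn$,
\[
 \frac{d}{dt}\Big|_{t=0^+}|K_t|=\int_{\Sn}\log\frac{h_L}{h_K}\,h_K\,dS_K=n\int_{\Sn}\log\frac{h_L}{h_K}\,dV_K .
\]
A priori $h_K,h_L$ need not be positive. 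Since $K$ and $L$ are centered, the origin lies in their interiors, so both support functions are positive and continuous and the lemma applies. Theorem~\ref{thm:logbm} gives $\phi(t):=\log|K_t|-(1-t)\log|K|-t\log|L|\ge0$ on $(0,1)$ with $\phi(0)=0$. Hence $\phi'(0^+)\ge0$, which reads
\[
 \frac{1}{|K|}\,n\int\log\frac{h_L}{h_K}\,dV_K\ge\log\frac{|L|}{|K|},
\]
and this is exactly \eqref{eq:main-m}.

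The "if" direction of the equality case is a direct computation. Suppose $K=\bigoplus K_i$ and $L=\bigoplus a_iK_i$. Cone-volume measure splits along the direct sum: the measure $V_K$ is concentrated on normals $u$ whose supporting face is a facet of the form $F\oplus\bigoplus_{j\ne i}K_j$, with $F$ a facet of $K_i$. On such normals $h_L/h_K=a_i$, and the total $V_K$-mass of these normals is $\frac{\dim E_i}{n}|K|$. Indeed the cone-volume decomposition of a direct sum gives mass proportional to $\dim E_i$; this is checked via $\int x\,dx$ or via scaling $K_i\mapsto sK_i$. So the left side of \eqref{eq:main-m} equals $\frac{|K|}{n}\sum_i\dim E_i\log a_i$, while $|L|=\prod a_i^{\dim E_i}|K|$. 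Equality follows.

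The main obstacle is the converse: showing that equality in \eqref{eq:main-m} forces the splitting. Equality only gives $\phi'(0^+)=0$, and this does not by itself imply $\phi\equiv0$. The route I would take is a convexity argument. Apply Theorem~\ref{thm:logbm} to the pair $(K_s,L)$ for small $s$. This requires $K_s$ to be centered, which fails in general, so instead I would use a one-sided concavity. The function $t\mapsto\log|W[h_K^{1-t}h_L^t]|$ should be concave on $[0,1]$ in the relevant sense, because $W[h_{t}^{1-\lambda}h_{t'}^{\lambda}]\supseteq W[h_{(1-\lambda)t+\lambda t'}]$ holds with equality of support data. If the intermediate bodies are not centered, I would instead re-center them by translations. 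Granting concavity, $\phi$ is concave with $\phi(0)=\phi(1)=0$ and $\phi\ge0$, so $\phi'(0^+)=0$ forces $\phi\equiv0$ on $[0,1]$. Then equality holds in \eqref{eq:main-bm} at some $t\in(0,1)$, and the equality characterization of Theorem~\ref{thm:logbm} yields \eqref{eq:main-split}.

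Concavity of $\log|W[h_K^{1-t}h_L^t]|$ is not free: the centered hypothesis is what was needed for \eqref{eq:main-bm} itself. If it cannot be established, the fallback is to rerun the equality analysis from the proof of Theorem~\ref{thm:logbm} at the infinitesimal level. The aim there would be to show that $\log(h_L/h_K)$ is $V_K$-a.e. equal to a function realizing the direct-sum structure, presumably through the same second-variation or spectral argument on $L^2(V_K)$ used for the logarithmic Brunn--Minkowski inequality.
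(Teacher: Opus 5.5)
Your derivation of \eqref{eq:main-m} from Theorem~\ref{thm:logbm}, by differentiating at $t=0$ with Aleksandrov's formula, is the paper's argument. Your sufficiency computation is also correct: $V_K$ splits over the normal subspaces with masses $\frac{\dim E_i}{n}|K|$, and $h_L/h_K=a_i$ there, which is Lemma~\ref{lem:product}.

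The gap is in necessity. Concavity of $t\mapsto\log|W[h_K^{1-t}h_L^t]|$ would follow from applying \eqref{eq:main-bm} to pairs of intermediate bodies $C_s=W[h_K^{1-s}h_L^s]$; monotonicity of the Wulff construction handles $h_{C_s}\le h_K^{1-s}h_L^s$. But these bodies are in general not centered. Without centering the inequality is not available, and it is false in general: take $[-a,b]\times Q$ versus $[-c,d]\times Q$ with $ad\ne bc$, and apply H\"older. Re-centering by translations does not repair this. Since $h_{C+x}=h_C+\langle x,\cdot\rangle$, a logarithmic Wulff combination of translates is not a translate of the original combination, so the translation modes cannot simply be discarded. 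Nothing in your proposal establishes concavity of ordinary log-volume along the segment, and the paper never claims it. Your ``infinitesimal fallback'' is not yet an argument.

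The missing idea is to replace $\log|\cdot|$ by the tilt-optimized functional $\cH(a)=\cP(W[e^a])$, where $\cP(C)=\log\min_{\xi}\int_C e^{-\langle\xi,x\rangle}\,dx$. It has three properties that make the argument work:
\begin{enumerate}
\item Its concavity on all of $C(\Sn)$, \eqref{eq:H-concavity}, is proved directly in Section~\ref{subsec:concavity}. It is stronger than Theorem~\ref{thm:logbm}, not a consequence of it.
\item $\cP\le\log|\cdot|$, with equality exactly for centered bodies, by \eqref{eq:tilt-volume}.
\item By Lemma~\ref{lem:H-variation}, its derivative at a centered $K$ in the direction $f=\log(h_L/h_K)$ is $\frac n{|K|}\int f\,dV_K$, because $\omega_K=\vbar_K$.
\end{enumerate}
Set $\Phi(t)=\cH(a_t)$. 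Equality in \eqref{eq:main-m} says the tangent line of $\Phi$ at $0$ passes through $\Phi(1)$, so the concave function $\Phi$ is affine on $[0,1]$.

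Even at this point you cannot conclude equality in \eqref{eq:main-bm} and quote the equality statement of Theorem~\ref{thm:logbm}. The inequality $\log|C_t|\ge\cH(a_t)$ may be strict when $C_t$ is not centered. The paper instead uses a stronger implication established inside the equality proof in Section~\ref{subsec:rigidity}: for centered unit-volume endpoints, the affine-segment condition \eqref{eq:affine-hypothesis} alone forces the direct-sum representation. This is obtained through the regularized moment-map rigidity, not through equality of ordinary volumes. Your final step needs this stronger form as well.
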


The equality statement characterizes centered convex bodies with the same normalized cone-volume measure.

\begin{theorem}\label{thm:uniqueness}
Let $n\ge2$ and let $K,L\subset\R^n$ be centered convex bodies. Then $
 \vbar_K=\vbar_L,
$
if and only if they admit \eqref{eq:main-split}.
\end{theorem}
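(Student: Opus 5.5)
We derive Theorem~\ref{thm:uniqueness} from Theorem~\ref{thm:logm}, using the equality characterization there together with the symmetric roles of $K$ and $L$.

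\emph{Sufficiency.} Suppose $K,L$ admit \eqref{eq:main-split}. We claim that $\vbar_L=\vbar_K$.

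First we describe the cone-volume measure of a direct sum. Let $K=K_1\oplus\cdots\oplus K_r$ with $E_i$ complementary, and let $F_i$ denote the dual decomposition, i.e.\ $F_i$ is the annihilator of $\bigoplus_{j\ne i}E_j$. Then $h_K(u)=\sum_i h_{K_i}(\pi_i u)$, where $\pi_i$ is the projection onto $F_i$ along the annihilator of $E_i$. The normal cone of $K$ at a boundary point lying in a facet-type piece $K_1\oplus\cdots\oplus\partial K_i\oplus\cdots\oplus K_r$ is contained in $F_i$. Consequently $V_K$ is concentrated on $\bigcup_i(\Sn\cap F_i)$. Its restriction to $\Sn\cap F_i$ equals $\prod_{j\ne i}|K_j|$ times the (radially pushed forward) cone-volume measure of $K_i$ in $E_i$, computed with respect to the duality pairing, up to a constant depending only on the decomposition. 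Concretely, $V_K$ restricted to $F_i$ is $\frac{\dim E_i}{n}\,|K|\,\vbar_{K_i}$ (cone volumes over faces give the fraction $\dim E_i/n$ of the volume). Replacing $K_i$ by $a_iK_i$ leaves each $\vbar_{K_i}$ unchanged, because the cone-volume measure is homogeneous, and leaves the weights $\dim E_i/n$ unchanged. Hence $\vbar_L=\vbar_K$. The only thing to check carefully here is the weight $\dim E_i/n$; it follows from $V_{K}(\{u:\ u\in F_i\})=\frac1n\int_{\partial K\cap\{\nu\in F_i\}}\langle x,\nu\rangle$ and Fubini over the product structure.

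\emph{Necessity.} Suppose $\vbar_K=\vbar_L$. Apply \eqref{eq:main-m} to the pair $(K,L)$ and to the pair $(L,K)$:
\[
\int\log\frac{h_L}{h_K}\,d\vbar_K\ge\frac1n\log\frac{|L|}{|K|},\qquad
\int\log\frac{h_K}{h_L}\,d\vbar_L\ge\frac1n\log\frac{|K|}{|L|}.
\]
Since $\vbar_K=\vbar_L$, the left-hand side of the second inequality is the negative of the left-hand side of the first. Adding the two inequalities gives $0\ge0$. Hence both inequalities are equalities, and the equality case of Theorem~\ref{thm:logm} yields \eqref{eq:main-split}.

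The main step to verify is the sufficiency computation for non-orthogonal decompositions, where one must check that the normalized cone-volume measure is invariant under independent dilations of the summands. An alternative route for this direction, if the direct computation becomes messy, is to use the equality case of Theorem~\ref{thm:logm} directly. If \eqref{eq:main-split} holds, then equality holds in \eqref{eq:main-m} for the pair $(K,L')$ for every $L'=b_1K_1\oplus\cdots\oplus b_rK_r$. Differentiating this equality in $b_i$ at $b=a$ shows that $\vbar_K$ and $\vbar_L$ assign the same mass to each $F_i$, and that their normalized restrictions coincide with those of the factors. This yields the same conclusion.
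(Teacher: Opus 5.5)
Your proof is correct and follows the paper. Necessity is exactly the paper's argument: apply \eqref{eq:main-m} in both orders, note that the two deficits sum to zero, and invoke the equality case of Theorem~\ref{thm:logm}. Sufficiency is the content of Lemma~\ref{lem:product}, which the paper proves for orthogonal sums and then transfers to complementary subspaces via \eqref{eq:covariance}, rather than by your direct computation with the dual subspaces $F_i$. In that direct computation, the surface Jacobian of $\partial K_i\oplus\bigoplus_{j\ne i}K_j$ is not constant; it cancels against the supporting distance $\langle x,\nu\rangle$, which is the same cancellation that underlies \eqref{eq:covariance}.

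Your fallback route of differentiating in the $b_i$ would only recover the masses $V_K(S(F_i))=\frac{\dim E_i}{n}|K|$, not the restricted measures on each $S(F_i)$. So it could not replace the direct computation.
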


The positive-parameter conclusions use the notation of the $L_p$ Brunn--Minkowski theory \cite{Firey62,Lutwak93}. For $p>0$ and $0\le t\le1$, put
\begin{equation*}
 C_{p,t}(K,L)=W\!\left[((1-t)h_K^p+th_L^p)^{1/p}\right].
\end{equation*}
The $L_p$ surface area measure and mixed volume are
\begin{equation}\label{eq:lp-standard}
 dS_p(K,u)=h_K(u)^{1-p}\,dS_K(u),\qquad
 V_p(K,L)=\frac1n\int_{\Sn}h_L^p\,dS_p(K).
\end{equation}
For a convex body $K$ containing the origin in its interior, define
\begin{equation*}
 \Psi(K)=\min_{\xi\in\R^n}\int_K e^{-\langle\xi,x\rangle}\,dx,
 \qquad
 \xi_K=\underset{\xi\in\R^n}{\operatorname{argmin}}
              \int_K e^{-\langle\xi,x\rangle}\,dx.
\end{equation*}
The minimizer exists uniquely, and $\Psi(K)=|K|$ exactly when $K$ is centered; these facts are proved in Section~\ref{sec:preliminaries}. We also use the optimized weighted mixed integral
\begin{equation}\label{eq:lp-weighted-Vp}
 \widetilde V_p(K,L)=\frac1n\int_{\partial K}
 h_L(\nu_K)^p h_K(\nu_K)^{1-p}e^{-\langle\xi_K,x\rangle}
 \,d\mathcal H^{n-1}(x).
\end{equation}
The weight is determined by the first body. If $K$ is centered, then $\xi_K=0$ and $\widetilde V_p(K,L)=V_p(K,L)$.

\begin{theorem}\label{thm:lp-bm}
Let $n\ge2$ and $0<p<\infty$. For convex bodies $K,L$ containing the origin in their interiors and every $t\in[0,1]$,
\begin{equation}\label{eq:lp-weighted-BM}
 \Psi(C_{p,t}(K,L))^{p/n}
 \ge(1-t)\Psi(K)^{p/n}+t\Psi(L)^{p/n}.
\end{equation}
In particular, for centered convex bodies $K,L$,
\begin{equation}\label{eq:lp-centered-BM}
 |C_{p,t}(K,L)|^{p/n}
 \ge(1-t)|K|^{p/n}+t|L|^{p/n}.
\end{equation}
For each fixed $t\in(0,1)$, equality in either  inequality holds if and only if $L=cK$ for some $c>0$.
\end{theorem}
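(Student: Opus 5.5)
The plan is to prove the centered inequality \eqref{eq:lp-centered-BM} first, by reduction to Theorem~\ref{thm:logbm}. Then I would treat the weighted inequality \eqref{eq:lp-weighted-BM} by a variational argument in which the centered case is the model computation.

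\emph{Centered case.} Let $K,L$ be centered and fix $t\in(0,1)$. Normalize $K'=|K|^{-1/n}K$ and $L'=|L|^{-1/n}L$; these are still centered and have unit volume. Put
\[
 \lambda=(1-t)|K|^{p/n}+t|L|^{p/n},\qquad s=\frac{t|L|^{p/n}}{\lambda}\in(0,1).
\]
Then $(1-t)h_K^p+th_L^p=\lambda\big((1-s)h_{K'}^p+sh_{L'}^p\big)$. By the weighted AM--GM inequality,
\[
 \big((1-t)h_K^p+th_L^p\big)^{1/p}\ge\lambda^{1/p}h_{K'}^{1-s}h_{L'}^{s}.
\]
Wulff bodies are monotone in the defining function and $W[ch]=cW[h]$. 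Theorem~\ref{thm:logbm} applied to $K',L'$ then gives
\[
 |C_{p,t}(K,L)|\ge\lambda^{n/p}\,\big|W[h_{K'}^{1-s}h_{L'}^s]\big|\ge\lambda^{n/p},
\]
which is \eqref{eq:lp-centered-BM}.

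For the equality case, both steps must be equalities. Equality in Theorem~\ref{thm:logbm} gives the splitting \eqref{eq:main-split} for $K',L'$, with factors $K_i$ and ratios $a_i$. Equality in the first step means the Wulff body of the arithmetic mean has the same volume as that of the geometric mean. Since the geometric-mean Wulff body $M$ is contained in the arithmetic one, the two bodies coincide. Hence $h_M$ equals both the arithmetic and the geometric mean $S_M$-a.e. (the Wulff body of $h$ has support function equal to $h$ $S_M$-a.e.), so $h_{K'}=h_{L'}$ $S_M$-a.e.

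Here $M=\bigoplus a_i^{s}K_i$, and $S_M$ charges, for each $i$, normals annihilating $\bigoplus_{j\ne i}E_j$. On these normals $h_{L'}/h_{K'}=a_i$, so every $a_i=1$. Thus $L'=K'$, that is, $L=cK$.

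\emph{General case.} Set $C_t=C_{p,t}(K,L)$ and $f(t)=\Psi(C_t)^{p/n}$. It suffices to show that $f$ is concave on $[0,1]$. By the standard two-point reparametrization of $L_p$ combinations, this reduces to a one-sided derivative inequality at $t=0$ for arbitrary pairs:
\[
 \frac{d}{dt}\Big|_{t=0^+}\Psi(C_t)\ \ge\ \frac np\,\Psi(K)^{1-p/n}\big(\Psi(L)^{p/n}-\Psi(K)^{p/n}\big).
\]

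For the derivative I would use an envelope argument. Because $\xi_K$ is the unique minimizer and the integrand is smooth and strictly convex in $\xi$, the derivative of $\Psi(C_t)$ equals the derivative of $\int_{C_t}e^{-\langle\xi_K,x\rangle}dx$ with $\xi_K$ frozen. The Aleksandrov variational lemma for Wulff bodies then computes this as
\[
 \frac np\big(\widetilde V_p(K,L)-\widetilde V_p(K,K)\big),\qquad \widetilde V_p(K,K)=\Psi(K).
\]
So everything reduces to the weighted $L_p$ Minkowski inequality
\[
 \widetilde V_p(K,L)\ge\Psi(K)^{1-p/n}\Psi(L)^{p/n}.
\]

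To prove it, I would note two facts about the measure $e^{-\langle\xi_K,x\rangle}dx$ on $K$:
\begin{itemize}
\item it is log-concave and has barycenter $0$, by the first-order condition for $\xi_K$;
\item $\Psi(L)\le\int_L e^{-\langle\xi_K,x\rangle}dx$, since $\Psi(L)$ is a minimum over all $\xi$.
\end{itemize}
It therefore suffices to prove a weighted log-Minkowski inequality for this barycentered exponential measure and then pass to $p>0$ by Jensen's inequality, as in the centered case.

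\emph{Main obstacle.} I expect the main difficulty to be this weighted log-Minkowski inequality. The tools behind Theorem~\ref{thm:logm} must be adapted from Lebesgue measure to $e^{-\langle\xi,x\rangle}dx$, which is not homogeneous under dilation. I would first try to rerun the proof of Theorem~\ref{thm:logm}, using the barycenter condition in place of centering. The equality case would be traced back through Jensen (forcing $h_L/h_K$ constant $S_K$-a.e.) together with the strict convexity of the minimization in $\xi$, so that equality forces $L=cK$.
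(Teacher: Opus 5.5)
Your centered argument is correct. It runs in the opposite order from the paper, which proves the optimized inequality \eqref{eq:lp-weighted-BM} first and deduces \eqref{eq:lp-centered-BM} from $|C|\ge\Psi(C)$. For the centered equality case you do not need the direct-sum classification of Theorem~\ref{thm:logbm}. Once the geometric-mean and arithmetic-mean Wulff bodies coincide (call this body $Q$), Lemma~\ref{lem:reconstruct} gives $h_{K'}=h_{L'}=h_Q$ on $\supp S_Q$. Hence $K',L'\subset Q$, and since all three bodies have unit volume, $K'=L'=Q$.

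\textbf{The gap is in the general (noncentered) case.} Your reduction to $\widetilde V_p(K,L)\ge\Psi(K)^{1-p/n}\Psi(L)^{p/n}$ is fine, but the route you propose for that inequality cannot work. Write $\gamma_K(E)=\int_E e^{-\langle\xi_K,x\rangle}dx$. Your second bullet only helps if you prove the frozen-weight inequality $\int\log(h_L/h_K)\,d\omega_K\ge\frac1n\log(\gamma_K(L)/\gamma_K(K))$. This inequality is false whenever $\xi_K\neq0$.

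To see this, take $L=cK$ with $c\neq1$. The left side equals $\log c$, since $\omega_K$ is a probability measure. On the other side, $\gamma_K(cK)=c^n\int_K e^{-c\langle\xi_K,y\rangle}dy$. The function $s\mapsto\log\int_K e^{-s\langle\xi_K,y\rangle}dy$ is strictly convex, and its derivative vanishes at $s=1$ by the barycenter condition, so $s=1$ is its minimum. Hence $\gamma_K(cK)>c^n\gamma_K(K)$, and the right side exceeds $\log c$. The same example defeats the frozen $L_p$ version. The non-homogeneity you flag is exactly the obstruction, and it is removed only by re-optimizing the tilt for each body, i.e.\ by working with $\Psi$ itself.

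``Rerunning the proof of Theorem~\ref{thm:logm}'' does not supply the missing inequality either, because that proof takes Theorem~\ref{thm:logbm} at centered endpoints as its input. The missing ingredient is the concavity of $\cH(a)=\log\Psi(W[e^a])$ on all of $C(\Sn)$, namely \eqref{eq:H-concavity}. This is what the paper actually establishes in Section~\ref{subsec:concavity}. There, optimizing the tilt produces a covariance subtraction in the second variation of the soft partition functions, and the projected variance bound \eqref{eq:variance} controls exactly that quantity.

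Once \eqref{eq:H-concavity} is available, no derivative or envelope argument is needed. Your centered argument goes through verbatim with $\Psi$ in place of volume, using $K_0=\Psi(K)^{-1/n}K$ and $L_0=\Psi(L)^{-1/n}L$. Concavity of $\cH$ gives $\Psi(W[h_{K_0}^{1-\lambda}h_{L_0}^{\lambda}])\ge1$, and monotonicity then gives $\Psi(r^{-1}C_{p,t}(K,L))\ge1$.

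For the equality case, strict monotonicity of $\Psi$ under proper inclusion (Lemma~\ref{lem:tilt}) forces the two Wulff bodies to coincide. The reconstruction argument above, with $\Psi$ replacing volume, then gives $K_0=L_0$. Strict convexity of the minimization in $\xi$ is not what closes this step.
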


For centered $K,L$ and $0<t<1$, the weaker geometric-mean bound
\begin{equation}\label{eq:lp-centered-geometric}
 |C_{p,t}(K,L)|\ge |K|^{1-t}|L|^t
\end{equation}
has equality if and only if $K=L$. Thus the extra direct-sum equality cases in Theorem~\ref{thm:logbm} occur only at the logarithmic endpoint.

\begin{theorem}\label{thm:lp-minkowski}
Let $n\ge2$ and $0<p<\infty$. For convex bodies $K,L$ containing the origin in their interiors,
\begin{equation}\label{eq:lp-weighted-minkowski}
 \widetilde V_p(K,L)\ge\Psi(K)^{1-p/n}\Psi(L)^{p/n}.
\end{equation}
In particular, for centered convex bodies $K,L$,
\begin{equation}\label{eq:lp-centered-minkowski}
 V_p(K,L)\ge |K|^{1-p/n}|L|^{p/n}.
\end{equation}
Equality in either inequality holds if and only if $L=cK$ for some $c>0$. Consequently, for centered $K,L$,
\begin{equation}\label{eq:lp-surface-uniqueness}
 S_p(K,\cdot)=S_p(L,\cdot)
 \quad\Longleftrightarrow\quad
 \begin{cases}
 K=L,&p\ne n,\\
 L=cK\text{ for some }c>0,&p=n.
 \end{cases}
\end{equation}
\end{theorem}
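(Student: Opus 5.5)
We derive Theorem~\ref{thm:lp-minkowski} from Theorem~\ref{thm:lp-bm} by the standard first-variation argument, with the centered statements obtained by specializing $\xi_K=0$.

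\textbf{Step 1: concavity along the path.} Fix $K,L$ containing the origin in their interiors and set $K_t=C_{p,t}(K,L)$. Define
\[
f(t)=\Psi(K_t)^{p/n}-(1-t)\Psi(K)^{p/n}-t\Psi(L)^{p/n}.
\]
By \eqref{eq:lp-weighted-BM}, $f\ge0$ on $[0,1]$, and $f(0)=0$. Hence $f'(0^+)\ge0$, provided the one-sided derivative exists.

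\textbf{Step 2: the first variation of $\Psi$.} Write $\Phi(\xi,M)=\int_M e^{-\langle\xi,x\rangle}\,dx$, so that $\Psi(M)=\min_\xi\Phi(\xi,M)$.

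By the envelope (Danskin) principle, applied with the unique and continuously varying minimizer $\xi_{K_t}$ from Section~\ref{sec:preliminaries}, we get
\[
\frac{d}{dt}\Big|_{0^+}\Psi(K_t)=\frac{d}{dt}\Big|_{0^+}\Phi(\xi_K,K_t).
\]
For the fixed weight $e^{-\langle\xi_K,x\rangle}$, the Aleksandrov-type variational lemma for Wulff bodies gives
\[
\frac{d}{dt}\Big|_{0^+}\Phi(\xi_K,K_t)=\int_{\partial K}\dot h(\nu_K)\,e^{-\langle\xi_K,x\rangle}\,d\cH^{n-1}(x),
\]
where
\[
\dot h=\frac{d}{dt}\Big|_{0}\big((1-t)h_K^p+th_L^p\big)^{1/p}=\frac1p\,h_K^{1-p}\big(h_L^p-h_K^p\big).
\]
This yields
\[
\frac{d}{dt}\Big|_{0^+}\Psi(K_t)=\frac{n}{p}\Big(\widetilde V_p(K,L)-\widetilde V_p(K,K)\Big).
\]
Next we compute $\widetilde V_p(K,K)$. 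For any $\xi$, the divergence theorem applied to $x\,e^{-\langle\xi,x\rangle}$ gives
\[
\int_{\partial K}h_K(\nu)\,e^{-\langle\xi,x\rangle}\,d\cH^{n-1}=\int_K\big(n-\langle\xi,x\rangle\big)e^{-\langle\xi,x\rangle}\,dx.
\]
The optimality condition $\int_K x\,e^{-\langle\xi_K,x\rangle}\,dx=0$ at $\xi=\xi_K$ then gives $\widetilde V_p(K,K)=\Psi(K)$.

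\textbf{Step 3: the inequality.} Substituting into $f'(0^+)\ge0$ gives
\[
\Psi(K)^{p/n-1}\big(\widetilde V_p(K,L)-\Psi(K)\big)\ge\Psi(L)^{p/n}-\Psi(K)^{p/n},
\]
that is, $\widetilde V_p(K,L)\ge\Psi(K)^{1-p/n}\Psi(L)^{p/n}$, which is \eqref{eq:lp-weighted-minkowski}. Setting $\xi_K=\xi_L=0$ for centered bodies gives \eqref{eq:lp-centered-minkowski}.

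\textbf{Step 4: equality.} Sufficiency is immediate by homogeneity. Necessity is the main obstacle, because $f'(0)=0$ with $f\ge0$ does not by itself force $f\equiv0$.

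To handle this, we set $\phi(t)=\Psi(K_t)^{p/n}$. We will try to show that $\phi$ is concave on $[0,1]$. Since $C_{p,s}(K_a,K_b)\supseteq K_{(1-s)a+sb}$ by the pointwise $L_p$ combination of support functions, and $\Psi$ is monotone under inclusion, applying Theorem~\ref{thm:lp-bm} to the pair $(K_a,K_b)$ gives concavity of $\phi$.

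A concave $\phi$ lies below its tangent at $0$, which is the line $t\mapsto(1-t)\Psi(K)^{p/n}+t\Psi(L)^{p/n}$ when equality holds in \eqref{eq:lp-weighted-minkowski}. It also lies above that line by \eqref{eq:lp-weighted-BM}. Hence $f\equiv0$, and the equality case of Theorem~\ref{thm:lp-bm} gives $L=cK$.

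\textbf{Step 5: uniqueness of $S_p$.} Suppose $S_p(K,\cdot)=S_p(L,\cdot)$. Then
\[
V_p(K,Q)=V_p(L,Q)\quad\text{for all }Q,
\]
and in particular $|K|=V_p(K,K)=V_p(L,K)$ and $|L|=V_p(L,L)=V_p(K,L)$. Applying \eqref{eq:lp-centered-minkowski} in both orders gives
\[
|K|\ge|L|^{1-p/n}|K|^{p/n}\quad\text{and}\quad|L|\ge|K|^{1-p/n}|L|^{p/n}.
\]
Rearranging, these read $|K|^{1-p/n}\ge|L|^{1-p/n}$ and $|L|^{1-p/n}\ge|K|^{1-p/n}$.

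If $p\ne n$, then $|K|=|L|$. Equality then holds in \eqref{eq:lp-centered-minkowski}, so $L=cK$. Since $S_p(cK,\cdot)=c^{n-p}S_p(K,\cdot)$, this forces $c=1$.

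If $p=n$, then $S_p$ is scale invariant, so $L=cK$ is the most one can conclude; the converse directions are immediate.
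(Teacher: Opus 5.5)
Your proof is correct in substance but takes a different route from the paper.

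\textbf{The paper's route.} The paper never differentiates along the $L_p$ path. It combines concavity of $\cH$ with the first variation \eqref{eq:H-variation} in the direction $\log(h_L/h_K)$. This gives the weighted logarithmic Minkowski inequality $\int\log(h_L/h_K)\,d\omega_K\ge\frac1n\log(\Psi(L)/\Psi(K))$. It then applies Jensen's inequality for $s\mapsto e^{ps}$ with respect to the probability $\omega_K$. Rigidity follows at once: strict convexity forces $h_L/h_K\equiv c$ on $\supp\omega_K=\supp S_K$. Hence $L\subset cK$ by \eqref{eq:lp-reconstruction-consequence}, and $\Psi(cK)=\Psi(L)$ together with strict monotonicity of $\Psi$ gives $L=cK$.

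\textbf{Your route.} You run the classical first-variation derivation of a Minkowski inequality from its Brunn--Minkowski counterpart, Theorem~\ref{thm:lp-bm}. This needs two extra ingredients. First, you differentiate $\Psi$ along the curved path $\log h_K+tg+O(t^2)$, with $g=\frac1p\big((h_L/h_K)^p-1\big)$; this is legitimate by Lemma~\ref{lem:H-variation} and the Lipschitz bound \eqref{eq:H-basic}. Second, for equality you need concavity of $\phi(t)=\Psi(K_t)^{p/n}$, so that $\phi$ is squeezed between its chord and its tangent at $0$; you then borrow the equality case of Theorem~\ref{thm:lp-bm}.

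\textbf{Comparison.} Your approach uses Theorem~\ref{thm:lp-bm} as a black box, at the price of the extra concavity step. The paper's approach is shorter and gets the equality case directly from Jensen, without passing through the $L_p$ volume equality case.

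\textbf{Correction in Step~4.} The inclusion is reversed. Write $M_{p,a}=M_{p,a}(h_K,h_L)$. Since $K_a=W[M_{p,a}]$, you only know $h_{K_a}\le M_{p,a}$, and this is in general strict somewhere when $0<p<1$. Because $(1-s)M_{p,a}^p+sM_{p,b}^p=M_{p,(1-s)a+sb}^p$, monotonicity of the Wulff map gives $C_{p,s}(K_a,K_b)\subseteq K_{(1-s)a+sb}$. This is exactly the direction you need: $\phi((1-s)a+sb)\ge\Psi(C_{p,s}(K_a,K_b))^{p/n}\ge(1-s)\phi(a)+s\phi(b)$. With $\supseteq$ as written, monotonicity would point the wrong way and give no concavity.

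\textbf{Correction in Step~5.} For $p=n$, state explicitly that $V_n(K,L)=|L|=|K|^{0}|L|^{1}$ is already equality in \eqref{eq:lp-centered-minkowski}. That is what yields $L=cK$; the scale invariance of $S_n$ only explains why nothing stronger can be concluded.
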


The volume and mixed-volume inequalities for $p\ge1$ belong to the classical Brunn--Minkowski--Firey theory \cite{Firey62,Lutwak93,Schneider14}; at $p=1$, centering removes the translation freedom in the classical equality case. The conclusions for $0<p<1$ follow here for arbitrary centered pairs, without a symmetry assumption. The optimized inequalities are formulated on all bodies containing the origin and provide the common proof. The exception at $p=n$ in \eqref{eq:lp-surface-uniqueness} is forced by the homogeneity $S_p(cK,\cdot)=c^{n-p}S_p(K,\cdot)$, not by a loss of regularity.

The logarithmic Brunn--Minkowski conjecture was introduced by B\"or\"oczky, Lutwak, Yang and Zhang \cite{BLYZ12}. They proved the origin-symmetric planar case, including its equality cases, and related the volume inequality to the logarithmic Minkowski inequality. The centered formulations, with the common direct-sum equality structure, are recorded in \cite[Conjectures 3.2 and 3.4]{BoroczkySurvey}. The distinction between centering and symmetry is essential: a logarithmic Wulff combination of centered bodies need not be centered, and the odd translation modes cannot simply be discarded.

Several earlier results impose geometric symmetries or special structure. Saroglou proved the logarithmic inequality for unconditional pairs \cite{Saroglou15}. B\"or\"oczky and Kalantzopoulos treated pairs invariant under common systems of linear reflections whose fixed hyperplanes intersect only at the origin, and analyzed equality \cite{BK22}. Rotem obtained the complex-body case through complex interpolation \cite{Rotem14}. Van Handel established the local logarithmic inequality for origin-symmetric zonoids \cite{vH23}, while Tao, Xiong and Xiong studied the logarithmic Minkowski inequality and uniqueness for cylinders \cite{TXX23}. These hypotheses are distinct from the unrestricted centroid condition used here.

A complementary line of work studies the second variation of volume. Colesanti, Livshyts and Marsiglietti established a local logarithmic result near the Euclidean ball \cite{CLM17}. Kolesnikov and Milman developed the spectral formulation of local $L_p$ inequalities \cite{KM22}; Chen, Huang, Li and Liu obtained the global symmetric inequality for $p$ sufficiently close to one, and proved the equivalence of the local and global symmetric formulations \cite{CHLL20}; Putterman  gave another proof of  the equivalence \cite{Putterman21}. Milman's centro-affine approach gives further curvature-dependent and isomorphic logarithmic Minkowski inequalities \cite{Milman25}. More recently, Iffland treated bodies of revolution, allowing nonsymmetric comparison bodies under a weighted centering condition \cite{Iffland26}; this condition is not the ordinary centroid condition on the comparison body. Lu proved the global symmetric inequality under $n-2$ common orthogonal reflection symmetries \cite{Lu26}. Xi gave a new planar argument using dilation position \cite{Xi26}.

The connection with cone-volume measures also has an existence side. B\"or\"oczky, Lutwak, Yang and Zhang characterized the nonzero finite even measures realized by origin-symmetric convex bodies through the subspace concentration condition \cite{BLYZ13}. Henk and Linke established the subspace concentration inequality for centered polytopes \cite{HL14}, and B\"or\"oczky and Henk extended it to general centered convex bodies \cite{BH16}. These necessary concentration inequalities do not constitute an existence theorem for arbitrary noneven data. Theorem~\ref{thm:uniqueness} instead classifies the ambiguity between existing centered realizations; the direct-sum exceptions are consistent with the cylinder phenomena in \cite{TXX23}.

The analytic starting point is the moment-coordinate argument of the September 23, 2026 preprint \cite{OpenAI26}, which states the logarithmic Brunn--Minkowski inequality for arbitrary origin-symmetric convex bodies. Its compact adjoint identities and tensor calculation are reproduced below without symmetry assumptions. The retained-remainder approach to equality comes from the predecessor manuscript \cite{SunDraft}. The present proof replaces the even test space by the orthogonal complement of all affine functions and allows the limiting target to carry a nonconstant exponential density. Neither the symmetric volume theorem nor an equality classification is used as an unproved implication in the argument below.

The moment-measure results of Berman--Berndtsson and Cordero-Erausquin--Klartag supply the underlying coordinates \cite{BB13,CEK15}. Klartag's estimate bounds their Hessian from above \cite{Klartag13}, and Fathi identifies the target-coordinate Hessian as a Stein kernel \cite{Fathi19}. Related links between homogeneous variance inequalities and local $L_p$ inequalities were developed by Kolesnikov, Livshyts and Rotem \cite{KLR26}. The equality limit uses quantitative stability of moment measures \cite{BR26}, local convergence of log-concave densities \cite{CS10}, and interior Monge--Amp\`ere estimates \cite{FJM16}. The first variation of the optimized functional is obtained from the weighted Aleksandrov formula in \cite[Lemma 2.4]{FLMZ}.

The centered class need not be preserved by Wulff interpolation. To avoid applying an inequality outside its hypotheses, we work with
\begin{equation*}
 \cP(C)=\log\Psi(C),
 \qquad
 \cH(a)=\cP(W[e^a]),\quad a\in C(\Sn).
\end{equation*}
The minimizing exponential density has centroid zero. Moreover,
\[
 \cP(C)\le\log|C|,\qquad
 \cP(C)=\log|C|\quad\Longleftrightarrow\quad g(C)=0.
\]
We prove that $\cH$ is concave on the entire space $C(\Sn)$. Its values at centered endpoints are their logarithmic volumes, so this stronger functional assertion gives \eqref{eq:main-bm} even when an intermediate body is not centered.

For a finite family of one-sided support constraints, the regularization has potential
\[
 V(x)=\frac\varepsilon2|x|^2+\sum_i\psi_i(x)+\langle\xi,x\rangle,
 \qquad
 \psi_i(x)=\left(\frac{\langle p_i,x\rangle_+}{h_i}\right)^q.
\]
The tilt $\xi$ is chosen to center the resulting probability $\mu$. Optimizing in $\xi$ produces a covariance subtraction in the second derivative. The required bound is
\begin{equation*}
 \Var_\mu(F)-\E_\mu[xF]^T\big(\E_\mu[xx^T]\big)^{-1}\E_\mu[xF]
 \le\sum_i b_i^2\E_\mu\psi_i,
 \qquad F=\sum_i b_i\psi_i.
\end{equation*}
In moment coordinates the left side is the squared norm of the projection of $F$ away from affine functions. Compact double-divergence tests are dense in this orthogonal complement. A tensor identity with a sum-of-squares remainder proves this bound.

For equality, the optimized functional is affine along the logarithmic segment. We choose regularizations whose concavity deficit is of order $\varepsilon$ and retain the term $\varepsilon\E|W|^2$ in the remainder. This gives both a vanishing local derivative norm for a matrix field and a uniform global flux bound. The limiting matrix splits the moment potential and its gradient image. A common weighted cone measure transfers the splitting to the endpoints. The same affine-segment argument treats equality in Theorem~\ref{thm:logm}; Theorem~\ref{thm:uniqueness} then follows by applying that inequality in both orders.

Section~\ref{sec:preliminaries} collects the geometric and variational facts. Section~\ref{sec:proof-bm} proves Theorem~\ref{thm:logbm}, integrating the projected variance estimate and the rigidity argument into its proof. Section~\ref{sec:proof-m-uniqueness} proves Theorems~\ref{thm:logm} and~\ref{thm:uniqueness}. Section~\ref{sec:lp-proofs} proves Theorems~\ref{thm:lp-bm} and~\ref{thm:lp-minkowski}, including the optimized forms, the logarithmic limit, and the failure of the corresponding universal extensions for negative exponents. The positive-parameter equality arguments use functional concavity and strict scalar inequalities, independently of the logarithmic equality classification.

\section{Preliminaries}\label{sec:preliminaries}\ 

Let $\mathcal K^n_o$ denote the convex bodies containing the origin in their interiors, and let $\cK$ be the centered subclass. Write $B$ for the Euclidean unit ball and $\mathcal H^{n-1}$ for $(n-1)$-dimensional Hausdorff measure. Relative volumes and surface area measures are used for bodies in a proper subspace. The outer unit normal $\nu_K(x)$ exists at $\mathcal H^{n-1}$-almost every point of $\partial K$. For a Borel set $\omega\subset\Sn$,
\begin{equation*}
 V_K(\omega)=\frac1n\int_{\nu_K^{-1}(\omega)}
 \langle x,\nu_K(x)\rangle\,d\mathcal H^{n-1}(x).
\end{equation*}
In particular, $\supp V_K=\supp S_K$ for $K\in\mathcal K^n_o$. Standard facts about support functions, mixed volumes, and surface area measures can be found in \cite{Schneider14}.

The Wulff construction is monotone and satisfies $W[ch]=cW[h]$ for $c>0$. If $a,b\in C(\Sn)$, then
\begin{equation}\label{eq:wulff-sandwich}
 e^{-\|a-b\|_\infty}W[e^b]\subset W[e^a]
 \subset e^{\|a-b\|_\infty}W[e^b].
\end{equation}
For $T\in\mathrm{GL}(n)$ define
\[
 \vartheta_T(u)=\frac{T^{-T}u}{|T^{-T}u|}.
\]
The support transformation and the boundary formula give
\begin{equation}\label{eq:covariance}
 W[h_{TK}^{1-t}h_{TL}^t]=T W[h_K^{1-t}h_L^t],
 \qquad V_{TK}=|\det T|(\vartheta_T)_\#V_K.
\end{equation}
For the second identity, the surface Jacobian is $|\det T|\,|T^{-T}u|$, whereas the supporting distance is divided by $|T^{-T}u|$. Thus equality of normalized cone-volume measures is invariant under a common invertible linear map. Centering is preserved as well, since $g(TK)=Tg(K)$.

We use Aleksandrov's first-variation formula in the form
\begin{equation}\label{eq:aleksandrov}
 \left.\frac{d}{ds}\right|_{s=0}|W[h_K e^{sf}]|
 =\int_{\Sn}fh_K\,dS_K=n\int_{\Sn}f\,dV_K,
 \qquad f\in C(\Sn).
\end{equation}
The derivative is two-sided and requires no smoothness of $\partial K$; see \cite{BLYZ12,BLYZ13}. We also use its weighted version with the smooth positive density $e^{-\langle\xi,x\rangle}$, as in \cite[Lemma 2.4]{FLMZ}.

\begin{lemma}\label{lem:reconstruct}
If $Q\in\mathcal K^n_o$ and $S=\supp S_Q$, then
\begin{equation}\label{eq:reconstruct}
 Q=\bigcap_{u\in S}\{x:\langle x,u\rangle\le h_Q(u)\}.
\end{equation}
If $a\in C(\Sn)$ is positive and $Q=W[a]$, then
\begin{equation*}
 a=h_Q\quad\text{on }\supp S_Q.
\end{equation*}
\end{lemma}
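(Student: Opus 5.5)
We prove \eqref{eq:reconstruct} first and then derive the second assertion from it. Write $Q_S$ for the right-hand side of \eqref{eq:reconstruct}. The inclusion $Q\subset Q_S$ is immediate, since $\langle x,u\rangle\le h_Q(u)$ for every $x\in Q$ and every $u$. For the reverse inclusion we argue by contradiction. Suppose $y\in Q_S\setminus Q$. Since $Q_S$ is closed and convex and $Q$ has interior, the segment from an interior point of $Q$ to $y$ meets $\partial Q$ at a point $z$. Points of this segment just beyond $z$ still lie in $Q_S$ but not in $Q$.

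The key step is to find a point $x\in\partial Q$ near $z$ such that $x$ has a unique outer normal $u$ lying in $S$, and such that the supporting halfspace at $x$ in direction $u$ cuts off some point of $Q_S$. We would use the standard fact that $S_Q(\omega)=\mathcal H^{n-1}(\nu_Q^{-1}(\omega))$, so $\mathcal H^{n-1}$-almost every regular boundary point has its normal in $\supp S_Q$: the set of regular points whose normal lies in the open set $\Sn\setminus S$ has $\mathcal H^{n-1}$-measure $S_Q(\Sn\setminus S)=0$. Regular points with normals in $S$ are therefore dense in $\partial Q$. An alternative route is the polytope-free formulation: $Q_S$ is convex and contains $Q$, so suppose $Q_S\ne Q$. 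Then $|Q_S|>|Q|$, since both are convex bodies with $Q\subset Q_S$. But $|Q|=\frac1n\int h_Q\,dS_Q$ and, by the mixed-volume inequality, $V_1(Q,Q_S)=\frac1n\int_{S} h_{Q_S}\,dS_Q\le \frac1n\int_S h_Q\,dS_Q=|Q|$, using $h_{Q_S}\le h_Q$ on $S$ from the definition of $Q_S$. Minkowski's inequality $V_1(Q,Q_S)^n\ge |Q|^{n-1}|Q_S|$ then gives $|Q_S|\le|Q|$, a contradiction. We would adopt this second argument, since it is cleaner. The one point to check is that $Q_S$ is bounded, so that it is a convex body. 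Boundedness holds because $S$ is not contained in any closed hemisphere: $\int u\,dS_Q=0$ and $S_Q$ is not concentrated on a great subsphere.

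For the second assertion, let $Q=W[a]$. For every $u$, $Q\subset\{\langle x,u\rangle\le a(u)\}$, so $h_Q\le a$ everywhere. For the reverse inequality on $S$, note first that $W[a]$ is the intersection over all $u$. By \eqref{eq:reconstruct}, $Q$ also equals the intersection over $u\in S$ with $h_Q(u)$ in place of $a(u)$, which gives no direct contradiction. Instead we use the $\mathcal H^{n-1}$-almost every regular point description. For $S_Q$-almost every $u$ there is a boundary point $x$ with $\nu_Q(x)=u$ and a unique normal there. Since $x\in\partial W[a]$, some constraint $\langle\cdot,v\rangle\le a(v)$ is active at $x$, that is, $\langle x,v\rangle=a(v)\ge h_Q(v)\ge\langle x,v\rangle$. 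Hence $v$ is an outer normal at $x$, and uniqueness of the normal gives $v=u$, so $a(u)=h_Q(u)$. Activity of some constraint at a boundary point follows from compactness of $\Sn$ and continuity of $a$: if every $v$ had strict slack at $x$, the slack would be uniformly positive, forcing $x\in\intt Q$. This yields $a=h_Q$ on a set of full $S_Q$-measure, hence on a dense subset of $S$, and by continuity of $a$ and $h_Q$ on all of $S$.

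The main obstacle is the measure-theoretic step linking $\supp S_Q$ to regular boundary normals. This relies on the representation $S_Q(\omega)=\mathcal H^{n-1}(\nu_Q^{-1}(\omega))$ and on the fact that points of $\partial Q$ with a unique normal have full $\mathcal H^{n-1}$-measure.
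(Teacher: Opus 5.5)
Your proof is correct and is essentially the paper's argument. For \eqref{eq:reconstruct} you bound the intersection using $\int u\,dS_Q=0$ and then apply Minkowski's first inequality to get $|Q_S|\le|Q|$. For the second assertion you use an active constraint at each boundary point, uniqueness of the normal at $\mathcal H^{n-1}$-almost every point, and continuity of $a-h_Q$ on $\supp S_Q$. The abandoned first approach can be dropped, and so can your claim that the second assertion is derived from the first, since that part never uses \eqref{eq:reconstruct}.
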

\begin{proof}
The equilibrium identity $\int u\,dS_Q(u)=0$ and the fact that $S_Q$ is not concentrated on a proper great subsphere imply that $S$ positively spans $\R^n$. The right side of \eqref{eq:reconstruct} is therefore a convex body $P\supset Q$. Its support function agrees with $h_Q$ on $S$, so
\[
 V_1(Q,P)=\frac1n\int h_P\,dS_Q=|Q|.
\]
Minkowski's first inequality gives $|Q|\ge |Q|^{(n-1)/n}|P|^{1/n}$. Thus $|P|\le|Q|$, and the inclusion implies $P=Q$.

Fix $x\in\partial Q$.  At least one defining inequality is active at $x$.
Indeed, otherwise the positive continuous function
$u\mapsto a(u)-\langle x,u\rangle$ would have a positive minimum on the
sphere, placing a neighborhood of $x$ inside $Q$.  Thus there is a unit
vector $u$ with $\langle x,u\rangle=a(u)$.  Since $x\in Q$ and $h_Q\leq a$,
this gives $h_Q(u)=a(u)$, and $u$ is an outer normal at $x$.

At $\mathcal H^{n-1}$-almost every boundary point the outer unit normal is
unique.  At such a point the active direction must therefore be
$u=\nu_Q(x)$.  By the definition of surface area measure, $a=h_Q$ holds
$S_Q$-almost everywhere.  The nonnegative difference $a-h_Q$ is
continuous, so it vanishes on $\supp S_Q$.
\end{proof}

In particular, the reconstruction formula gives
\begin{equation}\label{eq:lp-reconstruction-consequence}
 h_L=c h_K\text{ on }\supp S_K
 \quad\Longrightarrow\quad L\subset cK\qquad(c>0).
\end{equation}

\begin{lemma}\label{lem:product}
Suppose $\R^n=E_1\oplus\cdots\oplus E_r$ is an orthogonal decomposition and $K_i\subset E_i$ contains the origin in its relative interior. Put $d_i=\dim E_i$, $v_i=|K_i|_{d_i}$, and $K=\bigoplus_i K_i$. Regard the intrinsic measure $V^{(d_i)}_{K_i}$ as a measure on $\Sn$ supported on $S(E_i)=E_i\cap\Sn$. Then
\begin{align}
 V_K&=\sum_{i=1}^r\frac{d_i}{n}
 \left(\prod_{j\ne i}v_j\right)V^{(d_i)}_{K_i},\notag
\\
 \vbar_K&=\sum_{i=1}^r\frac{d_i}{n}\frac{V^{(d_i)}_{K_i}}{v_i}.
 \label{eq:product-normalized}
\end{align}
For positive numbers $a_i$ and $L=\bigoplus_i a_iK_i$,
\begin{equation*}
 V_L=\left(\prod_i a_i^{d_i}\right)V_K,
 \qquad \vbar_L=\vbar_K.
\end{equation*}
These scaling identities hold for arbitrary complementary subspaces. In that case the normal subspaces are
\begin{equation}\label{eq:normal-subspaces}
 F_i=\left(\bigoplus_{j\ne i}E_j\right)^\perp,
 \qquad \supp V_K\subset\bigcup_i S(F_i).
\end{equation}
\end{lemma}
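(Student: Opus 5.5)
We first treat the orthogonal case and describe the boundary of $K=\bigoplus_i K_i$ together with its outer normals. A point of $K$ is $x=\sum_i x_i$ with $x_i\in K_i$. For $u\in\Sn$ write $u=\sum_i u_i$ with $u_i\in E_i$. Orthogonality gives
\[
 h_K(u)=\sum_i h_{K_i}(u_i).
\]
Up to an $\mathcal H^{n-1}$-null set, $\partial K$ is the disjoint union of the facets $\Sigma_i=(\partial_{E_i}K_i)\times\prod_{j\ne i}K_j$. Here $\partial_{E_i}$ is the relative boundary, and we drop the overlaps where two or more factors are on their boundary; these overlaps have codimension at least two, hence $\mathcal H^{n-1}$-measure zero. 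At a point of $\Sigma_i$ where $\partial_{E_i}K_i$ has a unique relative normal $\nu_{K_i}(x_i)\in S(E_i)$, the normal of $K$ is that same vector. Moreover $\mathcal H^{n-1}\llcorner\Sigma_i$ is the product of $\mathcal H^{d_i-1}\llcorner\partial_{E_i}K_i$ with Lebesgue measure on $\prod_{j\ne i}K_j$.

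Since $\nu=\nu_{K_i}(x_i)\in E_i$, we have $\langle x,\nu\rangle=\langle x_i,\nu_{K_i}(x_i)\rangle$. Fubini on $\Sigma_i$ then gives the contribution
\[
 \frac1n\Big(\prod_{j\ne i}v_j\Big)\, d_i\, V^{(d_i)}_{K_i}(\omega\cap S(E_i)),
\]
because $\frac1{d_i}\int\langle x_i,\nu\rangle\,d\mathcal H^{d_i-1}$ is the intrinsic cone-volume. Summing over $i$ gives the formula for $V_K$. Dividing by $|K|=\prod_j v_j$ gives \eqref{eq:product-normalized}. As a check, the total mass is $\sum_i d_i/n=1$.

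For scaling, we use $V^{(d_i)}_{a_iK_i}=a_i^{d_i}V^{(d_i)}_{K_i}$, since cone-volume is $d_i$-homogeneous and normals do not change under dilation. Substituting into the formula, the $i$-th term picks up the factor $a_i^{d_i}\prod_{j\ne i}a_j^{d_j}=\prod_j a_j^{d_j}$. This proves $V_L=(\prod a_i^{d_i})V_K$. Since $|L|=\prod_i a_i^{d_i}|K|$, it also gives $\vbar_L=\vbar_K$.

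For arbitrary complementary subspaces, we choose $T\in\mathrm{GL}(n)$ mapping the $E_i$ onto mutually orthogonal subspaces $E_i'$. Then $TK=\bigoplus TK_i$ and $TL=\bigoplus a_iTK_i$ are orthogonal sums. By the orthogonal case, $V_{TL}=(\prod a_i^{d_i})V_{TK}$. The covariance identity \eqref{eq:covariance} gives
\[
 V_{TK}=|\det T|(\vartheta_T)_\#V_K ,
\]
and likewise for $L$. Since $\vartheta_T$ is a homeomorphism, pushing forward by $\vartheta_T^{-1}$ gives $V_L=(\prod a_i^{d_i})V_K$, and normalizing gives $\vbar_L=\vbar_K$.

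For \eqref{eq:normal-subspaces}, the orthogonal case shows that $\supp V_{TK}$ lies in $\bigcup_i S(E_i')$. Now $E_i'=T(E_i)$ equals $\big(\bigoplus_{j\ne i}T E_j\big)^\perp$. A vector $u$ satisfies $\vartheta_T^{-1}(u)\propto T^{T}u$. If $u\perp TE_j$ for all $j\ne i$, then $T^Tu\perp E_j$ for all $j\ne i$. Hence $\vartheta_T^{-1}(S(E_i'))=S(F_i)$, which gives the inclusion for $\supp V_K$.

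The main technical point is the boundary decomposition: showing that the overlaps are null and that the normal is a.e. the factor normal. This follows from rectifiability and a dimension count. Alternatively, one can approximate each $K_i$ by polytopes, verify the formula facet by facet, and pass to the limit using weak continuity of cone-volume measures.
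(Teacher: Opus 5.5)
Your proposal is correct and follows essentially the same route as the paper. The paper decomposes the boundary of the orthogonal product into pieces with one boundary factor, applies Fubini with the factor normal, and scales termwise. It then orthogonalizes general complementary subspaces by a linear map and transfers both the scaling identities and the support inclusion via \eqref{eq:covariance} and the normal map $\vartheta_T$. Your write-up follows these same steps and supplies more of the measure-theoretic details, such as the null overlaps, the product structure of $\mathcal H^{n-1}$ on each piece, and the computation $\vartheta_T^{-1}(S(E_i'))=S(F_i)$.
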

\begin{proof}
Up to a set of boundary area zero, the boundary of an orthogonal product is the union of the products with one boundary factor and all other factors interior. On the part corresponding to $K_i$, the normal lies in $E_i$, the supporting distance is $h_{K_i}$, and integration over the other factors contributes $\prod_{j\ne i}v_j$. The boundary representation of the cone-volume measure gives the first product formula. Division by $|K|=\prod_i v_i$ proves \eqref{eq:product-normalized}. Each term of the first formula acquires the common factor $\prod_i a_i^{d_i}$ under the stated dilations. This proves the scaling identities. For complementary subspaces choose a common linear map making them orthogonal and apply \eqref{eq:covariance}. The normal transformation also gives \eqref{eq:normal-subspaces}.
\end{proof}

For $C\in\mathcal K^n_o$, set
\begin{equation*}
 Z_C(\xi)=\int_C e^{-\langle\xi,x\rangle}\,dx,
 \qquad \xi_C=\argmin_{\xi\in\R^n}Z_C(\xi),
 \qquad \cP(C)=\log Z_C(\xi_C).
\end{equation*}

\begin{lemma}\label{lem:tilt}
The minimizer defining $\xi_C$ exists and is unique. It is characterized by
\begin{equation}\label{eq:tilt-centering}
 \int_C xe^{-\langle\xi_C,x\rangle}\,dx=0.
\end{equation}
Moreover,
\begin{equation}\label{eq:tilt-volume}
 \cP(C)\le\log|C|,
 \qquad \cP(C)=\log|C|\quad\Longleftrightarrow\quad g(C)=0.
\end{equation}
If $C_j\to C$ in the Hausdorff metric within $\mathcal K^n_o$, then $\xi_{C_j}\to\xi_C$ and $\cP(C_j)\to\cP(C)$. The functional $\cP$ is strictly increasing under proper inclusion and
\begin{equation*}
 \cP(aC)=\cP(C)+n\log a\qquad(a>0).
\end{equation*}
\end{lemma}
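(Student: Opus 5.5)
We treat $Z_C$ as a strictly convex, coercive function of $\xi\in\R^n$ and read every assertion of Lemma~\ref{lem:tilt} off that structure.

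\emph{Existence, uniqueness, and characterization.} The function $Z_C$ is smooth, and differentiation under the integral gives
\[
\nabla Z_C(\xi)=-\int_C xe^{-\langle\xi,x\rangle}dx,
\qquad
\nabla^2Z_C(\xi)=\int_C xx^Te^{-\langle\xi,x\rangle}dx .
\]
The Hessian is positive definite because $C$ has nonempty interior, so no nonzero linear function vanishes a.e.\ on $C$. Hence $Z_C$ is strictly convex and has at most one minimizer.

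For coercivity, we use $0\in\intt C$: fix $r>0$ with $rB\subset C$. For $\xi\ne0$, the set $\{x\in rB:\langle x,-\xi/|\xi|\rangle\ge r/2\}$ has positive volume $c_r$. On it the integrand is at least $e^{r|\xi|/2}$, so $Z_C(\xi)\ge c_re^{r|\xi|/2}\to\infty$. A minimizer therefore exists, and it is unique by strict convexity. Since $Z_C$ is convex and differentiable, a point is the minimizer exactly when the gradient vanishes there, which is \eqref{eq:tilt-centering}.

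\emph{Comparison with volume.} Since $\cP(C)=\log\min Z_C\le\log Z_C(0)=\log|C|$, the inequality in \eqref{eq:tilt-volume} is immediate. Equality holds iff $0$ is the minimizer. By \eqref{eq:tilt-centering} this means $\int_Cx\,dx=0$, i.e.\ $g(C)=0$.

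\emph{Scaling and strict monotonicity.} Substituting $x=ay$ gives
\[
Z_{aC}(\xi)=a^nZ_C(a\xi).
\]
Minimizing both sides over $\xi$ yields $\cP(aC)=\cP(C)+n\log a$.

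If $C\subsetneq D$ with both convex bodies, then $D\setminus C$ contains a nonempty open set, since $C$ is closed and $D$ is the closure of its interior. Hence $Z_D(\xi)>Z_C(\xi)$ for every $\xi$. Evaluating at $\xi=\xi_D$,
\[
\cP(D)=\log Z_D(\xi_D)>\log Z_C(\xi_D)\ge\cP(C).
\]

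\emph{Continuity, the main obstacle.} The step needing care is continuity under Hausdorff convergence, where we need uniformity in both the body and the tilt. Suppose $C_j\to C$ in $\mathcal K^n_o$. Then for large $j$ we have $rB\subset C_j\subset RB$ for fixed $0<r<R$. Consequently the coercivity bound $Z_{C_j}(\xi)\ge c_re^{r|\xi|/2}$ is uniform in $j$. Together with $Z_{C_j}(0)\le|RB|$, this confines all minimizers $\xi_{C_j}$ to a fixed compact ball $M$.

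On $M$ the integrands are uniformly bounded. Also $|C_j\triangle C|\to0$, which holds for Hausdorff convergence of convex bodies. Therefore $Z_{C_j}\to Z_C$ uniformly on $M$. Uniform convergence on a compact set gives convergence of the minima over $M$, so $\cP(C_j)\to\cP(C)$.

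Now let $\xi^*$ be any subsequential limit of $\xi_{C_j}$. Then
\[
Z_C(\xi^*)=\lim_j Z_{C_j}(\xi_{C_j})=\min Z_C,
\]
so $\xi^*=\xi_C$ by uniqueness. Since the whole sequence lies in the compact set $M$ and every subsequential limit equals $\xi_C$, we conclude $\xi_{C_j}\to\xi_C$.
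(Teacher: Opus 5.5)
Your proof is correct and follows essentially the same route as the paper: strict convexity plus a coercivity bound from a region of $rB$ lying opposite to $\xi$, evaluation at $\xi=0$ for the volume comparison, the substitution $x=ay$ for scaling, evaluation at the minimizer of the larger body for strict monotonicity, and common inner and outer balls that confine the minimizers, followed by uniform convergence on a compact set of tilts for continuity. Working with $Z_C$ instead of $\log Z_C$ (whose Hessian is the covariance), and using a spherical cap instead of a small ball in the coercivity bound, are immaterial differences.
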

\begin{proof}
Differentiating the logarithmic partition function gives
\[
 \nabla_\xi\log Z_C(\xi)=-\E_\xi x,
 \qquad D^2_\xi\log Z_C(\xi)=\Cov_\xi(x)>0.
\]
Strict positivity follows from the nonempty interior of $C$. If $rB\subset C$ and $e=\xi/|\xi|$, then on $B(-re/2,r/4)$ one has $-\langle\xi,x\rangle\ge r|\xi|/4$. Consequently $Z_C(\xi)\ge |(r/4)B|e^{r|\xi|/4}$, which proves coercivity. Strict convexity gives uniqueness and \eqref{eq:tilt-centering}. Evaluation at $\xi=0$ proves the inequality in \eqref{eq:tilt-volume}; equality holds precisely when $\xi=0$ is the minimizer, equivalently $g(C)=0$.

For a convergent sequence of bodies, common inner and outer balls give a common coercive bound and an upper bound on $Z_{C_j}(0)$. Thus their minimizers remain in a compact set. The integrals converge uniformly on compact sets of $\xi$, by dominated convergence and the nullity of convex boundaries. Uniqueness identifies every limit of the minimizers. For a proper inclusion $K\subsetneq L$, use the minimizer for $L$:
\[
 \Psi(K)\le\int_K e^{-\xi_L\cdot x}\,dx
 <\int_L e^{-\xi_L\cdot x}\,dx=\Psi(L).
\]
The strict inequality holds because $L\setminus K$ has positive Lebesgue measure and the weight is positive. Taking logarithms proves strict monotonicity of $\cP$. A change of variables proves the scaling identity, equivalently $\Psi(aC)=a^n\Psi(C)$.
\end{proof}

For $a\in C(\Sn)$ define $\cH(a)=\cP(W[e^a])$. Lemma~\ref{lem:tilt} and \eqref{eq:wulff-sandwich} show, before any concavity assertion, that
\begin{equation}\label{eq:H-basic}
 \cH(a+c)=\cH(a)+nc,\qquad
 |\cH(a)-\cH(b)|\le n\|a-b\|_\infty.
\end{equation}
The functional is monotone in its argument. Since $Z_C(\xi_C)=\Psi(C)$, define the weighted cone probability by
\begin{equation}\label{eq:omega}
 \int_{\Sn} f\,d\omega_C
 =\frac1{nZ_C(\xi_C)}\int_{\partial C}
 f(\nu_C)h_C(\nu_C)e^{-\langle\xi_C,x\rangle}\,d\mathcal H^{n-1}(x).
\end{equation}
The divergence theorem applied to $xe^{-\langle\xi_C,x\rangle}$ and \eqref{eq:tilt-centering} gives $\omega_C(\Sn)=1$. Its support is $\supp S_C$, and $\omega_C=\vbar_C$ if $C$ is centered.

For the integral in \eqref{eq:lp-weighted-Vp}, this notation gives
\begin{equation*}
 \widetilde V_p(K,L)=\Psi(K)\int_{\Sn}
       \left(\frac{h_L}{h_K}\right)^p\,d\omega_K.
\end{equation*}
This identity is valid for every real $p$, since the support functions are positive and continuous. In particular, $\widetilde V_0(K,L)=\Psi(K)$.

\begin{lemma}\label{lem:H-variation}
For every $C\in\mathcal K^n_o$ and $f\in C(\Sn)$, the two-sided directional derivative exists and satisfies
\begin{equation}\label{eq:H-variation}
 D\cH(\log h_C)[f]=n\int_{\Sn}f\,d\omega_C.
\end{equation}
\end{lemma}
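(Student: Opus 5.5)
Fix $C\in\mathcal K^n_o$ and $f\in C(\Sn)$. For small $s$ put $C_s=W[h_Ce^{sf}]$, so that $\cH(\log h_C+sf)=\cP(C_s)=\log\min_\xi Z_{C_s}(\xi)$. The plan is an envelope (Danskin-type) argument. Upper and lower bounds on $\cP(C_s)-\cP(C)$ come from evaluating at the fixed tilts $\xi_C$ and $\xi_{C_s}$, and the difference quotients of the fixed-tilt partition functions are controlled by the weighted Aleksandrov formula \cite[Lemma 2.4]{FLMZ}. For fixed $\xi$, that formula will be used in the form
\[
 \left.\frac{d}{ds}\right|_{s=0}\int_{W[h_Ce^{sf}]}e^{-\langle\xi,x\rangle}\,dx
 =\int_{\partial C}f(\nu_C)h_C(\nu_C)e^{-\langle\xi,x\rangle}\,d\mathcal H^{n-1}(x),
\]
two-sided, valid for continuous $f$ and arbitrary $C\in\mathcal K^n_o$. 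This is the smooth-density analogue of \eqref{eq:aleksandrov}.

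\emph{Upper bound.} Since $\cP(C_s)\le\log Z_{C_s}(\xi_C)$ and $\cP(C)=\log Z_C(\xi_C)$,
\[
 \cP(C_s)-\cP(C)\le\log Z_{C_s}(\xi_C)-\log Z_C(\xi_C).
\]
Dividing by $s>0$ gives the limsup bound. Dividing by $s<0$ reverses the inequality and gives the liminf bound. The right side has derivative $n\int f\,d\omega_C$ by the displayed formula and the definition \eqref{eq:omega}. Hence $\limsup_{s\downarrow0}$ of the difference quotient is at most $n\int f\,d\omega_C$, and $\liminf_{s\uparrow0}$ is at least the same quantity.

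\emph{Reverse bounds.} Next use $\cP(C)\le\log Z_C(\xi_{C_s})$, which gives
\[
 \cP(C_s)-\cP(C)\ge\log Z_{C_s}(\xi_{C_s})-\log Z_C(\xi_{C_s}).
\]
Now the tilt moves with $s$, so a uniform version of the weighted Aleksandrov formula is needed. By \eqref{eq:wulff-sandwich} we have $e^{-|s|\|f\|}C\subset C_s\subset e^{|s|\|f\|}C$. Hence $C_s\to C$ in the Hausdorff metric, and Lemma~\ref{lem:tilt} gives $\xi_{C_s}\to\xi_C$. It therefore suffices to show
\[
 \frac{Z_{C_s}(\eta)-Z_C(\eta)}{s}=\int_{\partial C}f(\nu_C)h_C(\nu_C)e^{-\langle\eta,x\rangle}\,d\mathcal H^{n-1}+o(1)
\]
uniformly for $\eta$ in a compact neighborhood $N$ of $\xi_C$. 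Then the right side at $\eta=\xi_{C_s}$ converges to $nZ_C(\xi_C)\int f\,d\omega_C$. Combined with $Z_C(\xi_{C_s})\to Z_C(\xi_C)$, this shows the lower difference quotient tends to $n\int f\,d\omega_C$ from both sides. Together with the previous step, the two-sided derivative exists and equals \eqref{eq:H-variation}.

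\emph{Main obstacle: uniformity in $\eta$.} I would obtain it in one of two ways.

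The first route uses the radial representation. Write $Z_{C}(\eta)=\int_{\Sn}\int_0^{\rho_C(v)}e^{-r\langle\eta,v\rangle}r^{n-1}\,dr\,dv$, where $\rho$ is the radial function, and likewise for $C_s$. The radial functions satisfy $|\rho_{C_s}/\rho_C-1|\le e^{|s|\|f\|}-1$. The derivative of $\rho_{C_s}$ in $s$ exists for almost every $v$, and it is the one established in the unweighted proof of \cite{BLYZ12,BLYZ13}, namely $f(\nu_C(\rho_C(v)v))\rho_C(v)$ at almost every regular direction. The inner integrand depends on $\eta$ only through a smooth factor bounded on $N$. Dominated convergence therefore applies with a bound uniform in $\eta\in N$, by the Lipschitz bound on $\rho_{C_s}/s$. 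Converting back from radial coordinates to $\partial C$ produces the $h_C(\nu_C)$ factor.

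The second route, if the radial derivative is delicate, is softer. The difference quotients, viewed as functions of $\eta$, are equi-Lipschitz on $N$, since the $\eta$-derivative is bounded by $|x|$ times the same sandwich estimate. Pointwise convergence together with equicontinuity then yields uniform convergence. I would try this equicontinuity argument first, since it reuses the fixed-$\xi$ formula verbatim.
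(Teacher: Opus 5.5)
Your proposal is correct and is essentially the paper's argument: the same envelope sandwich between the fixed tilt $\xi_C$ and the moving tilt $\xi_{C_s}$, with $\xi_{C_s}\to\xi_C$ by Lemma~\ref{lem:tilt}. Your equi-Lipschitz route to uniformity is exactly the paper's bound $|C_s\triangle C|\,\sup_x|e^{-\langle\xi_{C_s},x\rangle}-e^{-\langle\xi_C,x\rangle}|=o(|s|)$ on the difference of the two increments. The paper also notes that $h_Ce^{sf}$ and $h_C+sh_Cf$ differ by $O(s^2)$, so a dilation sandwich lets it quote the additive weighted Aleksandrov formula; you folded this small step into your multiplicative statement of that formula.
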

\begin{proof}
For fixed $\xi$, the weighted Aleksandrov formula gives
\[
 \left.\frac{d}{dt}\right|_{t=0}
 \int_{W[h_Ce^{tf}]}e^{-\langle\xi,x\rangle}\,dx
 =\int_{\partial C}f(\nu_C)h_C(\nu_C)e^{-\langle\xi,x\rangle}
 \,d\mathcal H^{n-1}(x).
\]
The multiplicative perturbation differs from $h_C+th_Cf$ by $O(t^2)$ uniformly. A dilation sandwich changes the corresponding weighted volumes by $O(t^2)$, so the additive first-variation formula applies.

Put $C_t=W[h_Ce^{tf}]$, $\xi_t=\xi_{C_t}$, and $J(t,\xi)=\log\int_{C_t}e^{-\langle\xi,x\rangle}\,dx$. Lemma~\ref{lem:tilt} gives $\xi_t\to\xi_C$. All bodies lie in a fixed ball and $|C_t\mathbin\triangle C|=O(|t|)$. Write $I(t,\xi)=\int_{C_t}e^{-\xi\cdot x}\,dx$. Uniform convergence of the weights gives
\[
 \big|[I(t,\xi_t)-I(0,\xi_t)]-[I(t,\xi_C)-I(0,\xi_C)]\big|
 \le |C_t\mathbin\triangle C|\,
       \sup_x|e^{-\xi_t\cdot x}-e^{-\xi_C\cdot x}|=o(|t|),
\]
where the supremum is over that fixed ball. The increments are $O(|t|)$ and $I(0,\xi_t)\to I(0,\xi_C)>0$. Expanding the logarithm therefore shows that the logarithmic difference quotients also have the same limit for the two parameters. Minimality gives
\begin{align*}
 J(t,\xi_t)-J(0,\xi_t)
 &\le\cH(\log h_C+tf)-\cH(\log h_C)\\
 &\le J(t,\xi_C)-J(0,\xi_C).
\end{align*}
After division by $t$, with the inequalities reversed if $t<0$, the two bounds have the common limit in \eqref{eq:H-variation}.
\end{proof}

\section{Proof of Theorem 1.1}\label{sec:proof-bm}\ 

We first record the analytic facts that are used for both functional concavity and its equality limit. The remaining argument is the proof of Theorem~\ref{thm:logbm}; its equality part establishes a stronger implication for affine segments of $\cH$, needed in Section~\ref{sec:proof-m-uniqueness}.

\subsection{Moment coordinates and analytic preliminaries}\label{subsec:moment}\ 

We begin with the whole-space moment map. The compact-target theorem of Berman--Berndtsson \cite[Theorem 1.1]{BB13}, together with the moment-measure theory of \cite{CEK15}, applies to a positive smooth density on a convex body whose weighted centroid is zero. It gives a smooth strictly convex moment potential with gradient image equal to the interior of the body, unique up to source translation. For a uniformly convex target potential on a ball, Klartag's estimate \cite[Proposition 3.1 and Remark 3.5]{Klartag13} bounds the moment Hessian by $4\varepsilon^{-1}I$. These results require the centroid condition, not evenness.

\begin{lemma}\label{lem:moment}
Let $V\in C^\infty(\R^n)$ satisfy $D^2V\succeq\varepsilon I$ for some $\varepsilon>0$, and suppose
\[
 Z=\int_{\R^n}e^{-V(x)}\,dx,\qquad
 d\mu(x)=Z^{-1}e^{-V(x)}\,dx,\qquad \int x\,d\mu(x)=0.
\]
There is a strictly convex $\varphi\in C^\infty(\R^n)$ such that
\begin{equation*}
 \int e^{-\varphi(z)}\,dz=1,\qquad
 (\nabla\varphi)_\#(e^{-\varphi(z)}\,dz)=\mu,\qquad
 0\prec D^2\varphi\preceq\frac4\varepsilon I.
\end{equation*}
The gradient $\nabla\varphi:\R^n\to\R^n$ is a diffeomorphism, $\varphi(z)\ge c|z|-b$ for some $c>0$, and
\begin{equation*}
 \log\det D^2\varphi(z)=V(\nabla\varphi(z))+\log Z-\varphi(z).
\end{equation*}
\end{lemma}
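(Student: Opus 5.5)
We obtain Lemma~\ref{lem:moment} from the moment-measure existence theory, Klartag's Hessian bound, and elementary convexity arguments. Only one point needs care: the finite measure $\mu$ must be brought into the framework of \cite{CEK15}.

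\textbf{Existence.} Since $D^2V\succeq\varepsilon I$, $V$ grows at least quadratically. Hence $\mu$ is a log-concave probability measure with finite moments and barycenter zero. Its support is all of $\R^n$, so it is not concentrated on any hyperplane. The existence theorem of Cordero-Erausquin--Klartag \cite{CEK15} then gives an essentially continuous convex $\varphi$ with $\int e^{-\varphi}=1$ and $(\nabla\varphi)_\#(e^{-\varphi}dz)=\mu$, unique up to translation. In the whole-space case \cite[Theorem 1.1]{BB13} identifies $\varphi$ as the variational solution. Finiteness of $\int e^{-\varphi}$ forces $\varphi\to\infty$ at infinity, and convexity then gives a linear lower bound $\varphi(z)\ge c|z|-b$.

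\textbf{Regularity and the Monge--Amp\`ere equation.} Both the source density $e^{-\varphi}$ and the target density $Z^{-1}e^{-V}$ are positive on $\R^n$, and the target support is convex. Caffarelli's regularity theory, as used in \cite{CEK15} for smooth target densities, shows that $\varphi$ is strictly convex and $C^{1,\alpha}$. It solves
\[
 e^{-\varphi(z)}=Z^{-1}e^{-V(\nabla\varphi(z))}\det D^2\varphi(z).
\]
Bootstrapping with $V\in C^\infty$ gives $\varphi\in C^\infty$ and $D^2\varphi\succ0$. Taking logarithms gives the stated identity $\log\det D^2\varphi=V(\nabla\varphi)+\log Z-\varphi$. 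The gradient map is injective by strict convexity. Its image is the interior of the support of $\mu$, which is $\R^n$. The Jacobian $D^2\varphi$ is nonsingular. Together these make $\nabla\varphi$ a smooth diffeomorphism of $\R^n$ by the inverse function theorem.

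\textbf{Hessian bound (main obstacle).} The upper bound $D^2\varphi\preceq 4\varepsilon^{-1}I$ is Klartag's estimate \cite[Proposition 3.1, Remark 3.5]{Klartag13}. It requires the target potential to satisfy $D^2V\succeq\varepsilon I$, which is exactly our hypothesis. Checking that his normalization matches ours is the only delicate point. He phrases the result for the moment map of a uniformly log-concave measure, and the constant $4$ comes from his maximum-principle argument. I would handle the normalization by rescaling to $\varepsilon=1$: replace $V(x)$ by $V(x/\sqrt\varepsilon)$, and note that the corresponding potential scales so that the Hessian bound picks up the factor $\varepsilon^{-1}$.

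If an independent argument is preferred, one can differentiate the Monge--Amp\`ere equation twice in a direction $e$ and apply the maximum principle to $\partial_{ee}\varphi$. This is delicate on a noncompact domain. There I would approximate by compactly supported uniformly convex truncations and use the linear growth of $\varphi$ to localize the maximum. In practice I would simply cite Klartag.
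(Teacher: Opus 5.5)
\textbf{Gap: the Hessian bound (and finiteness of $\varphi$) on all of $\R^n$ is not available by citation.} Your existence step via \cite{CEK15}, the linear lower bound, and the dilation reduction to $\varepsilon=1$ are fine. The trouble is the step you treat as a pure citation.

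The paper invokes Klartag's estimate only for a uniformly convex potential on a ball, i.e.\ for a compactly supported target. Likewise, \cite[Theorem 1.1]{BB13} is a compact-target theorem, not a whole-space one. Your $\mu$ has full support on $\R^n$, so neither result applies to it directly. The delicate point is noncompactness, not normalization.

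The same issue is hidden in your regularity paragraph. A potential from \cite{CEK15} is only essentially continuous. For general targets its domain can be a proper convex set: heavy-tailed targets give potentials with a logarithmic barrier. So asserting that $e^{-\varphi}$ is positive on $\R^n$, and concluding $\varphi\in C^\infty(\R^n)$ with $\nabla\varphi:\R^n\to\R^n$, presupposes finiteness everywhere. In the paper that finiteness comes from the uniform Hessian cap, which is exactly the estimate still in question.

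\textbf{What is needed: a transfer argument from balls to $\R^n$.} This is the whole content of the paper's proof.
\begin{enumerate}
\item Truncate $\mu$ to $B_R$ and recenter by $m_R\to0$. The compact-target theorem and Klartag give $\varphi_R$ with gradient image $B_R-m_R$ and $D^2\varphi_R\preceq4\varepsilon^{-1}I$, uniformly in $R$.
\item Move the minimum of $\varphi_R$ to the origin and write $g_R$ for the normalized potential. Use the identity $\int z\cdot\nabla\varphi_R\,e^{-\varphi_R}\,dz=n$ and the supporting-plane bound $g_R(y)\ge(x\cdot y)_+-x\cdot z$ with $x=\nabla\varphi_R(z)$, integrated in $z$. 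Together they give the $R$-independent coercivity $g_R(y)\ge c|y|-n$.
\item This coercivity bounds the normalizing constants and supplies a common integrable majorant, so no mass escapes in the limit.
\item Combined with the Hessian cap and the determinant lower bound from the equation, it gives uniform interior Evans--Krylov and Schauder estimates.
\item A smooth subsequential limit then inherits the equation, the normalization, the pushforward, and the cap.
\end{enumerate}

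Your fallback sketch (truncate and localize a maximum principle) does not address this limit step. Alternatively, you could keep your potential from \cite{CEK15} and show $\varphi_R\to\varphi$ locally uniformly up to translation. The cap would pass to the limit, since convexity of $\Lambda|z|^2/2-\varphi_R$ survives such limits. But that stability statement needs the same uniform coercivity, and you have not supplied it.
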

\begin{proof}
Uniform convexity gives Gaussian decay after an affine term, so $Z$ and all polynomial moments are finite. Set
\[
 Z_R=\int_{B_R}e^{-V(x)}\,dx,\qquad
 m_R=Z_R^{-1}\int_{B_R}xe^{-V(x)}\,dx.
\]
Then $m_R\to0$. Translate the truncated probability by $-m_R$:
\[
 d\mu_R(x)=Z_R^{-1}\ind_{B_R-m_R}(x)e^{-V(x+m_R)}\,dx.
\]
Its centroid is zero. The compact-target theorem and the Hessian estimate give a potential $\varphi_R$ with
\[
 (\nabla\varphi_R)_\#(e^{-\varphi_R}\,dz)=\mu_R,\quad
 \int e^{-\varphi_R}=1,\quad
 \nabla\varphi_R(\R^n)=B_R-m_R,\quad
 0\prec D^2\varphi_R\preceq\Lambda I,
 \qquad\Lambda=4/\varepsilon.
\]
Translate the source so that the minimum is at zero, and put $a_R=\varphi_R(0)$ and $g_R=\varphi_R-a_R$. Supporting planes with slopes surrounding zero show that each $\varphi_R$ grows at least linearly. Thus integration of $\diver(ze^{-\varphi_R})$, justified by expanding compact cutoffs, gives
\begin{equation*}
 \int z\cdot\nabla\varphi_R(z)e^{-\varphi_R(z)}\,dz=n.
\end{equation*}
For $x=\nabla\varphi_R(z)$, convexity and the minimum normalization imply $x\cdot z\ge g_R(z)\ge0$ and
\[
 g_R(y)\ge g_R(z)+x\cdot y-x\cdot z.
\]
Together with $g_R(y)\ge0$, this yields $g_R(y)\ge(x\cdot y)_+-x\cdot z$. Integrating and using the preceding radial identity, we obtain
\begin{equation}\label{eq:source-coercive}
 g_R(y)\ge\int(x\cdot y)_+\,d\mu_R(x)-n\ge c|y|-n.
\end{equation}
The constant $c>0$ is independent of large $R$: a fixed ball about zero lies in $B_R-m_R$, and the densities there have a common positive lower bound. The Hessian estimate gives $g_R(y)\le\Lambda|y|^2/2$ and $|\nabla\varphi_R(y)|\le\Lambda|y|$. Normalization now implies
\begin{equation*}
 (2\pi/\Lambda)^{n/2}\le e^{a_R}=\int e^{-g_R(y)}\,dy
 \le e^n\int e^{-c|y|}\,dy.
\end{equation*}
Hence $a_R$ is bounded and the source densities have a common integrable exponential majorant.

The equation for the compact targets is
\[
 \log\det D^2\varphi_R=V(\nabla\varphi_R+m_R)+\log Z_R-\varphi_R.
\]
On each fixed source ball its right side and first derivatives are uniformly bounded. The determinant is bounded below there, and the Hessian upper bound gives a positive lower bound for each eigenvalue. Interior Evans--Krylov and Schauder estimates therefore give bounds of every order on smaller balls. One may apply the uniformly elliptic form of these estimates by extending the logarithm from the fixed eigenvalue interval to a smooth concave uniformly elliptic spectral function. A diagonal subsequence converges locally smoothly to $\varphi$, with the asserted moment equation, the Hessian bounds, and the linear lower bound. Dominated convergence gives $\int e^{-\varphi}=1$. For every bounded continuous $F$,
\[
 \int F(\nabla\varphi)e^{-\varphi}
 =\lim_R\int F(\nabla\varphi_R)e^{-\varphi_R}
 =\lim_R\int F\,d\mu_R=\int F\,d\mu.
\]
This proves the pushforward assertion.

The positive Hessian makes the gradient locally invertible and injective. Its image is dense because $\mu$ has positive density everywhere. Fix $x_0\in\R^n$ and choose finitely many image points $y_j=\nabla\varphi(z_j)$ whose convex hull contains a ball $B_r(x_0)$. Their supporting planes give
\[
 \varphi(z)-x_0\cdot z
 \ge\max_j\{(y_j-x_0)\cdot z+\varphi(z_j)-y_j\cdot z_j\}
 \ge r|z|-b_{x_0}.
\]
The left side attains a minimum, where $\nabla\varphi=x_0$. Thus the gradient is onto, and its local inverses form a global smooth inverse.
\end{proof}

Write
\[
 d\nu(z)=e^{-\varphi(z)}\,dz,\quad x=\nabla\varphi(z),\quad
 \tau=D_z^2\varphi,\quad M=\tau^{-1}.
\]
In target coordinates $\tau(x)$ means $D_z^2\varphi((\nabla\varphi)^{-1}(x))$. Expectations in the two coordinates are identified by the moment map. Repeated coordinate indices are summed. Let
\[
 \delta_\mu W=\nabla V\cdot W-\diver_xW,\qquad
 \partial_k^*=x_k-\partial_{z_k}.
\]
On matrices, $\delta_\mu$ acts rowwise. Compact integration by parts gives the Stein identity
\begin{equation}\label{eq:stein}
 \E[x_jF]=\E[\tau_{jk}\partial_{x_k}F],\qquad \delta_\mu\tau=x,
\end{equation}
which is the moment-map construction in \cite{Fathi19}. Define
\begin{equation*}
 Av=\delta_\mu(\tau\nabla_xv)=x\cdot\nabla_xv-\tau:D_x^2v.
\end{equation*}
It is formally symmetric on compact tests and satisfies $A1=0$ and $Ax_j=x_j$.

For $v\in C_c^\infty(\R^n)$ put
\begin{equation}\label{eq:test-fields}
 h=D_x^2v,\quad P=\tau h,\quad B=\tau h\tau=P\tau,\quad
 W=\delta_\mu B,\quad u=\delta_\mu W.
\end{equation}
All these fields have compact support in both coordinate systems. The chain rule gives
\begin{equation}\label{eq:adjoints}
 W_a=\partial_k^*P_{ak},\qquad
 W=\nabla_z\big((A-I)v\circ\nabla\varphi\big),\qquad
 u=A(A-I)v.
\end{equation}
For example, with $C_{ijk}=\partial_{z_i}\partial_{z_j}\partial_{z_k}\varphi$, expansion of the first expression gives
\[
 W_a=x_k\tau_{ai}v_{x_ix_k}-C_{aik}v_{x_ix_k}
      -\tau_{ai}\tau_{jk}v_{x_ix_kx_j},
\]
which is also the derivative in $z_a$ of $(A-I)v$. The full symmetry of the third derivatives is the only symmetry used. Two adjunctions yield
\begin{equation}\label{eq:pairings}
 \E[uF]=\E[W\cdot\nabla_xF]=\E[B:D_x^2F].
\end{equation}
In particular, $\E u=0$ and $\E[xu]=0$. Finally,
\begin{equation}\label{eq:hessian-identity}
 \|Av\|_2^2-\E[vAv]
 =\E\big\|\tau^{1/2}D_x^2v\tau^{1/2}\big\|_\HS^2.
\end{equation}
Indeed, the left side is $\E\nabla_xv\cdot\tau\nabla_x(A-I)v=\E\nabla_xv\cdot W=\E h:B$. These compact identities are the nonsymmetric form of the adjoint calculation in \cite[Section 4]{OpenAI26}; no domain for an unbounded operator is asserted.

The next lemma identifies the full obstruction to density of the compact double-divergence tests.

\begin{lemma}\label{lem:density}
Suppose $d\mu=\rho\,dx$ has positive smooth density and finite second moment, $\tau$ is smooth with $0\prec\tau\preceq\Lambda I$, and $A=x\cdot\nabla-\tau:D^2$ satisfies compact weighted symmetry and \eqref{eq:hessian-identity}. Then
\begin{equation}\label{eq:dense-range}
 \overline{\{A(A-I)v:v\in C_c^\infty(\R^n)\}}^{\,L^2(\mu)}
 =\Span\{1,x_1,\ldots,x_n\}^{\perp}.
\end{equation}
\end{lemma}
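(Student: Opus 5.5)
The inclusion ``$\subset$'' follows directly from the pairing identity \eqref{eq:pairings}, and the reverse inclusion reduces to a Liouville-type statement: any $F\in L^2(\mu)$ orthogonal to the range is affine. I would prove that statement in three steps: regularity of $F$, a cutoff energy argument that forces $D^2_xF\equiv0$, and control of the cutoff errors.

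\textbf{Inclusion ``$\subset$''.} Apply \eqref{eq:pairings} with $F=1$ and $F=x_j$. These test functions have vanishing Hessian, so every $u=A(A-I)v$ with $v\in C_c^\infty$ satisfies $\E u=0$ and $\E[x_ju]=0$. Affine functions lie in $L^2(\mu)$ by the finite second moment. Hence their orthogonal complement is closed and contains the range, and therefore its closure.

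\textbf{Reduction and regularity.} For the reverse inclusion it suffices to show that every $F\in L^2(\mu)$ with $\E[F\,A(A-I)v]=0$ for all $v\in C_c^\infty$ is affine. Writing the expectation as $\int F\rho\,A(A-I)v\,dx$, this says that $F\rho$ is a distributional solution of $(A(A-I))^*(F\rho)=0$. Here $(A(A-I))^*$ is the formal Lebesgue adjoint. Its principal part is $\tau_{ij}\tau_{kl}\partial_{ijkl}$ with smooth coefficients, and it is elliptic because $\tau\succ0$ pointwise. Interior elliptic regularity then makes $F\rho$, hence $F$, smooth. Compact weighted symmetry of $A$ now gives the equation $A(A-I)F=0$ pointwise.

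\textbf{Cutoff energy argument.} Take $v_R=\chi_RF$ with $\chi_R=\chi(x/R)$, where $\chi$ is a fixed bump equal to $1$ on $B_1$. The two adjunctions give
\[
 \E[v_R\,A(A-I)v_R]=\E[(Av_R)(A-I)v_R]=\|Av_R\|_2^2-\E[v_RAv_R],
\]
which equals $\E\|\tau^{1/2}D_x^2v_R\tau^{1/2}\|_\HS^2$ by \eqref{eq:hessian-identity}. Since $F\perp A(A-I)v_R$, the left side equals $\E[(\chi_R-1)F\,A(A-I)(\chi_RF)]$. This term, together with the difference between $D^2v_R$ and $\chi_RD^2F$, consists of commutator terms supported in the annulus $R\le|x|\le2R$. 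Once these errors tend to $0$, monotone convergence gives $\E\|\tau^{1/2}D^2F\tau^{1/2}\|_\HS^2=0$. Because $\tau\succ0$ pointwise, $D^2F\equiv0$ and $F$ is affine.

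\textbf{Main obstacle.} The main difficulty is controlling the annular commutator terms. They involve $\nabla F$, $D^2F$ and $D^3F$, multiplied by derivatives of $\chi_R$ of size $R^{-k}$, by $\tau\preceq\Lambda I$, and by the drift $x\cdot\nabla\chi_R=O(1)$ on the annulus. A priori we only know $F\in L^2(\mu)$. I would first derive weighted Caccioppoli estimates for the fourth-order equation $A(A-I)F=0$. The approach is to test against $\chi_R^{4}F$ and integrate by parts using the symmetry of $A$ and $\tau\preceq\Lambda I$, so as to bound the weighted energies $\E[\chi_R^2\,\nabla F\cdot\tau\nabla F]$ and $\E[\chi_R^4\|\tau^{1/2}D^2F\tau^{1/2}\|_\HS^2]$ by $C\,\E[F^2\ind_{|x|\ge R}]$ plus absorbable terms. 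The tails of $F^2$ and of $|x|^2$ (finite second moment) then make every commutator term $o(1)$ as $R\to\infty$.

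If the fourth-order Caccioppoli argument becomes too messy, my fallback is to split it into two second-order problems. First show that $G=(A-I)F$ satisfies $AG=0$ weakly and is therefore constant, by the same cutoff argument applied to the Dirichlet form $\E[\nabla G\cdot\tau\nabla G]$. The equation $(A-I)F=c$ together with $Ax_j=x_j$, $A1=0$ and a second cutoff argument would then identify $F$ as affine.
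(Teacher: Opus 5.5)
Your inclusion ``$\subset$'' and your regularity step are correct; with smooth $\tau$ and $\rho$, standard elliptic regularity suffices. The test function $\chi_R^4F$ is also the right one. Your fallback split is exactly the paper's decomposition $F=w+v$, with $v=AF$ and $w=F-AF=-(A-I)F$.

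\textbf{The gap.} Every one of your routes needs a priori integrability that the proposal never establishes. Writing $\Gamma(f)=\nabla f\cdot\tau\nabla f$, you must know $AF\in L^2(\mu)$ and $\E\Gamma(F)<\infty$, but only $F\in L^2(\mu)$ is given.
\begin{enumerate}
\item The Caccioppoli bound you state cannot hold. $F=x_1$ annihilates the range, yet $\E[\chi_R^2\Gamma(x_1)]\ge\E[\ind_{B_1}\tau_{11}]>0$ for $R\ge1$, while $\E[x_1^2\ind_{|x|\ge R}]\to0$. A valid bound can only use the full norm $\|F\|_2$, uniformly in $R$.
\item Your commutator route does not close under the hypotheses. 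Expanding $A(A-I)(\chi_RF)$ produces $D^3F$ together with first and second derivatives of $\tau$, coming from $\tau:D^2(\tau:D^2(\chi_RF))$. The lemma bounds neither.
\item The fallback has the same hole. The cutoff Liouville argument for $AG=0$ needs $G=(A-I)F\in L^2(\mu)$, i.e.\ $AF\in L^2(\mu)$. Without such integrability it is false: take $\tau=I$ and the standard Gaussian on $\R^n$, which satisfy all the hypotheses. Then $G(x)=\int_0^{x_1}e^{s^2/2}\,ds$ is a nonconstant solution of $AG=0$.
\end{enumerate}

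\textbf{The missing idea.} This is the core of the paper's proof: keep one factor of $A$ on $F$. By compact symmetry the equation reads $\E[AF\,(A-I)(\eta_r^4F)]=0$. The product rule $A(ab)=aAb+bAa-2\Gamma(a,b)$ expands $(A-I)(\eta_r^4F)$ using only $A\eta_r$, $\Gamma(\eta_r)$ and $\Gamma(\eta_r,F)$. These are controlled by $\tau\preceq\Lambda I$ and the linear drift alone, with no derivatives of $\tau$. Set $X_r=\|\eta_r^2AF\|_2$ and $Y_r^2=\E[\eta_r^2\Gamma(F)]$, and integrate $\E[\eta_r^2F\,AF]$ by parts once. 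With constants $c$ independent of $r\ge1$, one gets
\[
 X_r\le c\|F\|_2+\frac{c}{r}Y_r,\qquad Y_r^2\le\|F\|_2X_r+\frac{c}{r}\|F\|_2Y_r .
\]
These give bounds uniform in $r$, hence $AF\in L^2(\mu)$ and $\E\Gamma(F)<\infty$.

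\textbf{After the fix.} Your direct route then closes without the split. Orthogonality and compact symmetry give $\E[AF\,(A-I)(\chi_RF)]=0$, so \eqref{eq:hessian-identity} yields
\[
 \E\|\tau^{1/2}D^2(\chi_RF)\tau^{1/2}\|_\HS^2=\E\big[(A(\chi_RF)-AF)\,(A-I)(\chi_RF)\big].
\]
Here $A(\chi_RF)-AF=(\chi_R-1)AF+FA\chi_R-2\Gamma(\chi_R,F)\to0$ in $L^2(\mu)$, while $(A-I)(\chi_RF)$ stays bounded. This forces $D^2F=0$ on every ball. It is slightly shorter than the paper's separate treatment of $w$ (shown constant) and $v$ (shown affine via $Av=v$).
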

\begin{proof}
Write $\Gamma(f,h)=\nabla f\cdot\tau\nabla h$ and $\Gamma(f)=\Gamma(f,f)$. Since $A1=0$ and $Ax_j=x_j$, compact symmetry makes every element of the displayed range orthogonal to affine functions. Conversely, suppose $g\in L^2(\mu)$ annihilates that range. With $\mathscr L=A(A-I)$, formal symmetry gives
\[
 \mathscr L^t(\rho g)=\rho\mathscr Lg=0
\]
in distributions. Its principal symbol is $(\zeta\cdot\tau(x)\zeta)^2$, which is locally uniformly positive. To apply the Lebesgue-self-adjoint regularity statement in Lemma~\ref{lem:finite-elliptic}, set
\[
 \widehat{\mathscr L}=\rho^{1/2}\mathscr L\rho^{-1/2},\qquad
 \widehat g=\rho^{1/2}g.
\]
Weighted symmetry makes $\widehat{\mathscr L}$ formally self-adjoint for Lebesgue measure. It has smooth coefficients and the same principal symbol, and $\widehat{\mathscr L}\widehat g=0$ distributionally. The lemma gives $\widehat g\in C^4_\loc$, hence $g\in C^4_\loc$ because $\rho$ is smooth and positive. Compact $C^4$ tests can be used by approximation.

Choose $0\le\eta_r\le1$, equal to one on $B_r$ and supported in $B_{2r}$, with $|\nabla\eta_r|\le C/r$ and $|D^2\eta_r|\le C/r^2$. For $r\ge1$, the Hessian cap and the linear drift give
\begin{equation*}
 \Gamma(\eta_r)^{1/2}\le B/r,\qquad |A\eta_r|\le K,
\end{equation*}
where $B,K$ do not depend on $r$. All derivatives vanish off $B_{2r}\setminus B_r$. Put $v=Ag$, initially as a local function, and set
\[
 G=\|g\|_2,\qquad X_r=\|\eta_r^2v\|_2,\qquad
 Y_r^2=\E[\eta_r^2\Gamma(g)].
\]
Testing $\mathscr Lg=0$ against $\eta_r^4g$ gives $\E[v(A-I)(\eta_r^4g)]=0$. The product rule $A(ab)=aAb+bAa-2\Gamma(a,b)$ yields
\begin{align*}
 (A-I)(\eta_r^4g)
 &=\eta_r^4v+g\{4\eta_r^3A\eta_r-12\eta_r^2\Gamma(\eta_r)-\eta_r^4\}
   -8\eta_r^3\Gamma(\eta_r,g).
\end{align*}
Cauchy--Schwarz and the cutoff bounds, together with integration by parts in $\E[\eta_r^2gAg]$, imply
\begin{equation*}
 X_r\le aG+\frac{8B}{r}Y_r,\qquad
 Y_r^2\le GX_r+\frac{2B}{r}GY_r,
 \qquad a=1+4K+12B^2.
\end{equation*}
The first estimate is immediate when $X_r=0$; otherwise divide by $X_r$. These estimates give $Y_r^2\le aG^2+10BGY_r$, hence uniform bounds on $X_r,Y_r$. Exhaustion by balls proves $v\in L^2(\mu)$. Now put $w=g-v$. The equation gives
\[
 Aw=0,\qquad Av=v,\qquad w,v\in L^2(\mu).
\]
Testing $Aw=0$ with $\eta_r^2w$ gives
\[
 \big(\E[\eta_r^2\Gamma(w)]\big)^{1/2}\le\frac{2B}{r}\|w\|_2.
\]
Let $r\to\infty$ on any fixed ball. Positivity of $\tau$ shows that $w$ is constant. Similarly, with $Z_r^2=\E[\eta_r^2\Gamma(v)]$, testing $Av=v$ gives
\[
 Z_r^2\le\|v\|_2^2+\frac{2B}{r}\|v\|_2Z_r.
\]
Thus $\E\Gamma(v)<\infty$. For $f_r=\eta_rv$, the product rule gives
\[
 Af_r=\eta_rv+vA\eta_r-2\Gamma(\eta_r,v),\qquad
 f_r\to v,\quad Af_r\to v\quad\hbox{in }L^2(\mu).
\]
The second term has norm at most $K\|v\ind_{B_{2r}\setminus B_r}\|_2\to0$, and the last has norm at most $2Br^{-1}(\E\Gamma(v))^{1/2}$. Apply \eqref{eq:hessian-identity}, valid for compact $C^2$ tests by approximation:
\[
 \E\big\|\tau^{1/2}D^2f_r\tau^{1/2}\big\|_\HS^2
 =\|Af_r\|_2^2-\E[f_rAf_r]\longrightarrow0.
\]
On each fixed ball $f_r=v$ for large $r$, so $D^2v=0$. Hence $v$ is affine, and $Av=v$ eliminates its constant term. Therefore $g=w+v$ is affine. Taking orthogonal complements proves \eqref{eq:dense-range}.
\end{proof}

The support constraints below involve $(s_+)^q$, which has finite rather than infinite differentiability. We give the required regularity argument instead of treating this function as smooth.

\begin{lemma}\label{lem:finite-elliptic}
Let $\Omega\subset\R^n$ be open and let $\mathscr L$ be a scalar fourth-order operator, formally self-adjoint for Lebesgue measure, with coefficients in $C^{2n+14}_\loc(\Omega)$. Suppose its principal symbol is positive and locally uniformly elliptic. If $w\in L^2_\loc(\Omega)$ and $\mathscr Lw=0$ distributionally, then
\[
 w\in H^{n+8}_\loc(\Omega)\subset C^4_\loc(\Omega).
\]
\end{lemma}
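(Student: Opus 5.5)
The plan is to prove Lemma~\ref{lem:finite-elliptic} with the standard interior regularity bootstrap for elliptic operators with finitely smooth coefficients, run in negative Sobolev scales. Since the statement is local, I fix a ball $B'\Subset\Omega$ and work on a slightly larger ball $B''$ with $B'\Subset B''\Subset\Omega$. On $B''$ the principal symbol satisfies $\sigma(x,\zeta)\ge\lambda|\zeta|^4$, and the coefficients have bounded $C^{2n+14}$ norms. Formal self-adjointness lets me write $\mathscr L=\sum_{|\alpha|,|\beta|\le2}\partial^\alpha(a_{\alpha\beta}\partial^\beta)$ in divergence form, with $a_{\alpha\beta}$ in $C^{2n+12}$ at worst, after moving at most two derivatives off the original coefficients. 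This form is the one I use, because it makes sense of $\mathscr Lw$ for $w\in L^2_\loc$ and allows difference quotients.

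\emph{Step 1: from $L^2$ to $H^2$.} I would first show $w\in H^2_\loc$. One option is a parametrix argument: freeze coefficients and use $\mathscr L w=0$ together with the Gårding inequality $\langle\mathscr L\phi,\phi\rangle\ge c\|\phi\|_{H^2}^2-C\|\phi\|_{L^2}^2$ for $\phi$ supported in a small ball. The cleaner route is to mollify, $w_\delta=w*\rho_\delta$. Then
\[
\mathscr L(\chi w_\delta)=[\mathscr L,\chi]w_\delta+\chi[\mathscr L,*\rho_\delta]w .
\]
By Friedrichs' lemma the commutator $[\mathscr L,*\rho_\delta]$ is bounded uniformly in $\delta$ from $L^2$ to $H^{-3}$, using Lipschitz coefficients. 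Elliptic estimates for an operator with $C^{k}$ coefficients, $k$ large, give $\|\chi w_\delta\|_{H^{s+4}}\lesssim\|\mathscr L(\chi w_\delta)\|_{H^s}+\|\chi w_\delta\|_{L^2}$ for $-4\le s\le k-4$. Applying this with $s=-3$ gives a uniform $H^1$ bound. Iterating one derivative at a time, shrinking the cutoffs each time, gives a gain of one Sobolev order per step.

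\emph{Step 2: bootstrap to $H^{n+8}$.} Each step from $H^m$ to $H^{m+1}$ uses difference quotients in divergence form. It needs roughly $m+1$ derivatives of $a_{\alpha\beta}$ in $L^\infty$ in order to bound the commutator terms $\partial^\alpha(\Delta_h a_{\alpha\beta}\,\partial^\beta\tau_hw)$ in $H^{m-3}$. Reaching $H^{n+8}$ therefore needs about $n+6$ derivatives of $a_{\alpha\beta}$, plus a margin for the negative-order starting steps. That budget fits inside $C^{2n+12}$, and the hypothesis $C^{2n+14}$ was chosen generously for this. Finally, Sobolev embedding gives $H^{n+8}\subset C^{4}$, since $n+8-n/2>4$. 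This embedding is the conclusion.

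\emph{Main obstacle.} The step I expect to be delicate is the start at negative regularity, getting from $L^2$ into $H^1$ and $H^2$. There $\mathscr Lw$ is only a distribution in $H^{-4}$, so the energy method cannot be used directly. I would handle it by duality: the $H^{s+4}\to H^s$ elliptic estimate for $\mathscr L$ with $s\in[-4,0]$ follows from the estimate for the adjoint, which is $\mathscr L$ itself by formal self-adjointness, in the positive range $H^{4}\to L^2$ up to $H^{8}\to H^4$. That positive-range estimate follows from frozen-coefficient Fourier analysis plus a partition of unity, with smallness coming from the modulus of continuity of the top coefficients.
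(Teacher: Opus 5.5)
\textbf{There is a gap, at exactly the step you flag as the main obstacle.} Both the uniform $H^1$ bound in Step~1 and the first difference-quotient steps need the negative-order estimate
\[
 \|u\|_{H^{s+4}}\lesssim\|\mathscr Lu\|_{H^{s}}+\|u\|_{L^2}\qquad(u\in C_c^\infty,\ -4\le s<-2),
\]
and the duality you propose does not give it.

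First, the indices are off. The adjoint of $\mathscr L:H^{s+4}\to H^{s}$ is $\mathscr L:H^{-s}\to H^{-s-4}$. So for $s\in[-4,0]$ the dual problem lies again in $[-4,0]$, reflected about $s=-2$. For instance, $s=0$ is dual to $L^2\to H^{-4}$, not to $H^8\to H^4$.

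Second, and more seriously, duality does not turn estimates into estimates. Write $\|u\|_{H^{s+4}}$ as a supremum of $\langle u,g\rangle$ over $\|g\|_{H^{-s-4}}\le1$. You can use $\langle u,g\rangle=\langle\mathscr Lu,\phi\rangle$ only after \emph{solving} $\mathscr L\phi=g$ near $\supp u$ with $\|\phi\|_{H^{-s}}\lesssim\|g\|_{H^{-s-4}}$. By the closed range theorem, an estimate for $\mathscr L$ at level $s$ is essentially equivalent to such solvability for $\mathscr L^*=\mathscr L$ at the dual level $t=-4-s$. Estimates alone therefore go in a circle, and an independent existence input is missing.

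\textbf{How to repair it.} On a small ball $B_r$, G{\aa}rding's inequality (positive principal symbol, continuous top coefficients) together with Poincar\'e's inequality makes the bilinear form of $\mathscr L$ coercive on $H^2_0(B_r)$. Lax--Milgram then solves $\mathscr L\phi=g$ for $g\in H^{-2}$. Interior difference quotients give $\|\eta\phi\|_{H^{t+4}}\lesssim\|g\|_{H^t}$ for $-2\le t\le0$, where $\eta$ is a cutoff equal to one near $\supp u$. Pairing as above, using formal self-adjointness, gives the estimate for $s=-4-t\in[-4,-2]$; the range $s\in[-2,0]$ is the energy range and needs no duality. With this added, your mollification and bootstrap close. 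One minor correction: Friedrichs' lemma here needs about two bounded derivatives of the divergence-form coefficients rather than mere Lipschitz continuity, which is harmless under the hypotheses.

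\textbf{Comparison with the paper.} The paper uses self-adjointness in exactly this dual role, but more economically. On a torus it writes down the exact right parametrix $\widetilde{\mathscr L}Q=I+R$; this involves only finitely many Leibniz terms because $\widetilde{\mathscr L}$ is a differential operator. It chooses the extension $\widetilde{\mathscr L}=\theta\mathscr L\theta+(1-\theta)\Delta^2(1-\theta)$ to be formally self-adjoint, and takes adjoints to obtain the left parametrix $Q^*\widetilde{\mathscr L}=I+R^*$. Fourier-matrix bounds show that $Q^*$ and $R^*$ gain four and one derivatives on $H^s$ for negative $s$ as well. Hence $\chi w=Q^*[\mathscr L,\chi](\chi_1w)-R^*(\chi w)$ bootstraps directly from $L^2$, with no separate negative-order estimate; the $C^{2n+14}$ hypothesis is what those bounds consume. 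The idea your sketch lacks is that what dualizes is a right inverse, not an estimate.
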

\begin{proof}
Localize in a coordinate patch and place it inside a flat torus. With a smooth cutoff $\theta$ supported in the patch, extend the operator by
\[
 \widetilde{\mathscr L}=\theta\mathscr L\theta+(1-\theta)\Delta^2(1-\theta).
\]
The first term is extended by zero outside the patch. Both terms are formally self-adjoint. The principal symbol is $\theta^2p_4+(1-\theta)^2|\zeta|^4$, hence uniformly positive. The coefficients remain $C^N$, where $N=2n+14$, and the extension agrees with $\mathscr L$ where $\theta=1$.

On the torus of period $2\pi$, write $p_4$ for the extended principal symbol and put
\[
 b(x,k)=p_4(x,k)^{-1}\ (k\ne0),\quad b(x,0)=0,\qquad
 Qv(x)=\sum_{k\in\mathbb Z^n}b(x,k)\widehat v(k)e^{ik\cdot x}.
\]
Uniform ellipticity gives $|\partial_x^\alpha b(x,k)|\le C_\alpha\langle k\rangle^{-4}$ for $|\alpha|\le N$, where $\langle k\rangle=(1+|k|^2)^{1/2}$. The finite product rule gives
\[
 \widetilde{\mathscr L}Q=I+R,
 \qquad Rv=\sum_k r(x,k)\widehat v(k)e^{ik\cdot x},
\]
with $|\partial_x^\alpha r(x,k)|\le C_\alpha\langle k\rangle^{-1}$ for $|\alpha|\le N-4$; at $k=0$ set $r(x,0)=-1$. Let $d=N-4=2n+10$. Taking Fourier coefficients in $x$ and integrating by parts with $(1-\Delta_x)^{d/2}$ yields
\[
 |\widehat b(m,k)|\le C\langle m\rangle^{-d}\langle k\rangle^{-4},\qquad
 |\widehat r(m,k)|\le C\langle m\rangle^{-d}\langle k\rangle^{-1}.
\]
The Fourier matrix of $R^*$ therefore satisfies
\[
 |(R^*)_{\ell k}|\le C\langle\ell-k\rangle^{-d}\langle\ell\rangle^{-1}.
\]
Using $\langle\ell\rangle^s/\langle k\rangle^s\le C_s\langle\ell-k\rangle^{|s|}$ and Young's convolution inequality, we obtain
\begin{equation*}
 R^*:H^s\to H^{s+1}\quad(d>n+|s|),\qquad
 Q^*:H^t\to H^{t+4}\quad(d>n+|t|).
\end{equation*}
This also holds for negative exponents. Formal self-adjointness gives
\begin{equation*}
 Q^*\widetilde{\mathscr L}=I+R^*.
\end{equation*}
The identity extends by density to $H^s$ for integer $0\le s\le n+7$. Indeed, the same Fourier-matrix estimate makes $\widetilde{\mathscr L}:H^s\to H^{s-4}$ continuous; the coefficient condition $N>n+|s-4|$ holds throughout this range.

Suppose inductively that $w\in H^s_\loc$. Choose nested compact cutoffs $\chi,\chi_1$ where $\theta=1$, with $\chi_1=1$ near $\supp\chi$. Then
\[
 \widetilde{\mathscr L}(\chi w)=[\mathscr L,\chi](\chi_1w)\in H^{s-3}.
\]
The commutator has order at most three. Applying the adjoint parametrix identity and the preceding Sobolev mapping bounds gives
\[
 \chi w=Q^*[\mathscr L,\chi](\chi_1w)-R^*(\chi w)\in H^{s+1}.
\]
Both mapping conditions hold for $0\le s\le n+7$. Starting from $s=0$ and using successively nested cutoffs proves the claimed Sobolev regularity. Since $n+8>n/2+4$, Sobolev embedding gives $C^4_\loc$.
\end{proof}

The equality limit can have a nonzero exponential tilt. The following local convergence statement includes this case and allows the global convexity constants to tend to zero.

\begin{lemma}\label{lem:stability}
Let $C\in\mathcal K^n_o$, $\xi\in\R^n$, and
\[
 Z=\int_C e^{-\xi\cdot x}\,dx,\qquad
 d\mu=Z^{-1}\ind_C e^{-\xi\cdot x}\,dx,\qquad \int x\,d\mu=0.
\]
Let $V_j$ be convex potentials with $D^2V_j\succeq\varepsilon_jI$ for some $\varepsilon_j>0$, and let
\[
 Z_j=\int e^{-V_j},\qquad d\mu_j=Z_j^{-1}e^{-V_j}\,dx.
\]
Assume that every $\mu_j$ is centered and
\begin{equation*}
 W_1(\mu_j,\mu)\to0,\qquad Z_j\to Z,\qquad
 V_j\to(x\mapsto\xi\cdot x)\quad\hbox{locally in every fixed }C^k(\intt C).
\end{equation*}
For finite-regularity potentials, their differentiability orders are understood to tend to infinity. Assume that their probability-normalized moment potentials $\varphi_j$ have positive definite Hessians and gradient diffeomorphisms on $\R^n$. After source translations, for every fixed finite $k$,
\begin{equation*}
 \varphi_j\to\varphi\quad\hbox{in }C^k_\loc(\R^n),
\end{equation*}
where $\varphi$ is the smooth strictly convex moment potential of $\mu$ and
\begin{equation*}
 \det D^2\varphi=Z\exp(-\varphi+\xi\cdot\nabla\varphi),
 \qquad\nabla\varphi:\R^n\longrightarrow\intt C
 \quad\hbox{is a diffeomorphism}.
\end{equation*}
Consequently,
\begin{equation*}
 D^2\varphi_j\circ(\nabla\varphi_j)^{-1}
 \longrightarrow D^2\varphi\circ(\nabla\varphi)^{-1}
 \quad\hbox{locally in every fixed }C^k(\intt C).
\end{equation*}
\end{lemma}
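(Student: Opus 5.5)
The plan is a compactness argument: uniform bounds first, then a subsequential limit, then identification of that limit by uniqueness of moment potentials.

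\textbf{Step 1: uniform bounds on the normalized potentials.} Translate each source so that $\varphi_j$ attains its minimum at $0$, and write $g_j=\varphi_j-\varphi_j(0)$. I would repeat the coercivity argument from the proof of Lemma~\ref{lem:moment}, i.e.\ the argument giving \eqref{eq:source-coercive}. The radial identity $\int z\cdot\nabla\varphi_j\,e^{-\varphi_j}=n$ together with the pushforward relation gives
\[
 g_j(y)\ge\int(x\cdot y)_+\,d\mu_j(x)-n .
\]
Since $W_1(\mu_j,\mu)\to0$ and $\mu$ has a density bounded below on a ball around the origin inside $\intt C$, the integral is at least $c|y|$ for a fixed $c>0$ and all large $j$. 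Hence $g_j(y)\ge c|y|-n$ uniformly.

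For an upper bound I cannot use Klartag's estimate, since $\varepsilon_j\to0$. Instead I would argue as follows. Convexity and $g_j\ge0$ give $|\nabla\varphi_j|$ bounded wherever $g_j$ is bounded. Also $\nabla\varphi_j$ maps into $\supp\mu_j$, and most of the mass of $\mu_j$ lies in a fixed compact set by $W_1$ convergence. Combining this with normalization, $\int e^{-g_j}=e^{\varphi_j(0)}$, gives that $\varphi_j(0)$ is bounded above and below. Then $g_j$ is bounded on compact sets, because a convex function that is coercive and has $\int e^{-g_j}$ bounded below cannot blow up near the origin. Consequently the gradients are locally bounded.

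\textbf{Step 2: local regularity and passage to the limit.} Each $\varphi_j$ satisfies the Monge--Amp\`ere equation
\[
 \log\det D^2\varphi_j=V_j(\nabla\varphi_j)+\log Z_j-\varphi_j .
\]
Pass to a locally uniformly convergent subsequence with limit $\varphi$; Step~1 gives $\varphi(y)\ge c|y|-b$. The main obstacle is the interior regularity. Local uniform convergence plus the equation makes the right side bounded on compacts only if $\nabla\varphi_j$ stays in a compact subset of $\intt C$, where $V_j$ converges nicely. I would first show this localization. Suppose points of a fixed source ball were sent to $\nabla\varphi_j$ near $\partial C$ or outside $C$. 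By monotonicity of the gradient, the mass of the large source region beyond them would then be pushed to a region that carries vanishing $\mu$-mass, contradicting the pushforward identity and $W_1$ convergence. Alternatively, one can use the quantitative stability of moment measures \cite{BR26} for this step.

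Granting localization, the right side of the equation is bounded on each source ball, so $\det D^2\varphi_j$ is pinched there. Caffarelli-type interior $C^{1,\alpha}$ and $W^{2,p}$ estimates and strict convexity \cite{FJM16} then yield uniform ellipticity on smaller balls. Bootstrapping via Evans--Krylov and Schauder, using that $V_j\to\xi\cdot x$ in $C^k$ on compacts of $\intt C$ (with regularity orders tending to infinity), gives $C^k_\loc$ bounds for every $k$.

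\textbf{Step 3: identifying the limit.} The limit satisfies
\[
 \det D^2\varphi=Z\exp(-\varphi+\xi\cdot\nabla\varphi),
\]
and by dominated convergence with the exponential majorant, $\int e^{-\varphi}=1$ and $(\nabla\varphi)_\#e^{-\varphi}=\mu$. Smoothness and strict convexity give local invertibility of $\nabla\varphi$. Its image is open, contained in $\intt C$, and of full $\mu$-measure. Surjectivity onto $\intt C$ follows from the supporting-plane argument at the end of the proof of Lemma~\ref{lem:moment}: for $x_0\in\intt C$, choose image points whose hull contains a ball around $x_0$, which makes $\varphi-x_0\cdot z$ coercive. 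Hence $\nabla\varphi$ is a diffeomorphism onto $\intt C$.

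Uniqueness of moment potentials up to translation \cite{CEK15}, together with the minimum normalization, identifies the limit, so the whole sequence converges.

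\textbf{Step 4: convergence of target-coordinate Hessians.} Local $C^k$ convergence of the maps $\nabla\varphi_j$ with nondegenerate Jacobians, and Step~3's image statement, give that the inverses $(\nabla\varphi_j)^{-1}$ converge in $C^k$ on compacts of $\intt C$ by the inverse function theorem. Composing with $D^2\varphi_j$ yields the last assertion.
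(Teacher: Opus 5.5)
\textbf{Main gap: Step~1.} Your coercivity bound $g_j(y)\ge c|y|-n$ is fine; test the $W_1$ convergence against the $1$-Lipschitz functions $(x\cdot e)_+$. But it only bounds $\int e^{-g_j}=e^{\varphi_j(0)}$ from \emph{above}. The reverse bound (no concentration of the source densities) and the uniform local upper bounds on $g_j$ are not established, and neither justification you give works:
\begin{enumerate}
\item The remark that $\nabla\varphi_j$ maps into $\supp\mu_j$ is vacuous, since $e^{-V_j}>0$ everywhere and so $\supp\mu_j=\R^n$.
\item The principle that a coercive convex function with $\int e^{-g}$ bounded below cannot blow up near the origin is false. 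Take $g(y)=\max(c|y|,My_1)$. It is convex, $g(0)=0$, $g\ge c|y|$, and $\int e^{-g}\ge\int_{\{y_1\le0\}}e^{-c|y|}\,dy$ independently of $M$. Yet $g(e_1)=M\to\infty$.
\end{enumerate}
Only the pushforward constraint $(\nabla\varphi_j)_\#e^{-\varphi_j}=\mu_j\to\mu$ excludes such potentials. Turning that constraint into uniform local bounds is exactly the stability/compactness theory of moment measures. In Lemma~\ref{lem:moment} this role was played by Klartag's cap $g_R\le\Lambda|y|^2/2$, which, as you note, is unavailable here. Without Step~1 you have no locally uniformly convergent subsequence, so Steps~2--3 have nothing to act on.

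\textbf{Step~2 has related problems.} Monotonicity controls $\nabla\varphi_j(y)$ only in the direction $y-y_j$. It therefore does not push a half-space of source mass into a region of small $\mu$-mass, so the localization argument does not go through. In addition, Caffarelli's section estimates and \cite{FJM16} require a strict-convexity modulus uniform in $j$, which you never produce.

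\textbf{How the paper proceeds.} It reverses your order. Your fallback citation of \cite{BR26} is the right tool, but it must be used at the very start, together with \cite[Proposition 2]{CS10}:
\begin{enumerate}
\item \cite[Theorem 1.3]{BR26} gives translations with $e^{-\varphi_j(\cdot-v_j)}\to e^{-\varphi}$ in $L^1$. Here $\varphi$ is already identified as the \cite{BB13} potential of $\mu$.
\item \cite{CS10} upgrades this to locally uniform convergence of $\varphi_j$, and convexity upgrades it to $C^1_{\loc}$ convergence.
\item From the smooth strictly convex limit, both the localization $\nabla\varphi_j(B_{2R}(z_0))\subset Q'\Subset\intt C$ and a uniform strict-convexity modulus follow.
\item Only then are the determinant pinching, the section estimates, \cite{FJM16} and Schauder bootstrapping applied.
\end{enumerate}
With this order your Step~3 (identification via uniqueness) becomes unnecessary, and your Step~4 agrees with the paper.

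If you want to keep a pure compactness structure, you must either cite such a stability or compactness result for Step~1, or derive the uniform local bounds from the pushforward constraint yourself. You should also identify the $C^0$ limit, via the uniqueness in \cite{CEK15} for essentially continuous convex potentials, \emph{before} the regularity step. Localization then follows from $C^1_{\loc}$ convergence rather than from the monotonicity heuristic.
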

\begin{proof}
The compact-target theorem \cite[Theorem 1.1]{BB13}, applied to the positive smooth density $Z^{-1}e^{-\xi\cdot x}$ on $C$, gives the stated moment equation and uniqueness up to source translation. The required weighted centroid condition is exactly the hypothesis. Integrating through the gradient map verifies $\int e^{-\varphi}=1$. In particular, $e^{-\varphi}$ is a continuous strictly positive log-concave density on all of $\R^n$, without a regularity assumption on $\partial C$.

The measures $\mu_j,\mu$ are centered probabilities, have finite first moments, and are not supported on hyperplanes. Their moment potentials are essentially continuous. The stability theorem of Bonnet--Rubinstein \cite[Theorem 1.3]{BR26}, with $\mu$ fixed, gives translations $v_j$ such that
\begin{equation*}
 \|e^{-\varphi_j(\,\cdot-v_j)}-e^{-\varphi}\|_{L^1(\R^n)}\longrightarrow0.
\end{equation*}
If a target equals $\mu$, the same conclusion follows from translation uniqueness. Replace the potentials by these translates. This does not change the target-coordinate moment Hessian.

Convergence of log-concave densities to a continuous positive density is uniform on compact sets \cite[Proposition 2]{CS10}. Taking logarithms after this locally uniform convergence therefore gives local uniform convergence of $\varphi_j$. Convexity upgrades this to local uniform convergence of the gradients. Explicitly, for a compact set $Q$ with a compact neighborhood $Q^+$ and sufficiently small $s>0$, directional difference quotients give
\[
 \sup_Q|\nabla\varphi_j-\nabla\varphi|
 \le C_n\left(\frac{2\|\varphi_j-\varphi\|_{C^0(Q^+)}}s
                 +\omega_{Q^+}(s)\right),
\]
where $\omega_{Q^+}$ is a modulus of continuity of $\nabla\varphi$. Letting first $j\to\infty$ and then $s\downarrow0$ proves
\begin{equation*}
 \varphi_j\to\varphi\quad\hbox{in }C^1_\loc(\R^n).
\end{equation*}

We obtain uniform interior estimates without using the possibly diverging cap $4/\varepsilon_j$. The moment equation is
\begin{equation}\label{eq:approx-moment-equation}
 \det D^2\varphi_j=Z_j\exp\big(-\varphi_j+V_j(\nabla\varphi_j)\big).
\end{equation}
Fix a source ball $B_{2R}(z_0)$. Its closure has gradient image under $\varphi$ compactly contained in $\intt C$. By the local $C^1$ convergence, all its images under $\nabla\varphi_j$, for large $j$, lie in a common compact set $Q'\Subset\intt C$. Thus the hypotheses and \eqref{eq:approx-moment-equation} give
\begin{equation*}
 0<a\le\det D^2\varphi_j\le b<\infty
 \quad\hbox{on }B_{2R}(z_0),
\end{equation*}
with uniform bounds on every fixed derivative of $V_j$ on $Q'$.

The strict-convexity moduli can also be chosen uniformly. On the closure of this source ball let $D^2\varphi\succeq mI$, $m>0$, and put $e_j=\|\nabla\varphi_j-\nabla\varphi\|_{C^0}$. For $z,w$ in the ball,
\begin{align*}
 B_j(z,w)&:=\varphi_j(w)-\varphi_j(z)-\nabla\varphi_j(z)\cdot(w-z)\\
 &\ge\frac m2|w-z|^2-2e_j|w-z|.
\end{align*}
At every fixed separation $|w-z|\ge\rho>0$, this is at least $m\rho^2/4$ for all sufficiently large $j$. The finitely many remaining indices have a positive minimum on the same compact set of pairs. Hence the strict-convexity modulus is uniform at each fixed scale.

To specify the section geometry, subtract the tangent plane at $z_0$ and write
\[
 u_j(z)=\varphi_j(z)-\varphi_j(z_0)-\nabla\varphi_j(z_0)\cdot(z-z_0),
 \qquad S_j(t)=\{u_j<t\}.
\]
Using the strictly convex limit, choose $h,r>0$ such that its corresponding function $u$ satisfies $\min_{\partial B_R(z_0)}u>8h$ and $\sup_{B_r(z_0)}u<h/8$. For large $j$, local uniform convergence and convexity give
\[
 B_r(z_0)\subset S_j(h/4)\subset S_j(h/2)\subset S_j(h)
 \subset S_j(2h)\Subset B_R(z_0).
\]
The gradients of $u_j$ are bounded by a common $L>0$, and therefore
\[
 \operatorname{dist}(S_j(s),\partial S_j(t))\ge(t-s)/L
 \qquad(0<s<t\le2h).
\]
Caffarelli's interior section estimates, in the framework recalled in \cite[Sections 2.9 and 2.12]{BB13}, give a uniform local $C^{1,\alpha}$ bound from the determinant bounds and this section geometry. The right side of \eqref{eq:approx-moment-equation} is then uniformly H\"older continuous. The interior estimate of Figalli--Jhaveri--Mooney \cite[Theorem 1.1]{FJM16} gives a uniform local $C^{2,\alpha}$ bound on smaller neighborhoods, with the exponent decreased if necessary. Its constants are controlled by the determinant bounds, the H\"older bound, and the strict-convexity modulus just obtained. A finite covering gives, on every source compact set $Q$,
\begin{equation*}
 \delta_Q I\preceq D^2\varphi_j\preceq M_Q I
\end{equation*}
for positive constants independent of large $j$.

Let $A_j=(D^2\varphi_j)^{-1}$ and $w_{j,k}=\partial_k\varphi_j$. Differentiating the logarithm of \eqref{eq:approx-moment-equation} gives
\[
 (A_j)_{ab}\partial_{ab}w_{j,k}
 =\nabla V_j(\nabla\varphi_j)\cdot\nabla w_{j,k}-w_{j,k}.
\]
This equation is uniformly elliptic on compact sets. Schauder estimates and iteration yield bounds of every fixed order, using the corresponding derivatives of $V_j$ on $Q'$. For finite-regularity potentials, only the tail with sufficiently many derivatives is used. Compactness and the local $C^1$ convergence identify every limit and prove the asserted local $C^k$ convergence.

The inverse image of a target compact set under $\nabla\varphi$ is compact. The positive limiting Hessian and the local $C^k$ convergence give convergent local inverse maps there; injectivity identifies them with the global inverses. A finite covering and composition with the Hessians give the asserted convergence in target coordinates.
\end{proof}

\subsection{Concavity of the optimized functional}\label{subsec:concavity}\ 

\begin{proof}[Proof of the inequality in Theorem~\ref{thm:logbm}]
We prove the stronger assertion
\begin{equation}\label{eq:H-concavity}
 \cH((1-t)a+tb)\ge(1-t)\cH(a)+t\cH(b)
 \qquad(a,b\in C(\Sn),\ 0\le t\le1).
\end{equation}
We first justify the moment-coordinate calculations for the exact one-sided powers used to approximate support constraints. Fix $q\ge2n+20$ be even, $\varepsilon>0$, $c_i>0$, and $p_i\ne0$. Suppose the probability with potential
\begin{equation*}
 V(x)=\frac\varepsilon2|x|^2+\sum_{i=1}^N c_i\langle p_i,x\rangle_+^q+\xi\cdot x
\end{equation*}
is centered. Lemma~\ref{lem:moment} holds with $\varphi\in C^{q+1,\alpha}_\loc$ and a $C^{q,\alpha}_\loc$ gradient diffeomorphism, for every $0<\alpha<1$. The target field $\tau$ belongs to $C^{q-1,\alpha}_\loc$ and satisfies the same upper bound $4\varepsilon^{-1}I$. The compact identities \eqref{eq:stein}--\eqref{eq:hessian-identity}, the dense-range assertion \eqref{eq:dense-range}, and the tensor identities used below hold for this exact potential.

To establish these assertions, convolve each one-dimensional function $(s_+)^q$ with a nonnegative compactly supported smooth probability kernel of width $\delta$. The resulting convex nonnegative summands converge in $C^{q-1,\alpha}$ on compact sets, since $(s_+)^q\in C^{q-1,1}_\loc$. Denote the smoothed untilted potential by $U_\delta$ and choose $\xi_\delta$ minimizing $\int e^{-U_\delta-\xi\cdot x}\,dx$. Existence, uniqueness, and centering follow as in Lemma~\ref{lem:tilt}: the partition function is strictly convex, the potentials are uniformly bounded on a fixed ball, and integration on a ball opposite the tilt direction gives a common coercive bound. Also $\int e^{-U_\delta}\le\int e^{-\varepsilon|x|^2/2}$. The minimizers are therefore bounded. Gaussian domination and uniqueness imply $\xi_\delta\to\xi$, local $C^{q-1,\alpha}$ convergence of the target densities, and convergence of all polynomial moments.

Apply Lemma~\ref{lem:moment} to the smoothed centered probabilities and translate the source variables so that the minima occur at the origin. The proof of \eqref{eq:source-coercive} gives common source coercivity, because the target densities have a common positive lower bound on a fixed ball. The global Hessian cap and the moment equation give local positive lower Hessian bounds. Interior regularity and Schauder bootstrap then give uniform local $C^{q+1,\alpha}$ estimates, first with a slightly larger H\"older exponent and then with that exponent decreased. Passing to a subsequence proves the equation, normalization, pushforward, and the asserted regularity. The dense-image and supporting-plane argument from Lemma~\ref{lem:moment} proves surjectivity of the limiting gradient. Local inverse regularity yields the assertion about $\tau$.

On the support of a fixed smooth compact test, all coefficients and derivatives occurring in the compact adjoint and tensor calculations converge. The identities therefore pass to the exact potential. In particular, homogeneity is exact after this limit.

For the dense-range assertion, let $\rho=d\mu/dx$ and $\mathscr L=A(A-I)$. Its coefficients use at most two derivatives of $\tau$ and belong to $C^{q-3,\alpha}_\loc$. The conjugate
\[
 \widetilde{\mathscr L}=\rho^{1/2}\mathscr L\rho^{-1/2}
\]
is formally self-adjoint for Lebesgue measure. Its coefficients use at most four derivatives of $\rho^{-1/2}$ and belong to $C^{q-5,\alpha}_\loc\subset C^{2n+14}_\loc$. Its principal symbol is $(\zeta^T\tau\zeta)^2$. If $g\in L^2(\mu)$ annihilates the range, then $w=\rho^{1/2}g$ solves $\widetilde{\mathscr L}w=0$ distributionally. This follows first for compact $C^4$ tests by approximation, for which all coefficient products are classical. Lemma~\ref{lem:finite-elliptic} gives $g\in C^4_\loc$ and $Ag\in C^2_\loc$. These are precisely the regularities used in the cutoff proof of Lemma~\ref{lem:density}. Compact $C^4$ equation tests and compact $C^2$ Hessian tests are permitted by smooth approximation. Thus the same argument proves \eqref{eq:dense-range}.

Let $V$ be a potential of the form just regularized, and write
\[
 V=\frac\varepsilon2|x|^2+\sum_i\psi_i+\xi\cdot x,\qquad
 d\mu=Z^{-1}e^{-V}\,dx,\qquad \E x=0.
\]
Each $\psi_i$ is nonnegative, convex, and positively homogeneous of degree $q$. All polynomial moments are finite. The preceding approximation supplies the moment coordinates and the dense test range. In this subsection expectations are with respect to $\mu$, or equivalently to its source probability. Put
\[
 H=D_x^2V=\varepsilon I+\sum_iH_i,\qquad H_i=D_x^2\psi_i,\qquad
 \Sigma=\E[xx^T]\succ0.
\]
For real $b_i$, define
\begin{equation*}
 \begin{gathered}
 F=\sum_i b_i\psi_i,\qquad \beta=\Sigma^{-1}\E[xF],\qquad
 U=F-\E F-\beta\cdot x,\\
 a_i=\E\psi_i,\qquad \gamma_i=b_i/q,\qquad
 C_b=\sum_i b_i^2a_i,\qquad D=C_b-\E U^2.
 \end{gathered}
\end{equation*}
Thus $U$ is the orthogonal projection of $F$ onto the complement of affine functions.

For the compact fields in \eqref{eq:test-fields}, set
\begin{equation*}
 \mathcal T(v)=\E\tr((D_xW)^2)-\E\tr(MBHB).
\end{equation*}
The first square is a matrix square, not the squared Hilbert--Schmidt norm. Define the invariant derivative norm
\begin{equation}\label{eq:D-norm}
 \cD(P)=\sum_{a,b,k,r,\ell,s}
 M_{ab}\tau_{kr}M_{\ell s}
 (\partial_{z_\ell}P_{ak})(\partial_{z_s}P_{br}).
\end{equation}

We claim that the exact remainder is the following sum of nonnegative terms. 
For every $v\in C_c^\infty(\R^n)$,
\begin{align}
 D+\|u-U\|_{L^2(\mu)}^2
 &=\mathcal T(v)+\varepsilon\E\{\tr(MB^2)+|W|^2\}\notag\\
 &\quad+\sum_i\E\tr\big(M(B-\gamma_i\tau)H_i(B-\gamma_i\tau)\big)\notag\\
 &\quad+\sum_i\E\big[(W-\gamma_ix)^TH_i(W-\gamma_ix)\big].
 \label{eq:exact-remainder}
\end{align}
Every term on the right is nonnegative. More precisely,
\begin{equation}\label{eq:derivative-remainder}
 \mathcal T(v)\ge\frac12\E\cD(P).
\end{equation}

To prove the claim, first weighted integration by parts gives
\begin{equation}\label{eq:vector-bochner}
 \E u^2=\E[W^THW]+\E\tr((D_xW)^2).
\end{equation}
Indeed, $\partial_{x_i}u=H_{ij}W_j+(V_{x_j}-\partial_{x_j})\partial_{x_i}W_j$, and pairing with $W_i$ proves the identity.

We next compute $\mathcal T(v)$. The calculation is the moment-coordinate tensor argument of \cite[Section 6]{OpenAI26}, with its full derivative remainder retained. Put $C_{ijk}=\varphi_{z_iz_jz_k}$, $(C_i)_{k\ell}=C_{ik\ell}$, and $\zeta_i=\tr(MC_i)$. Differentiating the moment equation gives
\[
 \tau_{ik}V_{x_k}=x_i+\zeta_i,
\]
and
\[
 (\tau H\tau)_{ij}
 =\tau_{ij}+M_{k\ell}\varphi_{z_kz_\ell z_iz_j}
 -\tr(MC_iMC_j)-C_{ijk}V_{x_k}.
\]
Using $\partial_{z_j}M=-MC_jM$ and the symmetry of $C$, we have
\[
 \partial_{z_d}M_{dk}=-\zeta_bM_{bk},\qquad
 \partial_{z_d}M_{dk}-x_dM_{dk}=-V_{x_k}.
\]
Consequently,
\begin{equation*}
 (\tau H\tau-\tau)_{ij}
 =(\partial_{z_d}-x_d)(M_{dk}C_{ijk})-\tr(MC_iMC_j).
\end{equation*}
The commutator $[\partial_{z_\ell},\partial_k^*]=\tau_{k\ell}$ and \eqref{eq:adjoints} give
\begin{equation*}
 D_zW=B+R,\qquad R_{a\ell}=\partial_k^*\partial_{z_\ell}P_{ak}.
\end{equation*}
Both $D_zW$ and $B$ are symmetric, so $R$ is symmetric. Since $D_xW=(B+R)M$ and $MBM=h$, expansion followed by one integration by parts yields
\begin{align*}
 \E\tr((D_xW)^2)
 &=\E\tr(MBMB)
 +2\E[(\partial_{z_k}h_{a\ell})(\partial_{z_\ell}P_{ak})]
 +\E\tr(MRMR).
\end{align*}
For $Q=MBMBM=h\tau h$, contraction of the preceding divergence identity and integration of its divergence term give
\begin{align*}
 \E\tr(MBHB)
 &=\E\tr(MBMB)-\E[(\partial_{z_d}Q_{ij})M_{dk}C_{ijk}]
 -\E[Q_{ij}\tr(MC_iMC_j)].
\end{align*}
All fields being differentiated in these integrations have compact support.

The contracted scalars in these two expansions are invariant under a constant dual change of coordinates
\[
 z=Tz',\qquad x'=T^Tx,\qquad v'(x')=v(T^{-T}x').
\]
The normalized potentials are $\varphi'(z')=\varphi(Tz')-\log|\det T|$ and $V'(x')=V(T^{-T}x')$, with $Z'=|\det T|Z$. In particular,
\[
 \tau'=T^T\tau T,\quad M'=T^{-1}MT^{-T},\quad
 P'=T^TPT^{-T},\quad B'=T^TBT,\quad R'=T^TRT.
\]
At a fixed point choose the constant matrix $T=\tau^{-1/2}$. Thus $\tau=I$ in the new coordinates at that point. This normalization is performed after the integrations by parts; no varying frame is differentiated.

In these normalized coordinates set
\[
 S_{ijk}=v_{x_ix_jx_k},\qquad J_{ijk}=h_{ia}C_{ajk},\qquad
 J^{(12)}_{ijk}=J_{jik}.
\]
The tensor $S$ is fully symmetric and $J$ is symmetric in its last two indices. Direct differentiation of $P=\tau h$ and $Q=h\tau h$ gives
\begin{align*}
 2(\partial_{z_k}h_{a\ell})(\partial_{z_\ell}P_{ak})
 &=2|S|^2+2\langle S,J\rangle,\\
 (\partial_{z_d}Q_{ij})C_{ijd}
 &=2\langle S,J\rangle+\langle J,J^{(12)}\rangle,
 \\
 Q_{ij}\tr(C_iC_j)&=|J|^2.
\end{align*}
For the first contraction, use $\partial_{z_k}h_{a\ell}=S_{a\ell k}$ and $\partial_{z_\ell}P_{ak}=C_{ai\ell}h_{ik}+S_{ak\ell}$. In the second contraction, differentiating the two factors $h$ gives the two copies of $\langle S,J\rangle$, while differentiating $\tau$ gives
\[
 \sum_{i,a,b,j,d}h_{ia}C_{abd}h_{bj}C_{ijd}
 =\sum_d\tr(hC_dhC_d)=\langle J,J^{(12)}\rangle.
\]
The last contraction follows from $Q=h^2$ and the symmetry of $C$. Subtracting the two expansions therefore yields
\begin{align*}
 \mathcal T(v)=\E\big[\|M^{1/2}RM^{1/2}\|_\HS^2
 +2|S|^2+4\langle S,J\rangle+|J|^2+\langle J,J^{(12)}\rangle\big].
\end{align*}
The last tensor expressions are evaluated in a normalized frame; their values do not depend on the choice of that frame. Full symmetrization gives
\[
 (\Sym J)_{ijk}=\frac{J_{ijk}+J_{jik}+J_{kij}}3,
 \qquad |\Sym J|^2=\frac{|J|^2+2\langle J,J^{(12)}\rangle}3,
 \qquad\langle S,\Sym J\rangle=\langle S,J\rangle.
\]
Hence
\begin{equation*}
 \mathcal T(v)=\E\left[
 \|M^{1/2}RM^{1/2}\|_\HS^2
 +2|S+\Sym J|^2+\frac16|J-J^{(12)}|^2\right].
\end{equation*}
For $K_{ak\ell}=\partial_{z_\ell}P_{ak}$ in normalized coordinates,
\[
 K_{ak\ell}=S_{ak\ell}+J_{ka\ell},\qquad
 |K|^2=|S|^2+2\langle S,J\rangle+|J|^2,\qquad
 \Sym K=S+\Sym J.
\]
The preceding formulas imply
\[
 2|S+\Sym J|^2+\frac16|J-J^{(12)}|^2
 =\frac12|K|^2+\frac32|\Sym K|^2.
\]
The invariant expression for $|K|^2$ is \eqref{eq:D-norm}. This proves \eqref{eq:derivative-remainder} and $\mathcal T(v)\ge0$.

It remains to complete the homogeneous squares. Let
\[
 m_i=\E\tr(\tau H_i),\qquad d_i=\E[u\psi_i].
\]
The Stein identity, homogeneity, and \eqref{eq:pairings} give
\begin{equation*}
 \begin{gathered}
 m_i=\E[x\cdot\nabla\psi_i]=qa_i,\qquad
 d_i=\E\tr(BH_i)=\E[W\cdot\nabla\psi_i],\\
 \E[x^TH_ix]=(q-1)m_i,\qquad
 \E[x^TH_iW]=(q-1)d_i.
 \end{gathered}
\end{equation*}
The first identity, involving a noncompact polynomial-growth test, is justified by an expanding cutoff, the upper bound on $\tau$, and the finite polynomial moments. The other integrations involve compact fields. Expanding gives
\begin{align*}
 \E\tr\big(M(B-\gamma_i\tau)H_i(B-\gamma_i\tau)\big)
 &=\E\tr(MBH_iB)-2\gamma_id_i+\gamma_i^2m_i,\\
 \E[(W-\gamma_ix)^TH_i(W-\gamma_ix)]
 &=\E[W^TH_iW]-2\gamma_i(q-1)d_i+\gamma_i^2(q-1)m_i.
\end{align*}
Their sum is $\E\tr(MBH_iB)+\E[W^TH_iW]-2b_id_i+b_i^2a_i$. Summing and using \eqref{eq:vector-bochner} shows that the right side of \eqref{eq:exact-remainder} is $C_b-2\E[uF]+\E u^2$. Since $u$ is orthogonal to affine functions, $\E[uF]=\E[uU]$, and this is $D+\|u-U\|_2^2$.

Finally, $H_i\succeq0$, $M\succ0$, and $B-\gamma_i\tau$ is symmetric, so
\[
 \tr\big(M(B-\gamma_i\tau)H_i(B-\gamma_i\tau)\big)
 =\|H_i^{1/2}(B-\gamma_i\tau)M^{1/2}\|_\HS^2\ge0.
\]
Also $\tr(MB^2)=\|BM^{1/2}\|_\HS^2\ge0$. This proves every sign assertion. The affine term $\xi\cdot x$ has zero Hessian and introduces no additional term.

The identity implies the projected variance bound. For the data above,
\begin{equation}\label{eq:variance}
 \Var_\mu(F)-\E[xF]^T\Sigma^{-1}\E[xF]
 \le\sum_i b_i^2\E\psi_i.
\end{equation}
For every $\eta>0$, a compact smooth test $v$ can be chosen so that its fields satisfy
\begin{equation}\label{eq:retained-fields}
 \|u-U\|_2\le\eta,\qquad
 \mathcal T(v)+\varepsilon\E|W|^2\le D+\eta^2,
 \qquad \frac12\E\cD(P)\le\mathcal T(v).
\end{equation}

Indeed, by construction $\E U=0$ and $\E[xU]=0$. Lemma~\ref{lem:density}, extended to the exact powers above, provides tests with $u\to U$ in $L^2(\mu)$. The nonnegative remainder in \eqref{eq:exact-remainder} gives $D\ge0$ in the limit. Since $\E U^2=\Var(F)-\E[xF]^T\Sigma^{-1}\E[xF]$, this is \eqref{eq:variance}. Keeping a test with error at most $\eta$ gives \eqref{eq:retained-fields}.

For later use, if $\kappa I\preceq\tau\preceq\Lambda I$ on a set, then
\begin{equation}\label{eq:derivative-comparison}
 \frac\kappa{\Lambda^2}|D_zP|^2\le\cD(P)
 \le\frac\Lambda{\kappa^2}|D_zP|^2.
\end{equation}
Only local upper and lower Hessian bounds are needed for this comparison.

We now pass from the variance bound to the optimized partition function. Fix unit vectors $u_1,\ldots,u_N$ that positively span $\R^n$. For $a=(a_i)\in\R^N$ put
\[
 Q(a)=\{x:\langle u_i,x\rangle\le e^{a_i},\ 1\le i\le N\},
 \qquad\psi_{i,q,a}(x)=(e^{-a_i}\langle u_i,x\rangle_+)^q.
\]
For $\varepsilon>0$ and even $q\ge2n+20$ define
\begin{equation}\label{eq:soft-partition}
 \begin{split}
 Z_{q,\varepsilon}(a,\xi)
 &=\int_{\R^n}\exp\left(-\frac\varepsilon2|x|^2
                    -\sum_i\psi_{i,q,a}(x)-\xi\cdot x\right)\,dx,\\
 J_{q,\varepsilon}(a,\xi)&=\log Z_{q,\varepsilon}(a,\xi),\qquad
 H_{q,\varepsilon}(a)=\inf_\xi J_{q,\varepsilon}(a,\xi).
 \end{split}
\end{equation}
On a compact set of $a$, there are common $r,R>0$ such that
\begin{equation}\label{eq:polytope-balls}
 rB\subset Q(a)\subset RB,\qquad
 \max_i e^{-a_i}\langle u_i,x\rangle_+\ge |x|/R.
\end{equation}
The integrand and all fixed parameter derivatives are integrable locally uniformly in $(a,\xi)$. The logarithmic partition function is strictly convex in $\xi$, with gradient $-\E x$ and Hessian $\Cov(x)\succ0$. For $e=\xi/|\xi|$, integration on $B(-re/4,r/8)$ gives
\begin{equation}\label{eq:soft-coercivity}
 Z_{q,\varepsilon}(a,\xi)
 \ge |(r/8)B|\exp\left(\frac r8|\xi|-N2^{-q}-\frac{\varepsilon r^2}{8}\right).
\end{equation}
Thus the infimum is achieved by a unique $\xi_{q,\varepsilon}(a)$, which centers the normalized probability. The implicit function theorem makes the minimizing tilt and $H_{q,\varepsilon}$ smooth in $a$.

Let $a(s)=a+sf$ and take expectations with respect to its optimally tilted probability. Set
\[
 F_s=-q\sum_i f_i\psi_{i,q,a(s)},\qquad
 \Sigma_s=\E_s[xx^T],\qquad \beta_s=\Sigma_s^{-1}\E_s[xF_s].
\]
Differentiating the centering equation gives
\[
 0=-\Cov_s\big(x,F_s+\xi'(s)\cdot x\big),\qquad \xi'(s)=-\beta_s.
\]
The envelope derivative and its second derivative are therefore
\begin{align*}
 \frac{d}{ds}H_{q,\varepsilon}(a(s))&=-\E_sF_s,
 \\
 \frac{d^2}{ds^2}H_{q,\varepsilon}(a(s))
 &=\Var_s(F_s)-\E_s[xF_s]^T\Sigma_s^{-1}\E_s[xF_s]
   -q^2\sum_i f_i^2\E_s\psi_{i,q,a(s)}.
\end{align*}
The estimate \eqref{eq:variance}, with $b_i=-qf_i$, makes the displayed second derivative nonpositive. Hence $H_{q,\varepsilon}$ is concave. The covariance subtraction is exactly the Schur complement produced by optimizing the tilt.

Radial integration by parts gives
\begin{equation*}
 q\sum_i\E_s\psi_{i,q,a(s)}=n-\varepsilon\E_s|x|^2.
\end{equation*}
The tilt term vanishes because $\E_sx=0$. The tails in \eqref{eq:polytope-balls} justify the integration and imply $|\frac d{ds}H_{q,\varepsilon}(a(s))|\le n\|f\|_\infty$.

We next remove the soft constraints. For fixed normals and $a$, as $q\to\infty$ the integrand in \eqref{eq:soft-partition} tends almost everywhere to $\ind_{Q(a)}e^{-\varepsilon|x|^2/2-\xi\cdot x}$. This convergence passes through the minimization. Indeed, \eqref{eq:polytope-balls} bounds $Z_{q,\varepsilon}(a,0)$ uniformly on compact sets of $a$, for $q\ge q_0>1$ and $0\le\varepsilon\le1$. Formula \eqref{eq:soft-coercivity} bounds the minimizing tilts in a common compact set. For bounded $\xi$, an integrable majorant outside $RB$ is
\[
 \exp\big(M|x|-(|x|/R)^{q_0}\big),
\]
with a constant majorant inside. If $a_j\to a$ and $\xi_j\to\xi$, the same almost-everywhere limit holds off $\partial Q(a)$. Dominated convergence and compactness give locally uniform convergence of the integrals, their minima, and their unique minimizers. Letting next $\varepsilon\downarrow0$ yields
\begin{equation}\label{eq:soft-limit}
 H_{q,\varepsilon}(a)\longrightarrow\cP(Q(a))
 \quad\hbox{locally uniformly, first as }q\to\infty\hbox{ and then as }\varepsilon\downarrow0.
\end{equation}
Thus $a\mapsto\cP(Q(a))$ is concave.

Finally choose increasing finite sets $D_m\subset\Sn$, containing $\{\pm e_1,\ldots,\pm e_n\}$ and with dense union. For $a\in C(\Sn)$ put
\[
 Q_m(a)=\{x:\langle u,x\rangle\le e^{a(u)},\ u\in D_m\}.
\]
These bodies decrease to $W[e^a]$ and stay between fixed balls when $a$ ranges over a compact set in the uniform norm. Lemma~\ref{lem:tilt}, or the same compactness argument for minimizing tilts, gives
\[
 \cP(Q_m(a))\downarrow\cP(W[e^a])=\cH(a).
\]
Each function on the left is concave in $a$. Its limit is therefore concave on $C(\Sn)$.

For centered $K,L$, set $a_t=(1-t)\log h_K+t\log h_L$. The established concavity and \eqref{eq:tilt-volume} give
\begin{equation}\label{eq:volume-chain}
 \log|W[h_K^{1-t}h_L^t]|
 \ge\cH(a_t)\ge(1-t)\log|K|+t\log|L|.
\end{equation}
This proves the inequality in Theorem~\ref{thm:logbm}. The intermediate Wulff body has not been assumed centered. 

\end{proof}

\subsection{Equality and rigidity}\label{subsec:rigidity}
\begin{proof}[Proof of the equality statement in Theorem~\ref{thm:logbm}]\ 
	
Suppose equality holds in \eqref{eq:main-bm} at $t_0\in(0,1)$. Separate positive dilations reduce to $|K|=|L|=1$. The chain \eqref{eq:volume-chain} forces $\cH(a_{t_0})=0$, where $a_t=(1-t)\log h_K+t\log h_L$. Concavity and the zero endpoint values then give
\begin{equation}\label{eq:affine-hypothesis}
 \cH(a_t)=0\qquad(0\le t\le1).
\end{equation}
We show that \eqref{eq:affine-hypothesis} alone, for centered unit-volume endpoints, implies \eqref{eq:main-split} with $\prod_i a_i^{\dim E_i}=1$. In particular, none of the following steps assumes that an intermediate body is centered or has volume one.

Set $f=\log(h_L/h_K)$ and $C_t=W[e^{a_t}]$. We first obtain
\begin{equation}\label{eq:common-measure}
 V_K=V_L=\omega_{C_t},\qquad
 \int_{\Sn} f\,dV_K=0\qquad(0<t<1),
\end{equation}
together with a common support for the three surface area measures and
\begin{equation*}
 \log h_{C_t}=a_t\quad\text{on }\supp V_K.
\end{equation*}
The derivative at $a_0$ is the positive continuous linear functional
\[
 \ell(z)=D\cH(\log h_K)[z]=n\int z\,dV_K.
\]
By concavity, it is a global supergradient of $\cH$. The constant value along the segment gives $\ell(f)=0$, so the same supporting affine functional touches $\cH$ at every $a_t$, including $a_1$. Put $k_t=\log h_{C_t}$. Then $k_t\le a_t$ and $\cH(k_t)=\cH(a_t)=0$. The supergradient inequality and positivity give
\[
 0\le\ell(k_t-a_t)\le0.
\]
Thus $\ell$ also supports $\cH$ at $k_t$. The two-sided derivative in Lemma~\ref{lem:H-variation} makes this supergradient unique. Applying that formula at $k_t$ and $a_1$ proves \eqref{eq:common-measure}. The weights in \eqref{eq:omega} are positive, so the normal supports agree. Finally $a_t-k_t$ is continuous, nonnegative, and has zero $V_K$-integral, which proves the contact assertion.

Use the direction sets $D_m$ constructed in Section~\ref{subsec:concavity} and put $Q_m(t)=Q_m(a_t)$. Common inner and outer balls $rB,RB$ contain all these bodies. Moreover,
\[
 e^{-\|f\|_\infty|s-t|}Q_m(t)\subset Q_m(s)
 \subset e^{\|f\|_\infty|s-t|}Q_m(t).
\]
The continuous functions $t\mapsto\cP(Q_m(t))$ decrease to zero. Dini's theorem makes the convergence uniform. Fix $0<\varepsilon_j\downarrow0$. Choose increasing $m_j$ so that
\[
 0\le\cP(Q_{m_j}(t))\le\varepsilon_j\qquad(0\le t\le1).
\]
On $Q_{m_j}(t)\subset RB$, inserting the factor $e^{-\varepsilon_j|x|^2/2}$ changes the optimized logarithmic integral by at most $\varepsilon_jR^2/2$. After $m_j$ has been fixed, choose even $q_j\to\infty$ large enough that the soft-constraint error is uniformly at most $\varepsilon_j$, and also
\begin{equation}\label{eq:choice-q}
 q_j\ge2n+20,\qquad q_j\ge j(1+\log(N_j+1)),\qquad N_j=|D_{m_j}|.
\end{equation}
By the locally uniform limit in \eqref{eq:soft-limit}, these choices are possible. The smooth concave functions
\[
 G_j(t)=H_{q_j,\varepsilon_j}\big((a_t(u))_{u\in D_{m_j}}\big)
\]
satisfy $\sup_{[0,1]}|G_j|\le C\varepsilon_j$. Secant bounds on the middle interval give
\[
 \int_{1/3}^{2/3}-G_j''(t)\,dt
 =G_j'(1/3)-G_j'(2/3)\le C_1\varepsilon_j.
\]
Choose $t_j\in[1/3,2/3]$ with $-G_j''(t_j)\le C_2\varepsilon_j$. The same secant bounds give $|G_j'(t_j)|\le C_3\varepsilon_j$. Passing to a subsequence, let $t_j\to t_*\in[1/3,2/3]$ and put $C=C_{t_*}$.

Write $D_{m_j}=\{p_{1,j},\ldots,p_{N_j,j}\}$ and, at $t_j$, define
\begin{equation}\label{eq:equality-regularization}
 \begin{gathered}
 \psi_{i,j}(x)=\left(\frac{\langle p_{i,j},x\rangle_+}{e^{a_{t_j}(p_{i,j})}}\right)^{q_j},\qquad
 V_j(x)=\frac{\varepsilon_j}2|x|^2+\sum_i\psi_{i,j}(x)+\xi_j\cdot x,\\
 Z_j=\int e^{-V_j}=e^{G_j(t_j)},\qquad d\mu_j=Z_j^{-1}e^{-V_j}\,dx,
 \end{gathered}
\end{equation}
where $\xi_j$ is the minimizing tilt. These probabilities are centered.

The common ball estimates, \eqref{eq:soft-coercivity}, and \eqref{eq:choice-q} bound the tilts uniformly. On every compact subset of $\intt C$, the normalized support ratios in \eqref{eq:equality-regularization} are at most some fixed $\rho<1$ for large $j$. For each fixed $k$, the derivatives of their sum are bounded by $C_kN_jq_j^k\rho^{q_j-k}\to0$. Outside $C$, a fixed direction from the dense union strictly violates the limiting support constraint; its corresponding power tends to infinity. Common coordinate constraints give an integrable majorant of the form $\exp(M|x|-(|x|/R_0)^{q_0})$ off a fixed ball, with $q_0>1$. Dominated convergence, including every fixed polynomial weight, therefore shows that any subsequential tilt limit centers $\ind_Ce^{-\xi\cdot x}$. Lemma~\ref{lem:tilt} identifies it as $\xi_C$. Consequently,
\begin{equation}\label{eq:equality-target-limit}
 \xi_j\to\xi:=\xi_C,\qquad Z_j\to1,\qquad
 d\mu_j\to d\mu:=\ind_Ce^{-\xi\cdot x}\,dx.
\end{equation}
The same dominated convergence with weight $1+|x|$ gives
\[
 \int_{\R^n}(1+|x|)\big|Z_j^{-1}e^{-V_j(x)}-\ind_C(x)e^{-\xi\cdot x}\big|\,dx\longrightarrow0.
\]
In particular, the density convergence is in $L^1$ and $W_1(\mu_j,\mu)\to0$: couple the common part of the densities identically and bound the transport cost of the remaining equal masses through the origin by the $|x|$-weighted difference. All fixed polynomial moments converge, and $V_j\to\xi\cdot x$ locally in every fixed differentiability class on $\intt C$. The limiting measure is a centered probability, although the uniform measure on $C$ need not yet be centered.

Set
\[
 F_j=-q_j\sum_i f(p_{i,j})\psi_{i,j},\quad
 \Sigma_j=\int xx^T\,d\mu_j,\quad
 \beta_j=\Sigma_j^{-1}\int xF_j\,d\mu_j,\quad
 U_j=F_j-\int F_j\,d\mu_j-\beta_j\cdot x.
\]
The Schur-complement formula and the parameter choice give
\begin{equation*}
 D_j:=q_j^2\sum_i f(p_{i,j})^2\int\psi_{i,j}\,d\mu_j
       -\int U_j^2\,d\mu_j
 =-G_j''(t_j)\le C_2\varepsilon_j,
\end{equation*}
and
\begin{equation}\label{eq:F-mean}
 \left|\int F_j\,d\mu_j\right|=|G_j'(t_j)|\le C_3\varepsilon_j.
\end{equation}
By Lemma~\ref{lem:stability}, the moment potentials may be translated so that $\varphi_j\to\varphi$ locally in every fixed $C^k$, where $\varphi$ is the moment potential of $\mu$. Put $x_j=\nabla\varphi_j$ and $\tau_j=D^2\varphi_j$. On each source compact set, $\tau_j$ and $\tau_j^{-1}$ are uniformly bounded, and $e^{-\varphi_j}$ is bounded above and below by positive constants.

The approximation statement \eqref{eq:retained-fields} gives tests $v_j\in C_c^\infty$ whose fields satisfy
\[
 \|u_j-U_j\|_{L^2(\mu_j)}\le\varepsilon_j,\qquad
 \mathcal T(v_j)+\varepsilon_j\int|W_j|^2\,d\mu_j
 \le D_j+\varepsilon_j^2\le C_4\varepsilon_j.
\]
Thus
\begin{equation}\label{eq:uniform-flux}
 \mathcal T(v_j)\to0,\qquad \sup_j\int|W_j|^2\,d\mu_j<\infty.
\end{equation}
The retained quadratic regularizer is essential for the second bound. In source coordinates, \eqref{eq:derivative-remainder} and \eqref{eq:derivative-comparison} imply
\begin{equation*}
 \int_{B_R}|D_zP_j|^2\,dz\longrightarrow0\qquad(R<\infty).
\end{equation*}
Let $A_j$ be the average of $P_j$ over $B_1$. Poincar\'e's inequality and the identity
\begin{equation*}
 W_j=P_jx_j-\diver_zP_j
\end{equation*}
give $P_j-A_j\to0$ and $W_j-A_jx_j\to0$ in $L^2(B_1)$. The global bound in \eqref{eq:uniform-flux} bounds $W_j$ there, since the source density is bounded below. Also
\[
 \int_{B_1}x_jx_j^T\,dz\longrightarrow
 \int_{B_1}(\nabla\varphi)(\nabla\varphi)^T\,dz\succ0.
\]
The last matrix is positive definite because the gradient image of $B_1$ is open. Therefore $A_j$ is bounded. After a subsequence, $A_j\to A$. Applying Poincar\'e's inequality on larger balls and comparing their averages on $B_1$, we obtain
\begin{equation*}
 P_j\to A,\qquad W_j\to A\nabla\varphi
 \quad\hbox{in }L^2_\loc(\R^n_z).
\end{equation*}
The convergent moment maps and their inverses give $W_j(x)\to Ax$ in $L^2_\loc(\intt C)$. Since $B_j=P_j\tau_j$ is symmetric, passage to the limit gives
\begin{equation}\label{eq:matrix-commutation}
 AD^2\varphi(z)=D^2\varphi(z)A^T\qquad(z\in\R^n).
\end{equation}

Define a positive measure on positions and directions by
\begin{equation}\label{eq:joint-layer}
 d\Lambda_j(x,u)=\sum_i q_j\psi_{i,j}(x)\,d\mu_j(x)\,\delta_{p_{i,j}}(du).
\end{equation}
The direction is $p_{i,j}$ itself, because the powers are one-sided. Radial integration by parts gives
\begin{align*}
 \Lambda_j(\R^n\times\Sn)&=n-\varepsilon_j\int|x|^2\,d\mu_j,
 \\
 \int|x|^2\,d\Lambda_j
 &=(n+2)\int|x|^2\,d\mu_j-\varepsilon_j\int|x|^4\,d\mu_j
   -\int|x|^2\xi_j\cdot x\,d\mu_j.
\end{align*}
The common moment bounds imply bounded mass and second moment, hence tightness and uniform integrability of the first moments. For a compact smooth function $g$,
\begin{align*}
 \int g(x)\,d\Lambda_j(x,u)
 &=\int\big(ng+x\cdot\nabla g-\varepsilon_jg|x|^2-g\xi_j\cdot x\big)\,d\mu_j\\
 &\longrightarrow\int_C(ng+x\cdot\nabla g-g\xi\cdot x)e^{-\xi\cdot x}\,dx\\
 &=\int_{\partial C}g(x)h_C(\nu_C)e^{-\xi\cdot x}\,d\mathcal H^{n-1}(x).
\end{align*}
Thus the limiting position marginal lies on $\partial C$.

Every joint limit is supported on contact pairs
\begin{equation*}
 x\in\partial C,\qquad \langle x,u\rangle=h_C(u)=e^{a_{t_*}(u)}.
\end{equation*}
Indeed, on a compact neighborhood where $\langle x,u\rangle\le(1-\delta)e^{a_{t_*}(u)}$, the mass in \eqref{eq:joint-layer} is at most $N_jq_j(1-\delta/2)^{q_j}$ for large $j$, and tends to zero by \eqref{eq:choice-q}. On the limiting boundary one always has $\langle x,u\rangle\le h_C(u)\le e^{a_{t_*}(u)}$. This proves the asserted restriction to contact pairs. The outer normal is unique at almost every boundary point, and the position marginal gives no mass to the exceptional set. Hence the joint limit is unique:
\begin{equation*}
 d\Lambda_j(x,u)\rightharpoonup
 h_C(\nu_C(x))e^{-\xi\cdot x}\,d\mathcal H^{n-1}(x)\,
 \delta_{\nu_C(x)}(du).
\end{equation*}
Since $F_j\,d\mu_j=-\int f(u)\,d\Lambda_j(x,u)$ and $f$ is continuous,
\[
 F_j\,d\mu_j\rightharpoonup
 -f(\nu_C)h_C(\nu_C)e^{-\xi\cdot x}\,d\mathcal H^{n-1}|_{\partial C}.
\]
The first-moment control permits linear-growth tests, so $\int xF_j\,d\mu_j$ converges. Because $\Sigma_j\to\Sigma=\int_Cxx^T\,d\mu\succ0$, the vectors $\beta_j$ converge to some $\beta\in\R^n$. The approximation of $U_j$ and \eqref{eq:F-mean} now give
\begin{equation}\label{eq:u-limit}
 u_j\,d\mu_j\rightharpoonup
 -f(\nu_C)h_C(\nu_C)e^{-\xi\cdot x}\,d\mathcal H^{n-1}|_{\partial C}
 -\beta\cdot x\,d\mu(x).
\end{equation}
The interior affine term is retained; it need not vanish separately.

For $g\in C_c^\infty(\R^n)$, adjunction gives
\[
 \int u_jg\,d\mu_j=\int W_j\cdot\nabla g\,d\mu_j.
\]
The right side tends to $\int_CAx\cdot\nabla g\,d\mu$. To justify passage across the boundary, first restrict to $C_\delta=\{x\in C:\operatorname{dist}(x,\partial C)\ge\delta\}$, where local convergence applies. The absolute value of the remaining integral is at most
\[
 \|\nabla g\|_\infty
 \left(\int|W_j|^2\,d\mu_j\right)^{1/2}
 \mu_j(\R^n\setminus C_\delta)^{1/2}.
\]
Its limit superior tends to zero as $\delta\downarrow0$, by \eqref{eq:uniform-flux} and the $L^1$ convergence in \eqref{eq:equality-target-limit}. Hence \eqref{eq:u-limit} implies
\begin{equation*}
 \int_C Ax\cdot\nabla g\,d\mu
 =-\int_{\partial C}gf(\nu_C)h_C(\nu_C)e^{-\xi\cdot x}\,d\mathcal H^{n-1}
   -\int_Cg\beta\cdot x\,d\mu.
\end{equation*}
On the other hand, weighted integration by parts in $C$ gives
\[
 \int_C Ax\cdot\nabla g\,d\mu
 =\int_{\partial C}gAx\cdot\nu_Ce^{-\xi\cdot x}\,d\mathcal H^{n-1}
  +\int_Cg\big((A^T\xi)\cdot x-\tr A\big)\,d\mu.
\]
Testing first inside $C$ yields an affine identity on an open set, and therefore
\begin{equation}\label{eq:affine-flux}
 \tr A=0,\qquad \beta=-A^T\xi.
\end{equation}
Subtracting the interior terms proves
\begin{equation}\label{eq:boundary-flux}
 Ax\cdot\nu_C(x)=-f(\nu_C(x))h_C(\nu_C(x))
 \quad\hbox{for }\mathcal H^{n-1}\hbox{-almost every }x\in\partial C.
\end{equation}
Here restrictions of compact smooth tests are dense in $C(\partial C)$, so the weak identity is an identity of finite boundary measures. No conclusion that $\xi=0$ has been used.

Equation \eqref{eq:matrix-commutation} makes $A$ self-adjoint for a positive definite inner product. For example, if $\tau_0=D^2\varphi(0)$, then $\tau_0^{-1/2}A\tau_0^{1/2}$ is symmetric. Use the constant dual change
\[
 z=Tz',\quad x'=T^Tx,\qquad
 T=(\det\tau_0)^{1/(2n)}\tau_0^{-1/2},\qquad\det T=1.
\]
It makes the transformed $A$ symmetric. The common normal support and the flux identity are preserved: the flux and the supporting distance transform with the same normalizing factor, as in \eqref{eq:covariance}. We henceforth work in these coordinates and omit primes.

Let $\lambda_1,\ldots,\lambda_r$ be the distinct eigenvalues of $A$, with orthogonal eigenspaces $E_i$. Equation \eqref{eq:matrix-commutation} says that $D^2\varphi$ commutes with $A$. All mixed Hessian blocks between distinct eigenspaces vanish. Since $\varphi$ is defined on the whole product space, integration gives
\[
 \varphi(z_1+\cdots+z_r)=c+\sum_i\varphi_i(z_i),\qquad z_i\in E_i.
\]
Its gradient image is a Cartesian product. Taking closures, we obtain
\[
 C=C_1\oplus\cdots\oplus C_r,\qquad C_i\subset E_i.
\]
Each factor has nonempty relative interior and contains zero in that interior. The boundary of a product, or Lemma~\ref{lem:product}, gives
\[
 \supp S_C\subset\bigcup_i S(E_i).
\]
Let $S$ be the common support of $S_C,S_K,S_L$ in \eqref{eq:common-measure}, and put $S_i=S\cap E_i$. Reconstruction in Lemma~\ref{lem:reconstruct} separates the constraints and yields
\[
 K=\bigoplus_iK_i,\qquad L=\bigoplus_iL_i,
\]
where
\[
 K_i=\{y\in E_i:\langle y,u\rangle\le h_K(u),\ u\in S_i\},\qquad
 L_i=\{y\in E_i:\langle y,u\rangle\le h_L(u),\ u\in S_i\}.
\]
These factors are bounded and full-dimensional relative to $E_i$, since their products are the compact full-dimensional endpoint bodies. Fubini's theorem gives $g(K_i)=g(L_i)=0$, because the endpoint centroids vanish.

At almost every boundary point of $C$ with normal $u\in E_i$, one has $Ax\cdot u=\lambda_i h_C(u)$. Thus \eqref{eq:boundary-flux} gives $f(u)=-\lambda_i$ for $S_C$-almost every such direction. Continuity extends this equality to $S_i$. It follows that
\[
 h_L(u)=e^{-\lambda_i}h_K(u)\quad(u\in S_i),\qquad
 L_i=e^{-\lambda_i}K_i.
\]
Set $a_i=e^{-\lambda_i}$. By \eqref{eq:affine-flux},
\[
 \prod_i a_i^{\dim E_i}=\exp\left(-\sum_i\lambda_i\dim E_i\right)
 =e^{-\tr A}=1.
\]
Undoing the common linear transformation gives complementary subspaces without an orthogonality requirement. Thus \eqref{eq:affine-hypothesis} implies the claimed common direct-sum representation. In particular, it proves necessity for the normalized equality pair in Theorem~\ref{thm:logbm}. Restoring the original endpoint volumes gives arbitrary positive dilation factors.

Conversely, assume \eqref{eq:main-split}. By \eqref{eq:covariance} we may make the subspaces orthogonal. For $u=\sum_i u_i$, $u_i\in E_i$, put $b_i=h_{K_i}(u_i)\ge0$. H\"older's inequality gives
\[
 \sum_i a_i^s b_i\le\left(\sum_i b_i\right)^{1-s}
                        \left(\sum_i a_i b_i\right)^s,
 \qquad 0<s<1.
\]
Hence $\bigoplus_i a_i^sK_i\subset W[h_K^{1-s}h_L^s]$. The defining inequalities in directions confined to $E_i$ give the reverse inclusion. Taking volumes,
\[
 \big|W[h_K^{1-s}h_L^s]\big|=|K|\prod_i a_i^{s\dim E_i}
 =|K|^{1-s}|L|^s.
\]
Thus the direct-sum condition is sufficient, and the proof is complete.
\end{proof}

\section{Proofs of Theorems 1.2 and 1.3}\label{sec:proof-m-uniqueness}\ 

The first variation gives the logarithmic Minkowski inequality from the endpoint volume inequality. Its equality case uses the implication from \eqref{eq:affine-hypothesis} established in Section~\ref{subsec:rigidity}, rather than a concavity assertion for the ordinary volumes of intermediate, possibly noncentered bodies.

\begin{proof}[Proof of Theorem~\ref{thm:logm}]
Let $K,L\in\cK$ and set
\[
 f=\log(h_L/h_K),\qquad a_t=\log h_K+tf,\qquad C_t=W[e^{a_t}].
\]
For $t>0$, Theorem~\ref{thm:logbm} gives
\[
 \frac{\log|C_t|-\log|K|}{t}\ge\log\frac{|L|}{|K|}.
\]
Aleksandrov's formula \eqref{eq:aleksandrov} at $C_0=K$ yields
\[
 \left.\frac{d}{dt}\right|_{t=0}\log|C_t|
 =\frac n{|K|}\int_{\Sn}f\,dV_K.
\]
Letting $t\downarrow0$ proves \eqref{eq:main-m}. Only the two original endpoints are used in the application of Theorem~\ref{thm:logbm}; no intermediate centroid condition is required.

Suppose equality holds. Put $\Phi(t)=\cH(a_t)$. Concavity \eqref{eq:H-concavity}, Lemma~\ref{lem:tilt}, and the variation formula \eqref{eq:H-variation} give
\[
 \Phi(0)=\log|K|,\qquad \Phi(1)=\log|L|,
 \qquad \Phi'(0)=\frac n{|K|}\int f\,dV_K.
\]
Equality in \eqref{eq:main-m} says $\Phi'(0)=\Phi(1)-\Phi(0)$. The chord and tangent inequalities for the concave function $\Phi$ imply
\[
 (1-t)\Phi(0)+t\Phi(1)\le\Phi(t)\le\Phi(0)+t\Phi'(0).
\]
The two bounds coincide, so
\begin{equation*}
 \cH(a_t)=(1-t)\log|K|+t\log|L|\qquad(0\le t\le1).
\end{equation*}
Normalize the endpoints by $K_0=|K|^{-1/n}K$ and $L_0=|L|^{-1/n}L$. Their logarithmic support segment is
\[
 a_t^0=a_t-\frac{1-t}{n}\log|K|-\frac t n\log|L|.
\]
The scaling identity \eqref{eq:H-basic} and the preceding affine equality give $\cH(a_t^0)=0$. The affine-segment argument in Section~\ref{subsec:rigidity} therefore applies to $K_0,L_0$. Restoring their scales gives the representation \eqref{eq:main-split} for $K,L$.

Conversely, suppose \eqref{eq:main-split} holds and put $d_i=\dim E_i$. The volume identity established in the proof of Theorem~\ref{thm:logbm} gives
\[
 |C_t|=|K|\prod_i a_i^{td_i},\qquad
 \frac{|L|}{|K|}=\prod_i a_i^{d_i}.
\]
Differentiate the first identity at zero and use \eqref{eq:aleksandrov}. Then
\[
 \int_{\Sn}\log\frac{h_L}{h_K}\,dV_K
 =\frac{|K|}{n}\sum_i d_i\log a_i
 =\frac{|K|}{n}\log\frac{|L|}{|K|}.
\]
Thus the stated condition is also sufficient.
\end{proof}

\begin{proof}[Proof of Theorem~\ref{thm:uniqueness}]
Suppose first that $\vbar_K=\vbar_L=\mu$. Divide the logarithmic Minkowski inequality by the volume of its first body, and apply it in both orders. We obtain
\[
 \int_{\Sn}\log\frac{h_L}{h_K}\,d\mu
 \ge\frac1n\log\frac{|L|}{|K|},\qquad
 \int_{\Sn}\log\frac{h_K}{h_L}\,d\mu
 \ge\frac1n\log\frac{|K|}{|L|}.
\]
The two deficits are nonnegative and sum to zero. Both inequalities are therefore equalities, and Theorem~\ref{thm:logm} gives \eqref{eq:main-split}. Conversely, every pair with this representation has equal normalized cone-volume measures by Lemma~\ref{lem:product}.
\end{proof}

In particular, a cone-volume measure with full support on $\Sn$ has at most one centered realization: a nontrivial direct sum would place its support in a finite union of proper normal subspaces by \eqref{eq:normal-subspaces}. This consequence concerns uniqueness of an existing centered realization and makes no assertion about the existence of one.

\section{Proofs of Theorems 1.4 and 1.5}\label{sec:lp-proofs}\ 

Throughout this section, $K,L\in\mathcal K^n_o$ unless centering is explicitly imposed. For $a,b>0$ write
\[
 M_{p,t}(a,b)=((1-t)a^p+tb^p)^{1/p}\quad(p\ne0),
 \qquad M_{0,t}(a,b)=a^{1-t}b^t.
\]
Thus $C_{p,t}(K,L)=W[M_{p,t}(h_K,h_L)]$. We use the scalar normalization argument from the $L_p$ theory \cite{BLYZ12,KM22}, now for the optimized functional $\Psi$. Strictness will follow from Lemma~\ref{lem:reconstruct} and the strict monotonicity in Lemma~\ref{lem:tilt}; no logarithmic equality classification is needed.

\begin{proof}[Proof of Theorem~\ref{thm:lp-bm}]
The endpoint cases $t=0,1$ are immediate. Assume $0<t<1$. Set
\[
 A=\Psi(K)^{1/n},\qquad B=\Psi(L)^{1/n},\qquad
 r=\big((1-t)A^p+tB^p\big)^{1/p},\qquad
 \lambda=\frac{tB^p}{r^p}.
\]
Then $0<\lambda<1$ and $1-\lambda=(1-t)A^p/r^p$.  Put
$K_0=A^{-1}K$ and $L_0=B^{-1}L$, so that
$\Psi(K_0)=\Psi(L_0)=1$.  The scalar weighted arithmetic--geometric
mean inequality gives, pointwise on the sphere,
\begin{equation*}
 M_{p,t}(h_K,h_L)
   =r\,M_{p,\lambda}(h_{K_0},h_{L_0})
   \geq r\,h_{K_0}^{1-\lambda}h_{L_0}^{\lambda}.
\end{equation*}
Consequently, with
\[
 D=W[h_{K_0}^{1-\lambda}h_{L_0}^{\lambda}],\qquad
 Q=r^{-1}C_{p,t}(K,L)
   =W[M_{p,\lambda}(h_{K_0},h_{L_0})],
\]
one has $D\subset Q$.  Concavity of $\cH$ gives
$\Psi(D)\geq1$, and monotonicity gives $\Psi(Q)\geq1$.
Homogeneity now proves \eqref{eq:lp-weighted-BM}.

Suppose equality holds.  Then $\Psi(Q)=1$, and the preceding chain
forces $\Psi(D)=1$.  Strict monotonicity implies $D=Q$.  Apply
Lemma~\ref{lem:reconstruct} to the two descriptions of this same
Wulff body.  On $\supp S_Q$,
\[
 h_{K_0}^{1-\lambda}h_{L_0}^{\lambda}
       =h_Q=M_{p,\lambda}(h_{K_0},h_{L_0}).
\]
Equality in the scalar arithmetic--geometric mean inequality, with
$p>0$ and $0<\lambda<1$, implies
\[
 h_{K_0}=h_{L_0}=h_Q
                    \quad\text{on }\supp S_Q.
\]
The reconstruction formula \eqref{eq:reconstruct},
applied to $Q$, gives $K_0,L_0\subset Q$.  All three bodies have
$\Psi$-value one, so strict monotonicity yields $K_0=L_0=Q$.
Thus $L=(B/A)K$.

Conversely, if $L=cK$, then
$C_{p,t}(K,L)=M_{p,t}(1,c)K$, and homogeneity verifies equality.

For centered endpoints, \eqref{eq:tilt-volume} and the optimized inequality just proved give
\[
 |C_{p,t}(K,L)|^{p/n}
 \geq\Psi(C_{p,t}(K,L))^{p/n}
 \geq(1-t)|K|^{p/n}+t|L|^{p/n}.
\]
If \eqref{eq:lp-centered-BM} is an equality, equality holds throughout
this chain, and the optimized equality characterization gives $L=cK$.
Conversely, such a dilation gives equality directly.

Finally, the scalar arithmetic--geometric mean inequality gives
\[
 \big((1-t)|K|^{p/n}+t|L|^{p/n}\big)^{n/p}
             \geq |K|^{1-t}|L|^t,
\]
with equality precisely when $|K|=|L|$.  Equality in
\eqref{eq:lp-centered-geometric} therefore requires both $L=cK$ and
$|K|=|L|$, hence $c=1$.  The converse is immediate.
\end{proof}

The same normalization works for arbitrary positive continuous constraint functions, not only support functions. Indeed, for $a,b>0$ on $\Sn$, normalize the bodies $W[a^{1/p}]$ and $W[b^{1/p}]$ by their $\Psi$-values and apply \eqref{eq:H-concavity} directly to the logarithms of the normalized constraint functions. The pointwise arithmetic--geometric mean inequality then gives
\begin{equation*}
 a\longmapsto\Psi(W[a^{1/p}])^{p/n}
 \quad\text{concave on }\{a\in C(\Sn):a>0\}.
\end{equation*}
This is a statement about functional values; the equality assertion in Theorem~\ref{thm:lp-bm} concerns bodies represented by their actual support functions.

\begin{proof}[Proof of Theorem~\ref{thm:lp-minkowski}]
Concavity of $\cH$ and its first variation
\eqref{eq:H-variation}, applied at $\log h_K$ in the
direction $\log(h_L/h_K)$, give
\begin{equation}\label{eq:lp-weighted-log-minkowski}
 \int\log\frac{h_L}{h_K}\,d\omega_K
      \geq\frac1n\log\frac{\Psi(L)}{\Psi(K)}.
\end{equation}
Since $\omega_K$ is a probability measure, Jensen's inequality yields
\begin{align*}
 \int\left(\frac{h_L}{h_K}\right)^p\,d\omega_K
 &\geq\exp\left(p\int\log\frac{h_L}{h_K}\,d\omega_K\right)
 \geq\left(\frac{\Psi(L)}{\Psi(K)}\right)^{p/n}.
\end{align*}
Multiplying by $\Psi(K)$ proves \eqref{eq:lp-weighted-minkowski}.

If equality holds, the two preceding inequalities are
equalities.  Strict convexity of $s\mapsto e^{ps}$ implies that
$h_L/h_K$ is constant $\omega_K$-almost everywhere.  Its value must be
\[
 c=\left(\frac{\Psi(L)}{\Psi(K)}\right)^{1/n}.
\]
Continuity and equality of the supports of $\omega_K$ and $S_K$ give
$h_L=c h_K$ on $\supp S_K$.  Hence
$L\subset cK$ by \eqref{eq:lp-reconstruction-consequence}.  Homogeneity
gives $\Psi(cK)=\Psi(L)$, and strict monotonicity in
Lemma~\ref{lem:tilt} forces $L=cK$.

Conversely, if $L=cK$, then
$\widetilde V_p(K,L)=c^p\Psi(K)$, which is exactly the right-hand side
of \eqref{eq:lp-weighted-minkowski}.

For centered $K,L$, the minimizing tilts vanish. Consequently $\Psi(K)=|K|$, $\Psi(L)=|L|$, and $\widetilde V_p(K,L)=V_p(K,L)$. This proves \eqref{eq:lp-centered-minkowski}, with the same equality characterization.

It remains to prove the assertion about $L_p$ surface area measures. Suppose the two measures agree.  By \eqref{eq:lp-standard},
\[
 V_p(K,L)=\frac1n\int h_L^p\,dS_p(K)
         =\frac1n\int h_L\,dS_L=|L|,
 \qquad V_p(L,K)=|K|.
\]
Writing $\theta=1-p/n$, the two applications of
\eqref{eq:lp-centered-minkowski} give
\[
 |L|^\theta\geq |K|^\theta,
 \qquad |K|^\theta\geq |L|^\theta.
\]
If $p\neq n$, then $\theta\neq0$ and hence $|K|=|L|$.
Equality holds in \eqref{eq:lp-centered-minkowski}, so $L=cK$;
equality of volume gives $c=1$.

If $p=n$, the identity $V_n(K,L)=|L|$ is already equality in
\eqref{eq:lp-centered-minkowski}, and therefore $L=cK$.
Conversely, scaling surface area and support functions gives
\[
 S_p(cK,\cdot)=c^{n-p}S_p(K,\cdot).
\]
Thus every positive dilation preserves $S_n$, as asserted.  The converse
when $p\neq n$ and $K=L$ is immediate.
\end{proof}

As $p\downarrow0$, the logarithms of the constraint functions converge
uniformly:
\[
 \log M_{p,t}(h_K,h_L)
       \longrightarrow(1-t)\log h_K+t\log h_L.
\]
The $n$-Lipschitz estimate for $\cH$ therefore passes the limit
through the left-hand side of the logarithmic form of
\eqref{eq:lp-weighted-BM}.  Since
\[
 \frac np\log\big((1-t)e^{p\cP(K)/n}
                         +te^{p\cP(L)/n}\big)
       \longrightarrow(1-t)\cP(K)+t\cP(L),
\]
the limiting statement is exactly
\begin{equation*}
 \Psi(W[h_K^{1-t}h_L^t])
       \geq\Psi(K)^{1-t}\Psi(L)^t.
\end{equation*}

For the mixed-volume inequality the nontrivial limit is the first-order
term, rather than the value at $p=0$.  Indeed,
\begin{align*}
 \frac{\widetilde V_p(K,L)}{\Psi(K)}
   &=1+p\int\log\frac{h_L}{h_K}\,d\omega_K+O(p^2),\\
 \left(\frac{\Psi(L)}{\Psi(K)}\right)^{p/n}
   &=1+\frac pn\log\frac{\Psi(L)}{\Psi(K)}+O(p^2).
\end{align*}
The logarithm of the support-function ratio is bounded, so these
expansions follow directly under the integral.  Subtracting one and
dividing by $p>0$ recovers
\eqref{eq:lp-weighted-log-minkowski}.  By contrast,
$\widetilde V_0(K,L)=\Psi(K)$ carries no information about $L$.
For centered bodies the first-order limit is the logarithmic Minkowski
inequality
\begin{equation*}
 \int\log\frac{h_L}{h_K}\,d\vbar_K
                  \geq\frac1n\log\frac{|L|}{|K|}.
\end{equation*}

The dilation-only equality statements for $p>0$ do not persist at $p=0$: strictness of the scalar power-mean comparison disappears. The equality cases in Theorem~\ref{thm:logbm} and the product formula \eqref{eq:product-normalized} give logarithmic equality for independent dilations of common direct summands, even when the dilation factors differ. This is precisely the additional freedom classified in Theorems~\ref{thm:logbm}--\ref{thm:uniqueness}.

For completeness, the following negative-exponent examples show why the universal volume and mixed-volume extensions stop at $p=0$. The rectangular-box obstruction is also recorded in \cite[Section 1]{KM22}. Fix $p<0$. The proposed volume extension would read
\begin{equation}\label{eq:lp-negative-BM-claim}
 |C_{p,t}(K,L)|\ge
 \big((1-t)|K|^{p/n}+t|L|^{p/n}\big)^{n/p}.
\end{equation}
Let $B_\infty^n=[-1,1]^n$ and choose $s\neq0$.  Set
\[
 K_s=\operatorname{diag}(e^s,e^{-s},1,\ldots,1)B_\infty^n,
 \qquad L_s=K_{-s}.
\]
Both volumes equal $2^n$.  For $t=1/2$, put
\[
 r=\left(\frac{e^{ps}+e^{-ps}}2\right)^{1/p}
   =\cosh(ps)^{1/p}<1.
\]
The defining constraints in the coordinate directions alone imply
\[
 C_{p,1/2}(K_s,L_s)
       \subset[-r,r]^2\times[-1,1]^{n-2}.
\]
Its volume is therefore at most $2^n r^2<2^n$, whereas the right-hand
side of \eqref{eq:lp-negative-BM-claim} is $2^n$.  This disproves \eqref{eq:lp-negative-BM-claim}.  Notice that when $p<0$, raising the volume inequality
\eqref{eq:lp-negative-BM-claim} to the power $p/n$ reverses its
inequality sign.

For the mixed-volume lower bound, take
\[
 K=B_\infty^n,\qquad
 L=(n!)^{1/n}B_1^n,
 \qquad B_1^n=\left\{x:\sum_{i=1}^n|x_i|\leq1\right\}.
\]
The volumes again equal $2^n$.  The surface area measure of $K$ is
supported on the $2n$ coordinate directions, each with mass $2^{n-1}$,
and $h_K(\pm e_i)=1$, $h_L(\pm e_i)=(n!)^{1/n}$.  Consequently
\[
 \frac{V_p(K,L)}{|K|}=(n!)^{p/n}<1,
\]
contradicting the proposed lower bound.  In dimension two this is the
square and the equal-area diamond
$\{|x_1|+|x_2|\leq\sqrt2\}$.

For the mixed-volume upper bound, keep $K=B_\infty^n$ and take
$L=K_s$.  These bodies also have equal volume, but
\[
 \frac{V_p(K,L)}{|K|}
       =\frac{e^{ps}+e^{-ps}+n-2}{n}
       =\frac{2\cosh(ps)+n-2}{n}>1.
\]
Thus the upper bound fails as well.  Every body used in the proof is
origin-symmetric and hence centered; its optimizing exponential tilt is
zero.  The same examples therefore also rule out the corresponding
negative-exponent extensions with $\Psi$ and $\widetilde V_p$.

\paragraph{Authors' note.}
Since 2018, the first two authors—then fourth-year undergraduate students—have been studying the logarithmic Brunn–Minkowski inequality and the non-symmetric logarithmic Minkowski problem under the supervision of the third author. Over the subsequent eight years, we continued to think about these problems and obtained some results concerning the existence and uniqueness of solutions to the  non-symmetric logarithmic Minkowski problem.

After OpenAI released a preprint giving a proof of the logarithmic Brunn–Minkowski inequality for origin-symmetric convex bodies, the second author, with the assistance of GPT, obtained a complete characterization of the equality cases together with a proof. We then provided GPT with both the proof of the origin-symmetric logarithmic Brunn–Minkowski inequality and the argument for its equality cases, and asked it to investigate the logarithmic Brunn–Minkowski conjecture for general convex bodies with centroid at the origin. Approximately half an hour later, GPT proposed the core idea underlying the proof developed in the present paper. Through many subsequent rounds of interaction, the arguments leading to the main theorems of the current manuscript gradually took shape.

We used several separate GPT sessions to conduct adversarial checks of the key calculations, limiting arguments, equality cases, and possible logical gaps. These checks do not substitute for independent human verification. The authors have not yet completed an independent  verification of the manuscript, and we therefore present this work as a proposed proof requiring further careful scrutiny. We will continue to examine the manuscript line by line and continue to refine both the mathematical arguments and the exposition.

We do not intend to submit this manuscript to a traditional academic journal. We hope to make this work openly and fully available to the mathematical community, both in the hope that it may provide useful material for future work in the area and to commemorate the problems that accompanied us throughout much of our doctoral research.

Finally, we are deeply grateful to K\'aroly J. B\"or\"oczky, Erwin Lutwak, Deane Yang, and Gaoyong Zhang for formulating the logarithmic Brunn–Minkowski conjecture for origin-symmetric convex bodies, and to Martin Henk for proposing its extension to convex bodies with centroid at the origin. These beautiful and challenging problems have inspired our research for many years.

\end{document}